\newcommand{\ii }{{\rm i} }
\newcolumntype{C}[1]{>{\centering\arraybackslash}b{#1}}
\newcolumntype{R}[1]{>{\raggedleft\arraybackslash}b{#1}}
\newcolumntype{L}[1]{>{\raggedright\arraybackslash}b{#1}}
\newcolumntype{M}[1]{>{\centering}m{#1}}
\newtheorem{theo}{Theorem}[section]
\newtheorem{defin}{Definition}[section]
\newtheorem{lem}{Lemma}[section]
\newtheorem{prop}{Proposition}[section]
\newtheorem{remark}{Remark}[section]
\numberwithin{equation}{section}
\date{}
\title{Vortex rigid motion in quasi-geostrophic shallow-water equations}
\author{Emeric Roulley\thanks{Univ Rennes, CNRS, IRMAR – UMR 6625, F-35000 Rennes, France\\
E-mail address : emeric.roulley@univ-rennes1.fr}}
\begin{document}
\maketitle
\begin{abstract}
In this paper, we prove the existence of relative equilibria with holes for quasi-geostrophic shallow-water equations. More precisely, using bifurcation techniques,  we establish for any $\mathbf{m}$ large enough  the existence of two branches of $\mathbf{m}$-fold doubly-connected V-states bifurcating from any annulus of arbitrary size.
\end{abstract}
\tableofcontents
\section{QGSW equations and main result}
\noindent In this work, we are concerned with the quasi-geostrophic shallow-water equations with a  parameter $\lambda\geqslant 0$, which is a two dimensional active scalar equation   taking the form 
\begin{equation}\label{QGSW equations}
	(\textnormal{QGSW})_{\lambda}\left\lbrace\begin{array}{ll}
		\partial_{t}\boldsymbol{q}+\mathbf{v}\cdot\nabla\boldsymbol{q}=0, &\quad  (t,x)\in\mathbb{R}_{+}\times\mathbb{R}^{2}\\
		\mathbf{v}=\nabla^{\perp}(\Delta-\lambda^{2})^{-1}\boldsymbol{q}, & \\
		\boldsymbol{q}(0,\cdot)=\boldsymbol{q}_{0},&
	\end{array}\right.\quad \mbox{ where }\quad \nabla^{\perp}=\left(\begin{array}{c}
		-\partial_{2}\\
		\partial_{1}
	\end{array}\right).
\end{equation}
 The involved quantities are  the divergence-free velocity field $\mathbf{v}$ and  the potential vorticity $\boldsymbol{q}$ which is a scalar function. The parameter $\lambda$ stands for the inverse Rossby radius defined in the  literature by
$$\lambda=\frac{\omega_{c}}{\sqrt{gH}},$$
where $g$ is the gravity constant, $H$ is the mean active layer depth and $\omega_{c}$ is the Coriolis frequency, assumed to be  constant. Notice that the case $\lambda=0$ corresponds to the velocity-vorticity formulation of Euler equations.
The system \eqref{QGSW equations} is commonly used to track the dynamics  of the atmospheric and oceanic circulation at large scale motion. For  a general review about the asymptotic derivation of the these equations from the rotating shallow water equations  we refer for instance to \cite[p. 220]{V17}.

The main purpose of this paper is to explore the emergence of  time periodic solutions  in the patch form  close to the annulus of radii $1$ and $b$ for the system $(\textnormal{QGSW})_{\lambda}$ with fixed $\lambda>0$ and $b\in(0,1).$  
Recall that  a vortex patch means  a  solution of \eqref{QGSW equations} with initial condition being the characteristic
function of a  bounded domain $D_0\subset\mathbb{R}^2$, that is $\boldsymbol{q}_{0}=\mathbf{1}_{D_{0}}.$
 Actually, this structure is conserved in time due to  the transport structure  of \eqref{QGSW equations}, and one gets 
 $$\boldsymbol{q}(t,\cdot)=\mathbf{1}_{D_{t}}\quad\mbox{ where }\quad D_{t}:=\mathbf{\Phi}_{t}(D_{0}),$$
with $\mathbf{\Phi}_{t}:\mathbb{R}^{2}\rightarrow\mathbb{R}^{2}$ is the flow map associated to $\mathbf{v}$, defined through the ODE 
\begin{equation}\label{definition of the flow map}
\partial_{t}\mathbf{\Phi}_{t}(z)=\mathbf{v}(t,\mathbf{\Phi}_{t}(z))\quad\mbox{ and }\quad\mathbf{\Phi}_{0}=\textnormal{Id}_{\mathbb{R}^{2}}.
\end{equation}
In this framework, of bounded  datum with compact support, existence and uniqueness follow in a standard way from  Yudovich approach implemented  for Euler equations and which can be adapted here in a similar way.   
 These structures can be considered as a toy model to simulate  hurricanes motion in the context of geophysical flows. In the smooth boundary case, their dynamics is completely  described by the evolution of the interfaces surrounding the patch according to the contour dynamics equation given by  
\begin{equation}\label{Lagran-Fo}
	\left[\partial_{t}\gamma(t,\theta)-\mathbf{v}(t,\gamma(t,\theta))\right]\cdot\mathbf{n}(t,\gamma(t,\theta))=0,
\end{equation}
where $\gamma(t,\cdot):\mathbb{T}\rightarrow\partial D_{t}$ is a $C^1$ parametrization of the boundary of the patch and $\mathbf{n}(t,\cdot)$ is an  outward normal vector to the boundary. We may refer to \cite{HMV13,HMV15}  for a detailed derivation of  this equation for active scalar models. We are particularly interested in the existence of ordered structures moving without shape deformation, called \textit{V-states}. More precisely, we shall focus on the existence of  uniformly rotating vortex patches about their center of mass, that can be fixed at the origin,  and with a constant angular velocity $\Omega\in\mathbb{R}$, namely
\begin{equation}\label{v-states}
\boldsymbol{q}(t,\cdot)=\mathbf{1}_{D_t}\quad \mbox{ with }\quad D_{t}=e^{\ii t\Omega}D_{0}.
\end{equation}
In the present work  we explore the case of doubly-connected V-states with $\mathbf{m}$-fold symmetry. To fix the terminology, a bounded open domain $D_0$ is said doubly-connected if 
$$D_{0}=D_{1}\backslash \overline{D_{2}},$$
where $D_{1}$ and ${D_{2}}$ are two bounded open  simply-connected domains with $\overline{D_{2}}\subset D_{1}. $ This means that the boundary of $D_{0}$ is given by two interfaces, one of them is contained in the  open region delimited by the second one. 
According to the structure of $(QGSW)_{\lambda}$,  every radial initial domain $D_0$ generates a trivial stationary solution, and therefore a  V-state rotating with any angular velocity. Basic examples are given by the discs in the simply-connected case or the annuli in the doubly-connected case. 
The first non-trivial examples of uniformly rotating solutions for Euler equations are Kirchhoff ellipses which rotate with the angular velocity $\Omega=\frac{ab}{(a+b)^{2}}$ where $a$ and $b$ are the semi-axes of the ellipse (see \cite{K74} and \cite[p. 304]{BM02}). In \cite{DZ78} Deem and Zabusky established numerically  the existence of simply-connected rotating patches  with $\mathbf{m}$-fold symmetry for $\mathbf{m}>2.$ An analytical proof based on  bifurcation theory and complex analysis tools was performed by Burbea  in \cite{B82} showing the existence of $\mathbf{m}$-fold (for any $\mathbf{m}\in\mathbb{N}^{*}$) symmetric V-states bifurcating from Rankine vortices with angular velocity $\Omega_{\mathbf{m}}:=\frac{\mathbf{m}-1}{2\mathbf{m}}$.
In the spirit of Burbea's work, a lot of results on $\mathbf{m}$-fold V-states have been obtained both for simply and doubly-connected cases for Euler, $(SQG)_{\alpha}$ and $(QGSW)_{\lambda}$ equations in the past decade. We may refer to \cite{CCG16,DHR19,GPSY20,HH15,HHH18,HHHM15,HHMV16,R17}. From this long list, we shall make some comments  on two contributions from \cite{DHR19,HHMV16} related to the current work. In \cite[Thm. B]{HHMV16}, the authors proved for Euler equations that under the condition
$$1+b^{\mathbf{m}}-\frac{\mathbf{m}(1-b^{2})}{2}<0,$$
one can find two branches of  $\mathbf{m}$-fold doubly-connected V-states bifurcating from the normalized  annulus $A_{b}$, defined by
\begin{equation}\label{definition of the annulus Ab}
	A_{b}:=\big\{ z\in\mathbb{C}\quad\textnormal{s.t.}\quad b<|z|<1\big\}\quad\mbox{ for }\quad b\in(0,1).
\end{equation}
at  the following angular velocities 
\begin{equation}\label{def eig Euler}
	\Omega_{\mathbf{m}}^{\pm}(b)=\frac{1-b^{2}}{4}\pm\frac{1}{2\mathbf{m}}\sqrt{\left(\frac{\mathbf{m}(1-b^{2})}{2}-1\right)^{2}-b^{2\mathbf{m}}}.
\end{equation}
Burbea's result has been extended for  $(QGSW)_{\lambda}$ in \cite[Thm. 5.1]{DHR19}, where it is shown the  existence of branches of $\mathbf{m}$-fold symmetric V-states ($\mathbf{m}\geq 2$) bifurcating from Rankine vortex ${\bf{1}}_{\mathbb{D}}$, with $\mathbb{D}$ being the unit disc, at the angular velocitity
\begin{equation}\label{def eig sc QGSW}
	\Omega_{\mathbf{m}}(\lambda)=I_{1}(\lambda)K_{1}(\lambda)-I_{\mathbf{m}}(\lambda)K_{\mathbf{m}}(\lambda)
\end{equation}
where $I_m$ and $K_m$ are the modified Bessel functions of first and second kind, respectively. We may refer to the \mbox{Appendix \ref{appendix Bessel}} for the  definitions and some basic  properties of these functions. We also notice that more analytical and numerical experiments were carefully explored in \cite{DHR19,DJ20} dealing in particular with the imperfect bifurcation and the response of the bifurcation diagram with respect to the parameter $\lambda.$

We emphasize that different studies around this subject have been recently investigated  by several authors, we refer for instance to   \cite{G20,G21,G19,HH21,HW21,HM17} and the references therein.\\ 
The main contribution of this paper is to establish for $(QGSW)_{\lambda}$ the existence of branches of bifurcation in the doubly-connected case, generalizing the result of \cite{HHMV16}. More precisely, we prove the following result.

\begin{theo}\label{main theorem}
Let $\lambda>0$ and $b\in(0,1)$. There exists $N(\lambda,b)\in\mathbb{N}^{*}$ such that for every $\mathbf{m}\in\mathbb{N}^{*},$ with $\mathbf{m}\geqslant N(\lambda,b)$,  there exist two curves of $\mathbf{m}$-fold doubly-connected V-states  bifurcating from the annulus $A_{b}$ defined in \eqref{definition of the annulus Ab},
at the angular velocities
\begin{align*}\Omega_{\mathbf{m}}^{\pm}(\lambda,b)&=\frac{1-b^{2}}{2b}\Lambda_{1}(\lambda,b)+\frac{1}{2}\Big(\Omega_{\mathbf{m}}(\lambda)-\Omega_{\mathbf{m}}(\lambda b)\Big)\\
&\quad\pm\frac{1}{2b}\sqrt{\Big(b\big[\Omega_{\mathbf{m}}(\lambda)+\Omega_{\mathbf{m}}(\lambda b)\big]-(1+b^{2})\Lambda_{1}(\lambda,b)\Big)^{2}-4b^{2}\Lambda_{\mathbf{m}}^{2}(\lambda,b)},
\end{align*}
where $\Omega_{\mathbf{m}}(\lambda)$ is defined in \eqref{def eig sc QGSW} and    
$$\Lambda_{m}(\lambda,b):=I_{m}(\lambda b)K_{m}(\lambda)$$
with $I_m$ and $K_m$ being  the modified Bessel functions of first and second kind. In addition, the boundary of each V-state is of class $C^{1+\alpha},$ for any  $\alpha\in(0,1).$
\end{theo}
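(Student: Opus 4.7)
The plan is to recast the V-state condition \eqref{v-states} as a nonlinear functional equation $F(\Omega,f_{1},f_{2})=0$ between Hölder Banach spaces of perturbations of the annulus and apply the Crandall--Rabinowitz bifurcation theorem with $\Omega$ as bifurcation parameter, mirroring the strategy of \cite{HHMV16} for Euler with the Biot--Savart kernel replaced by the Helmholtz Green function $\tfrac{1}{2\pi}K_{0}(\lambda|\cdot|)$ as in \cite{DHR19}. The stream function generated by $\mathbf{1}_{D}$ is $\psi=-\tfrac{1}{2\pi}\int_{D}K_{0}(\lambda|z-z'|)\,dA(z')$; writing each interface $\partial D_{j}$ ($j=1,2$, unperturbed radii $b_{1}=1$ and $b_{2}=b$) as $z_{j}(\theta)=r_{j}(\theta)e^{\ii \theta}$ with $r_{j}^{2}(\theta)=b_{j}^{2}+2f_{j}(\theta)$, inserting into \eqref{Lagran-Fo} under the rotation ansatz, and multiplying by a suitable geometric factor, one obtains a coupled system $F(\Omega,f_{1},f_{2})=0$ with trivial branch $F(\Omega,0,0)\equiv 0$ corresponding to $A_{b}$.

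\textbf{Linearization and dispersion relation.} On the subspace of $\mathbf{m}$-fold symmetric perturbations the Fourier expansion of $f_{j}$ only involves modes $k\mathbf{m}$, $k\geq 1$. Rotational invariance of $(\textnormal{QGSW})_{\lambda}$ makes $\partial_{(f_{1},f_{2})}F(\Omega,0,0)$ diagonal in this basis: each mode $m$ contributes a $2\times 2$ block $M_{m}(\Omega,\lambda,b)$ whose off-diagonal entries involve the crossed quantity $\Lambda_{m}(\lambda,b)=I_{m}(\lambda b)K_{m}(\lambda)$ produced by Graf's addition formula for $K_{0}$ (Appendix \ref{appendix Bessel}), while the diagonal entries combine the simply-connected symbols $\Omega_{m}(\lambda)$ and $\Omega_{m}(\lambda b)$ from \eqref{def eig sc QGSW}. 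The equation $\det M_{\mathbf{m}}(\Omega,\lambda,b)=0$ is a quadratic in $\Omega$ whose two roots are exactly the $\Omega_{\mathbf{m}}^{\pm}(\lambda,b)$ of the statement.

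\textbf{Crandall--Rabinowitz hypotheses.} I would work in $C^{1+\alpha}$ Hölder spaces of $\mathbf{m}$-fold symmetric mean-zero functions and verify: (a) $F$ is of class $C^{1}$ near the trivial branch, from potential-theoretic estimates on the $K_{0}$-kernel (the logarithmic singularity is treated exactly as in \cite{HHMV16}, the smooth remainder being harmless); (b) $\partial_{(f_{1},f_{2})}F$ is Fredholm of index zero, via extraction of a dominant diagonal multiplication operator plus a compact perturbation; (c) $\ker\partial_{(f_{1},f_{2})}F(\Omega_{\mathbf{m}}^{\pm},0,0)$ is one-dimensional; (d) the transversality condition, which reduces to $\partial_{\Omega}\det M_{\mathbf{m}}\neq 0$ at $\Omega_{\mathbf{m}}^{\pm}$ and holds automatically as soon as the two roots are distinct, i.e.\ the discriminant is strictly positive.

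\textbf{Main obstacle.} The delicate point is (c), and it is precisely what dictates the threshold $N(\lambda,b)$. Two uniform requirements must be established: first, the discriminant inside the square root in the formula for $\Omega_{\mathbf{m}}^{\pm}(\lambda,b)$ is strictly positive (so that $\Omega_{\mathbf{m}}^{+}\neq \Omega_{\mathbf{m}}^{-}$); second, for every secondary admissible mode $m'=k\mathbf{m}$ with $k\geq 2$ neither root of $\det M_{m'}(\cdot,\lambda,b)=0$ may coincide with $\Omega_{\mathbf{m}}^{\pm}(\lambda,b)$. Both conditions demand sharp uniform-in-order asymptotics of the Bessel functions $I_{m}(x)$, $K_{m}(x)$ and of the products $I_{m}K_{m}$ (Debye-type bounds), ensuring in particular that $\Lambda_{m}(\lambda,b)$ decays like $b^{m}/(2m)$ while the diagonal symbols stay well separated. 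Once these estimates are in place, Crandall--Rabinowitz applies to each root and yields two $C^{1+\alpha}$ branches of $\mathbf{m}$-fold doubly-connected V-states emerging from the annulus $A_{b}$.
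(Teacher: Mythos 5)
Your overall strategy coincides with the paper's: recast the patch equation as a nonlinear functional equation $G(\lambda,b,\Omega,f_{1},f_{2})=0$ with the annulus as trivial branch, show that the linearization at the annulus is a Fourier multiplier by $2\times 2$ matrices $M_{n}$ combining $\Omega_{n}(\lambda)$, $\Omega_{n}(\lambda b)$ and $\Lambda_{n}(\lambda,b)$ through the Beltrami/Graf expansion of $K_{0}$, and apply Crandall--Rabinowitz at the two roots of $\det M_{\mathbf{m}}=0$. The setup differences are cosmetic (the paper uses exterior conformal parametrizations $\Phi_{j}(z)=b_{j}z+\sum_{n}a_{n}z^{-n}$ rather than your polar ansatz; both are standard), and your remark that transversality reduces to $\partial_{\Omega}\det M_{\mathbf{m}}\neq 0$, hence follows from $\Delta_{\mathbf{m}}(\lambda,b)>0$, is correct and is in fact a cleaner reading of what Proposition \ref{proposition assumptions of CR theorem} (iv) verifies by direct computation.

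The genuine gap is in the one-dimensionality of the kernel. You correctly state the required condition --- no root of $\det M_{k\mathbf{m}}(\cdot,\lambda,b)=0$ with $k\geqslant 2$ may equal $\Omega_{\mathbf{m}}^{\pm}(\lambda,b)$ --- but the justification you offer (``$\Lambda_{m}$ decays like $b^{m}/(2m)$ while the diagonal symbols stay well separated'') does not deliver it: decay and convergence of the matrix entries are perfectly compatible with an accidental coincidence $\det M_{k\mathbf{m}}\big(\Omega_{\mathbf{m}}^{\pm}\big)=0$ for some $k\geqslant 2$. The paper's actual mechanism (Proposition \ref{proposition monotonicity of the eigenvalues}) is the strict asymptotic monotonicity of the sequences $n\mapsto\Omega_{n}^{+}$ (increasing) and $n\mapsto\Omega_{n}^{-}$ (decreasing) for $n\geqslant N(\lambda,b)$, which yields the interlacing $\Omega_{m}^{-}<\Omega_{n}^{-}<\Omega_{n}^{+}<\Omega_{m}^{+}$ for $N(\lambda,b)\leqslant n<m$ and hence \eqref{non zero determinant}. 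Establishing this monotonicity is the delicate step that your Debye-type leading-order bounds do not reach: the non-square-root contribution to $\Omega_{n+1}^{\pm}-\Omega_{n}^{\pm}$ is only $O_{\lambda,b}(n^{-4})$ thanks to a cancellation between the increments of $I_{n}K_{n}$ at arguments $\lambda$ and $\lambda b$, whereas $\sqrt{\Delta_{n+1}}-\sqrt{\Delta_{n}}\sim b/n^{2}$; seeing this requires the expansion \eqref{asymptotic expansion of high order for the product InKn} of $I_{n}K_{n}$ to order $n^{-3}$. Two further points you leave implicit: the strict positivity of the limiting discriminant $\Delta_{\infty}(\lambda,b)=\delta_{\infty}^{2}(\lambda,b)$ in \eqref{definition of deltainfty}, which the paper derives from $I_{1}(\lambda b)<bI_{1}(\lambda)$ and the strict decay of $K_{1}$; and the fact that your proposed ``diagonal isomorphism plus compact perturbation'' route to the Fredholm property also hinges on the non-vanishing of $\det M_{k\mathbf{m}}\big(\Omega_{\mathbf{m}}^{\pm}\big)$ for $k\geqslant 2$, i.e.\ on the same missing monotonicity lemma.
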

Before sketching the proof some remarks are in order.
\begin{remark}
	\begin{enumerate}[label=(\roman*)]
		\item The spectrum is continuous with respect  to $\lambda$ and  $b$. In particular,  when we shrink $\lambda\to 0$ we find the spectrum of Euler equations detailed in \eqref{def eig Euler}. However, when we shrink $b\to 0$ we obtain in part the simply connected spectrum \eqref{def eig sc QGSW} . In other words, 
$$\left\lbrace\begin{array}{l}
	\Omega_{\mathbf{m}}^{\pm}(\lambda,b)\underset{\lambda\rightarrow 0}{\longrightarrow}\Omega_{\mathbf{m}}^{\pm}(b)\\
	\Omega_{\mathbf{m}}^{+}(\lambda,b)\underset{b\rightarrow 0}{\longrightarrow}\Omega_{\mathbf{m}}(\lambda).
\end{array}\right.$$
These asymptotics are obtained for sufficiently large values of $\mathbf{m}.$ For more details see Lemma \ref{lem continuity spectrum}.
		\item  The regularity of the boundary, which  is $C^{1+\alpha}$, is far from being  optimal. We expect it to be analytic and one may use  the approach developed in \cite{CCG16} and successfully implemented in \cite{R17} for the generalized quasi-geostrophic equations in the doubly-connected case.
			\end{enumerate}
\end{remark} 

Now, we intend to  discuss the key steps of the proof of Theorem \ref{main theorem}. The following notation will be used throughout the paper.
\begin{enumerate}[label=\textbullet]
	\item We denote by $\mathbb{D}$ the unit disc. Its boundary, the unit circle, is denoted by $\mathbb{T}$.
	\item Let $f:\mathbb{T}\rightarrow\mathbb{C}$ be a continuous function. We define its mean value by
	$$\fint_{\mathbb{T}}f(\tau)d\tau:=\frac{1}{2\ii\pi}\int_{\mathbb{T}}f(\tau)d\tau:=\frac{1}{2\pi}\int_{0}^{2\pi}f\left(e^{\ii\theta}\right)e^{\ii\theta}d\theta,$$
	where $d\tau$ stands for the complex integration.
\end{enumerate}
First, in Section \ref{sec funct setting}, we reformulate the vortex patch equation by using conformal maps. Indeed, consider an initial doubly-connected domain $D_0=D_{1}\setminus \overline{D_{2}}$, with $D_{1}$ and $D_{2}$ are two simply-connected domains close to the discs of radii $1$ and $b$ respectively. We introduce for $j\in\{1,2\}$ the conformal mappings  $\Phi_j:\mathbb{D}^c\rightarrow D_j^c$ taking the form
$$\Phi_{1}(z)=z+f_1(z)=z+\sum_{n=0}^{\infty}\frac{a_n}{z^n},\quad\Phi_{2}(z)=bz+f_2(z)=bz+\sum_{n=0}^{\infty}\frac{b_n}{z^n}.$$ 
Thus, from the contour dynamics equation, rotating doubly-connected V-states amounts to finding non-trivial zeros of the non-linear functional $G=(G_1,G_2)$, defined for $j\in\{1,2\}$ and $w\in\mathbb{T}$ by
$$G_{j}(\lambda,b,\Omega,f_{1},f_{2})(w):=\mbox{Im}\left\lbrace\Big(\Omega\Phi_{j}(w)+S(\lambda,\Phi_{2},\Phi_{j})(w)-S(\lambda,\Phi_{1},\Phi_{j})(w)\Big)\overline{w}\overline{\Phi_{j}'(w)}\right\rbrace,$$
with
$$\forall w\in\mathbb{T},\quad S(\lambda,\Phi_{i},\Phi_{j})(w):=\fint_{\mathbb{T}}\Phi_{i}'(\tau)K_{0}\left(\lambda|\Phi_{j}(w)-\Phi_{i}(\tau)|\right)d\tau.$$
For this aim, we shall implement   Crandall-Rabinowitz's Theorem, starting from the elementary observation  that the annulus $A_b$ defined by \eqref{definition of the annulus Ab} generates a trivial line of solutions for any $\Omega\in\mathbb{R}$, which will play the role of the bifurcation parameter.  In the same section together with the Appendix \ref{appendix proof reg funct}, we also study the regularity of $G$ and prove that it is of class $C^1$ with respect to the functional spaces introduced in Section \ref{subsec func spaces}. Then, in Section \ref{sec spec std}, we compute the linearized operator at the equilibrium state and prove that it is a Fourier matrix multiplier. More precisely, for 
$$\forall w\in\mathbb{T},\quad h_{1}(w)=\sum_{n=0}^{\infty}a_{n}\overline{w}^{n}\quad \mbox{ and }\quad h_{2}(w)=\sum_{n=0}^{\infty}b_{n}\overline{w}^{n},$$
we have 
$$DG(\lambda,b,\Omega,0,0)[h_1,h_2](w)=\sum_{n=0}^{\infty}(n+1)M_{n+1}(\lambda,b,\Omega)\left(\begin{array}{c}
	a_n\\
	b_n
\end{array}\right)\textnormal{Im}(w^{n+1}),$$
where
$$M_{n}(\lambda,b,\Omega):=\left(\begin{array}{cc}
	\Omega_{n}(\lambda)-\Omega-b\Lambda_{1}(\lambda,b) & b\Lambda_{n}(\lambda,b)\\
	-\Lambda_{n}(\lambda,b) & \Lambda_{1}(\lambda,b)-b\big[\Omega_{n}(\lambda b)+\Omega\big]
\end{array}\right).$$
We refer to Proposition \ref{proposition linearized operator} for more details and point out that some difficulties appear there when computing some integrals related to Bessel functions. Then, the  kernel for the linearized operator $DG(\lambda,b,\Omega,0,0)$ is non trivial  for $\Omega=\Omega_{\mathbf{m}}^{\pm}(\lambda,b)$, as defined in Theorem \ref{main theorem}, with  $\mathbf{m}$ large enough. The restriction to higher  symmetry $\mathbf{m}\geqslant N(\lambda,b)$  is needed first  to ensure  the condition
$$\Delta_{\mathbf{m}}(\lambda,b):=\Big(b\big[\Omega_{\mathbf{m}}(\lambda)+\Omega_{\mathbf{m}}(\lambda b)\big]-(1+b^{2})\Lambda_{1}(\lambda,b)\Big)^{2}-4b^{2}\Lambda_{\mathbf{m}}^{2}(\lambda,b)>0,$$
required  in the transversality condition of Crandall-Rabinowitz's Theorem
and second  to get   the monotonicity of the sequences $\left(\Omega_{n}^{\pm}(\lambda,b)\right)_{n\geqslant N(\lambda,b)}$ (to get a one-dimensional kernel), obtained from tricky  asymptotic analysis  on the  modified Bessel functions. For more details,  we refer to Proposition \ref{proposition assumptions of CR theorem}.
We point out that the  degenerate case  corresponding to $\Delta_{\mathbf{m}}(\lambda,b)=0$ where the transversality is no longer  true was studied in \cite{HM16-2} for Euler equations ($\lambda=0$). It requires to expand the functional at higher order in order to understand the local structure of the bifurcation diagram.  In our case, the dependance of $\Delta_{\mathbf{m}}(\lambda,b)$ with respect to the parameter $b$ is more involved and similar approach may be implemented with a high computational  cost.

\section{Functional settings}\label{sec funct setting}
In this section, we shall reformulate the problem of finding V-states looking at the zeros of a nonlinear functional $G$. We also introduce the function spaces used in the analysis and study some regularity aspects for the functional $G$ with respect to these functions spaces.
\subsection{Boundary equations}
In this subsection we shall obtain the system governing the patch motion. The starting point is the vortex patch equation \eqref{Lagran-Fo}, which writes using the complex notation
\begin{equation}\label{vortex patch equation in the complex sens}
	\mbox{Im}\left\lbrace\big[\partial_{t}\gamma(t,s)-\mathbf{v}(t,\gamma(t,s))\big]\overline{\partial_{s}\gamma(t,s)}\right\rbrace=0,
\end{equation}
where $s\mapsto\gamma(t,s)$ is a parametrization of the boundary of $D_{t}.$ Assuming that the patch is uniformly rotating with an angular velocity $\Omega$, we can choose a parametrization $\gamma$ in the form
\begin{equation}\label{parametrization rotating boundary}
	\gamma(t,s)=e^{\ii\Omega t}\gamma(0,s).
\end{equation}
One readily has 
\begin{equation}\label{simplification vortex patch equation 1}
	\mbox{Im}\left\lbrace\partial_{t}\gamma(t,s)\overline{\partial_{s}\gamma(t,s)}\right\rbrace=\Omega\mbox{Re}\left\lbrace\gamma(0,s)\overline{\partial_{s}\gamma(0,s)}\right\rbrace.
\end{equation}
Now, to study the second term in the equation \eqref{vortex patch equation in the complex sens}, one needs an explicit formulation of the velocity field $\mathbf{v}.$ It has been proved in \cite{DHR19,HR21} that the velocity field associated to $(QGSW)_{\lambda}$ equations writes in the context of vortex patches as an integral on the boundary, namely
\begin{equation}\label{velocity}
	\mathbf{v}(t,z)=\frac{1}{2\pi}\int_{\partial D_{t}}K_{0}(\lambda|z-\xi|)d\xi,
\end{equation}
where the domain $D_t$ is oriented with the convention "matter on the left" due to Stokes' Theorem  and where $K_0$ is the modified Bessel function of second kind. We shall refer to Appendix \ref{appendix Bessel} for the definitions and properties of modified Bessel functions.
By using \eqref{parametrization rotating boundary}, we obtain 
\begin{align*}
	\mathbf{v}(t,\gamma(t,s)) & =  \displaystyle\frac{1}{2\pi}\int_{\partial D_{t}}K_{0}\left(\lambda|\gamma(t,s)-\xi|\right)d\xi\\
	& =  \displaystyle\frac{1}{2\pi}\int_{0}^{1}K_{0}\left(\lambda|e^{\ii\Omega t}\gamma(0,s)-e^{\ii\Omega t}\gamma(0,s')|\right)\partial_{s'}\gamma(t,s')ds'\\
	& =  \displaystyle\frac{e^{\ii\Omega t}}{2\pi}\int_{0}^{1}K_{0}\left(\lambda|\gamma(0,s)-\gamma(0,s')|\right)\partial_{s'}\gamma(0,s')ds'\\
	& = \displaystyle\frac{e^{\ii\Omega t}}{2\pi}\int_{\partial D_{0}}K_{0}\left(\lambda|\gamma(0,s)-\xi|\right)d\xi\\
	& =  e^{\ii\Omega t}\mathbf{v}(0,\gamma(0,s)).
\end{align*}
Consequently using again \eqref{parametrization rotating boundary}, we get
\begin{equation}\label{simplification vortex patch equation 2}
	\mbox{Im}\left\lbrace \mathbf{v}(t,\gamma(t,s))\overline{\partial_{s}\gamma(t,s)}\right\rbrace=\mbox{Im}\left\lbrace \mathbf{v}(0,\gamma(0,s))\overline{\partial_{s}\gamma(0,s)}\right\rbrace.
\end{equation}
Putting together \eqref{simplification vortex patch equation 1} and \eqref{simplification vortex patch equation 2}, the equation \eqref{vortex patch equation in the complex sens} can be rewritten
\begin{equation}\label{fin vpe}
	\Omega\mbox{Re}\left\lbrace\gamma(0,s)\overline{\partial_{s}\gamma(0,s)}\right\rbrace=\mbox{Im}\left\lbrace \mathbf{v}(0,\gamma(0,s))\overline{\partial_{s}\gamma(0,s)}\right\rbrace.
\end{equation}
Let us assume that our starting domain $D_{0}$ is doubly-connected, that is
$$D_{0}=D_{1}\backslash \overline{D_{2}}\quad\textnormal{with}\quad \overline{D}_{2}\subset D_{1},$$
where $D_{1}$ and $D_{2}$ are simply-connected bounded open domains of $\mathbb{C}.$ Then combining \eqref{velocity} and \eqref{fin vpe}, one should obtain for all $z\in\partial D_{0}=\partial D_{1}\cup\partial D_{2},$
\begin{align}\label{boundary equation for doubly-connected QGSW}
	\Omega\mbox{Re}\left\lbrace z\overline{z'}\right\rbrace & =  \displaystyle\mbox{Im}\left\lbrace\frac{1}{2\pi}\int_{\partial D_{0}}K_{0}\left(\lambda|z-\xi|\right)d\xi\overline{z'}\right\rbrace\nonumber\\
	& =  \displaystyle\mbox{Im}\left\lbrace\left(\frac{1}{2\pi}\int_{\partial D_{1}}K_{0}\left(\lambda|z-\xi|\right)d\xi-\frac{1}{2\pi}\int_{\partial D_{2}}K_{0}\left(\lambda|z-\xi|\right)d\xi\right)\overline{z'}\right\rbrace,
\end{align}
where $z'$ denotes a tangent vector to the boundary $\partial D_{0}$ at the point $z.$ The minus sign in front of the integral on $\partial D_{2}$ is here because of the orientation convention for the application of Stokes' Theorem. Following the works initiated by Burbea, see for instance \cite{B82,DHR19,HMV13,HMV15}, we should give the equation(s) to solve by using conformal mappings. For this purpose, we shall recall Riemann mapping Theorem. 
\begin{theo}[Riemann Mapping]
	Let $\mathbb{D}$ denote the unit open ball and $D_{0}\subset\mathbb{C}$ be a simply
		connected bounded domain. Then there exists a unique bi-holomorphic map called also conformal
		map, $\Phi:\mathbb{C}\backslash\overline{\mathbb{D}}\rightarrow\mathbb{C}\backslash\overline{D_{0}}$ taking the form
		$$\Phi(z)=az+\sum_{n=0}^{\infty}\frac{a_{n}}{z^{n}},$$
		with $a>0$ and $(a_{n})_{n\in\mathbb{N}}\in\mathbb{C}^{\mathbb{N}}.$
\end{theo}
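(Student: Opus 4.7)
The plan is to reduce the statement to the classical interior Riemann Mapping Theorem via inversion on the Riemann sphere, and then read off the Laurent expansion at infinity. Working on $\mathbb{C}_\infty := \mathbb{C} \cup \{\infty\}$, both $U_1 := \mathbb{C}_\infty \setminus \overline{\mathbb{D}}$ and $U_2 := \mathbb{C}_\infty \setminus \overline{D_0}$ are simply connected domains containing $\infty$; for $U_2$ this uses the standard topological fact that the complement on the sphere of a simply connected compact set is simply connected.

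First I would pick $z_0 \in D_0$ and introduce the Möbius maps $\tau(z) = 1/z$ and $\sigma(z) = 1/(z-z_0)$. Then $\tau$ sends $U_1$ biholomorphically onto $\mathbb{D}$ with $\tau(\infty) = 0$, while $\sigma$ sends $U_2$ biholomorphically onto a bounded, simply connected open set $V \subset \mathbb{C}$ containing $0$, with $\sigma(\infty) = 0$. Applying the classical Riemann Mapping Theorem to $V$ yields a biholomorphism $F : \mathbb{D} \to V$ with $F(0) = 0$ and $F'(0) \neq 0$ (the argument of $F'(0)$ being a free parameter to be fixed below). Setting $\Phi := \sigma^{-1} \circ F \circ \tau$, I obtain a biholomorphism $U_1 \to U_2$ with $\Phi(\infty) = \infty$. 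Since $F$ has a simple zero at $0$ and $\sigma^{-1}$ has a simple pole at $0$, the map $\Phi$ has a simple pole at $\infty$, so on $\{|z|>1\}$ it admits a Laurent expansion $\Phi(z) = cz + \sum_{n \geqslant 0} a_n z^{-n}$ with $c \neq 0$. To obtain the prescribed form with $a > 0$, I adjust the argument of $F'(0)$, or equivalently replace $\Phi(z)$ by $\Phi(e^{i\theta}z)$ for a suitable $\theta \in \mathbb{R}$, which rotates the leading coefficient onto the positive real axis while preserving the domain $\{|z|>1\}$ and the form of the expansion.

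For uniqueness, suppose $\Phi_1,\Phi_2$ are two such maps with positive leading coefficients $a^{(1)},a^{(2)}$. Then $\Psi := \Phi_2^{-1} \circ \Phi_1$ is a conformal self-map of $U_1$ fixing $\infty$; conjugating by $\tau$ turns it into a conformal automorphism of $\mathbb{D}$ fixing $0$, hence a rotation, so $\Psi(z) = e^{i\theta} z$ for some $\theta \in \mathbb{R}$. Comparing the leading Laurent coefficients in the identity $\Phi_1 = \Phi_2 \circ \Psi$ gives $a^{(1)} = a^{(2)} e^{i\theta}$; positivity of both sides forces $\theta = 0$, and therefore $\Phi_1 = \Phi_2$. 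The main technical input is not in the construction — which is bookkeeping once the classical interior Riemann Mapping Theorem is available — but in the two topological ingredients: the preservation of simple connectedness under the inversion $\sigma$ on the sphere, and the classification of conformal automorphisms of the exterior of $\mathbb{D}$ that fix $\infty$.
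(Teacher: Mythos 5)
The paper offers no proof of this statement: it is quoted as the classical (exterior) Riemann Mapping Theorem and used as a black box, so there is nothing to compare your argument against line by line. On its own terms, your reduction to the interior Riemann Mapping Theorem via the inversions $\tau(z)=1/z$ and $\sigma(z)=1/(z-z_0)$ is the standard derivation and is correct in substance: $V=\sigma(U_2)$ is indeed bounded because $U_2$ avoids a disc around $z_0$, the simple pole of $\Phi=\sigma^{-1}\circ F\circ\tau$ at $\infty$ forces the Laurent expansion $az+\sum_{n\geqslant 0}a_n z^{-n}$ on $\{|z|>1\}$, the rotation trick normalizes $a>0$, and the uniqueness argument via the Schwarz lemma (automorphisms of $\mathbb{D}$ fixing $0$ are rotations, and positivity of both leading coefficients kills the rotation) is exactly right.

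The one point that deserves more care is your topological input for $U_2$. The closure of a bounded simply connected domain need not be simply connected, and its complement need not be connected: take $D_0=\{1<|z|<2\}\setminus[1,2]$, whose closure is the closed annulus, so $\mathbb{C}\setminus\overline{D_0}$ has two components and cannot be a biholomorphic image of the connected set $\mathbb{C}\setminus\overline{\mathbb{D}}$. So the fact you invoke (``the complement on the sphere of a simply connected compact set is simply connected'') is not quite the right one; the correct statement is that every component of the complement in $S^2$ of a compact \emph{connected} set is simply connected, and one should really map onto the unbounded component of $\mathbb{C}\setminus\overline{D_0}$. This is an imprecision already present in the paper's formulation of the theorem (harmless there, since all domains considered are small perturbations of discs), but your proof should either add a hypothesis ensuring $\mathbb{C}\setminus\overline{D_0}$ is connected, or restate the conclusion for the unbounded component.
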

Notice that in the previous theorem, the domain is only assumed to be simply-connected and bounded. In particular, the existence of the conformal mapping does not depend on the regularity of the boundary. However, information on the regularity of the conformal mapping implies some regularity of the boundary. This is given by the following result which can be found in \cite{W35} or in \cite[Thm. 3.6]{P92}.
\begin{theo}[Kellogg-Warschawski]
	We keep the notations of Riemann mapping Theorem. If the conformal map $\Phi:\mathbb{C}\backslash\overline{\mathbb{D}}\rightarrow\mathbb{C}\backslash\overline{D_{0}}$ has a continuous extension to $\mathbb{C}\backslash\mathbb{D}$ which is of class $C^{n+1+\beta}$ with $n\in\mathbb{N}$ and $\beta\in(0,1)$, then the boundary $\Phi(\mathbb{T})$ is a Jordan curve of class $C^{n+1+\beta}.$
\end{theo}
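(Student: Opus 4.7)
The plan is to verify directly that $\Phi(\mathbb{T})$ is a Jordan curve of class $C^{n+1+\beta}$ by exhibiting an explicit parametrization and checking the three defining properties: regularity, injectivity, and non-vanishing of the derivative. As parametrization I would take the boundary trace $\gamma:[0,2\pi]\to\mathbb{C}$, $\gamma(\theta):=\Phi(e^{i\theta})$. Its $C^{n+1+\beta}$ regularity is an immediate consequence of the assumed $C^{n+1+\beta}$ regularity of $\Phi$ up to $\mathbb{T}$ together with the smoothness of $\theta\mapsto e^{i\theta}$, via the chain rule (which preserves integer smoothness plus Hölder exponent). Moreover, the continuous extension gives $\Phi(\mathbb{T})\subseteq\partial D_0$, and the properness of the conformal map $\Phi:\mathbb{D}^c\to D_0^c$ combined with a standard boundary correspondence argument yields the reverse inclusion, hence $\gamma(\mathbb{T})=\partial D_0$.

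Next I would establish that $\gamma$ is injective on $[0,2\pi)$. If $\gamma(\theta_1)=\gamma(\theta_2)$ for distinct $\theta_1,\theta_2$, the image of the closed arc joining $e^{i\theta_1}$ to $e^{i\theta_2}$ on $\mathbb{T}$ would be a loop in $\overline{D_0^c}$ bounding some non-trivial region. Applying the open mapping theorem to $\Phi$ on the open exterior and approaching the arc from outside $\mathbb{T}$, one would find two distinct preimages in $\mathbb{D}^c$ of a single point near that loop, contradicting the univalence of $\Phi$. This gives that $\partial D_0=\gamma(\mathbb{T})$ is a simple closed curve.

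The delicate step is to show $\gamma'(\theta)=ie^{i\theta}\Phi'(e^{i\theta})\ne 0$ for every $\theta$, i.e., that $\Phi'$ has no zero on $\mathbb{T}$. Here I would combine the univalence of $\Phi$ on the open exterior with classical distortion estimates for univalent functions. Concretely, conformally pulling back a neighborhood of $w_0\in\mathbb{T}$ in $\mathbb{D}^c$ to the unit disc and applying Koebe's $1/4$ theorem and distortion theorem to $\Phi$ gives a lower bound roughly of the form $|\Phi'(z)|\gtrsim \mathrm{dist}(\Phi(z),\partial D_0)/\mathrm{dist}(z,\mathbb{T})$ as $z\to w_0$ non-tangentially. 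Together with the continuity of $\Phi'$ provided by the $C^{1+\beta}$ extension and the fact that $\Phi$ remains a homeomorphism between small neighborhoods of $w_0$ and $\Phi(w_0)$ in the respective closed exteriors, a vanishing $\Phi'(w_0)$ would be incompatible with this Koebe-type lower bound.

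The main obstacle is exactly this third step: the regularity claim is a chain-rule computation, and injectivity is a topological consequence of univalence, but the non-vanishing of $\Phi'$ on $\mathbb{T}$ is the genuine analytic content of Kellogg–Warschawski and requires the fine distortion theory of univalent functions. Once the three items are in place, $\gamma$ is a $C^{n+1+\beta}$ injective parametrization of $\partial D_0$ with nowhere vanishing derivative, which is exactly the statement that $\partial D_0$ is a Jordan curve of class $C^{n+1+\beta}$.
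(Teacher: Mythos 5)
First, a point of comparison: the paper gives no proof of this statement at all --- it is quoted as a classical result with references to Warschawski and to Pommerenke's book (Theorem 3.6 there), so there is no argument of the author's to measure yours against. Judged on its own, your proposal has a genuine gap, and it sits exactly where you locate the ``delicate step.'' The Koebe-type estimate you invoke is two-sided, $\tfrac14\,|\Phi'(z)|\,\mathrm{dist}(z,\mathbb{T})\leqslant \mathrm{dist}\big(\Phi(z),\partial(\mathbb{C}\setminus\overline{D_0})\big)\leqslant 4\,|\Phi'(z)|\,\mathrm{dist}(z,\mathbb{T})$, and it is perfectly compatible with $\Phi'(w_0)=0$: as $z\to w_0$ both $|\Phi'(z)|$ and the ratio of distances degenerate together, so no contradiction arises. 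Indeed the conclusion you are trying to force is false under the stated hypotheses. Take $D_0$ a convex domain with smooth boundary except for one corner of interior angle $\pi/2$ (a smoothed square with one genuine corner). The exterior conformal map behaves like $(z-z_0)^{3/2}$ near the preimage $z_0$ of the corner (exterior angle $3\pi/2$), hence is univalent, extends to $\mathbb{C}\setminus\mathbb{D}$ with $\Phi'$ H\"older continuous of exponent $\tfrac12$, and yet $\Phi'(z_0)=0$ and $\Phi(\mathbb{T})$ has a corner, so it is not a $C^{1+\beta}$ Jordan curve in the usual sense (injective parametrization with nowhere vanishing derivative). The same phenomenon in the interior setting is the elementary univalent map $z\mapsto(1+z)^{3/2}$ on $\mathbb{D}$. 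So the non-vanishing of $\Phi'$ on $\mathbb{T}$ is not a consequence of univalence plus a $C^{1+\beta}$ extension; in the precise formulations of Kellogg--Warschawski it is either an explicit additional hypothesis on the analytic side of the equivalence, or it is a conclusion of the \emph{converse} direction (smooth curve implies smooth map with non-vanishing derivative), which is the genuinely hard half of the theorem.

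Your step 2 also deserves caution: injectivity of the boundary trace of a conformal map does not follow from continuity of the extension alone (the Joukowski map $z+1/z$ extends analytically to $\mathbb{T}$ but is $2$-to-$1$ there), and your loop argument silently assumes the putative loop bounds a non-degenerate region, which is precisely what fails in slit-type degenerations; making this rigorous requires Carath\'eodory/prime-end theory or the non-vanishing of $\Phi'$, i.e., it leans on the step that cannot be carried out. None of this affects the paper itself: the theorem is only applied to maps $\Phi_j$ that are small $C^{1+\alpha}$ perturbations of $z\mapsto z$ or $z\mapsto bz$, for which $\Phi_j'$ stays uniformly away from zero, so the literal statement is harmless in context even though, as written (and as your attempt confirms), it cannot be proved without the extra non-degeneracy assumption.
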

Assuming that $D_{1}$ and $D_{2}$ are respectively small deformations of the discs of radii $1$ and $b$, so that the shape of $D_{0}$ is close to the annulus $A_b$ defined in \eqref{definition of the annulus Ab}, we shall consider the parametrizations by the conformal mapping 
$\Phi_{j}:\mathbb{C}\backslash\overline{\mathbb{D}}\rightarrow\mathbb{C}\backslash\overline{D_{j}}$ satisfying
$$\Phi_{1}(z)=z+f_{1}(z)=z\left(1+\sum_{n=1}^{\infty}\frac{a_{n}}{z^{n}}\right)$$
and
$$\Phi_{2}(z)=bz+f_{2}(z)=z\left(b+\sum_{n=1}^{\infty}\frac{b_{n}}{z^{n}}\right).$$ 
We shall now rewrite the equations by using the conformal parametrizations $\Phi_{1}$ and $\Phi_{2}.$ First remark that for $w\in\mathbb{T},$ a tangent vector on the boundary $\partial D_{j}$ at the point $z=\Phi_{j}(w)$ is given by
$$\overline{z'}=-\ii\overline{w}\overline{\Phi_{j}'(w)}.$$
Inserting this into \eqref{boundary equation for doubly-connected QGSW} and using the change of variables $\xi=\Phi_{j}(\tau)$ gives
$$\forall j\in\{1,2\},\quad\forall w\in\mathbb{T},\quad G_{j}(\lambda,b,\Omega,f_{1},f_{2})(w)=0,$$
where
\begin{equation}\label{definition of Fj}
	G_{j}(\lambda,b,\Omega,f_{1},f_{2})(w):=\mbox{Im}\left\lbrace\Big(\Omega\Phi_{j}(w)+S(\lambda,\Phi_{2},\Phi_{j})(w)-S(\lambda,\Phi_{1},\Phi_{j})(w)\Big)\overline{w}\overline{\Phi_{j}'(w)}\right\rbrace,
\end{equation}
with
\begin{equation}\label{def S}
	\forall(i,j)\in\{1,2\}^{2},\quad\forall w\in\mathbb{T},\quad S(\lambda,\Phi_{i},\Phi_{j})(w):=\fint_{\mathbb{T}}\Phi_{i}'(\tau)K_{0}\left(\lambda|\Phi_{j}(w)-\Phi_{i}(\tau)|\right)d\tau.
\end{equation}
Then, finding a non trivial uniformly rotating vortex patch for \eqref{QGSW equations} reduces to finding zeros of the non-linear functional 
$$G:=(G_{1},G_{2}).$$
As stated in the introduction, these non trivial solutions may be obtained by bifurcation techniques from trivial solutions which are annuli. Let us recover with this formalism that indeed the annuli rotate for any angular velocity. This is given by the following result.
\begin{lem}\label{lemma annulus as trivial solution}
	Let $b\in(0,1).$ Then the annulus $A_{b}$ defined in \eqref{definition of the annulus Ab} is a rotating patch for \eqref{QGSW equations} for any angular velocity $\Omega\in\mathbb{R}.$
\end{lem}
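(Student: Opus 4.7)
The strategy is to substitute the trivial conformal parametrizations $\Phi_1(w)=w$ and $\Phi_2(w)=bw$ (so $f_1=f_2=0$) into the defining expression \eqref{definition of Fj} for $G_j$ and show that, for each $j\in\{1,2\}$ and each $w\in\mathbb{T}$, the argument of $\mbox{Im}\{\cdot\}$ is a real number. Since $\Phi_j'(w)=c_j$ with $c_1=1,\,c_2=b$ both real, the rigid-rotation term $\Omega\Phi_j(w)\overline{w}\,\overline{\Phi_j'(w)}=\Omega c_j^2 |w|^2=\Omega c_j^2$ is trivially real. Thus the whole task reduces to checking that the singular integral part
$$\bigl(S(\lambda,\Phi_2,\Phi_j)(w)-S(\lambda,\Phi_1,\Phi_j)(w)\bigr)\,\overline{w}\,\overline{\Phi_j'(w)}$$
is real for $w\in\mathbb{T}$.

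The key observation is rotational invariance of $S$ when $\Phi_i(\tau)=c_i\tau$. Writing $w=e^{\ii\alpha}$, $\tau=e^{\ii\theta}$, and performing the change of variables $\theta=\alpha+\phi$ in \eqref{def S}, I would obtain
$$S(\lambda,\Phi_i,\Phi_j)(w)=\frac{c_i\,e^{\ii\alpha}}{2\pi}\int_{0}^{2\pi}K_{0}\!\Bigl(\lambda\sqrt{c_j^{2}-2c_ic_j\cos\phi+c_i^{2}}\Bigr)\,e^{\ii\phi}\,d\phi=:w\,A_{ij}(\lambda),$$
where the constant $A_{ij}(\lambda)\in\mathbb{C}$ does not depend on $w$. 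Multiplying by $\overline{w}\,\overline{\Phi_j'(w)}=c_j\overline{w}$ cancels the $w$ factor and yields $c_j A_{ij}(\lambda)$.

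It remains to check that $A_{ij}(\lambda)\in\mathbb{R}$. This follows from a parity argument: the integrand $K_0\bigl(\lambda\sqrt{c_j^2-2c_ic_j\cos\phi+c_i^2}\bigr)$ is even in $\phi$, and hence its product with $\sin\phi$ is odd and integrates to zero over $[0,2\pi]$, while its product with $\cos\phi$ gives a real number. Therefore the bracket inside $\mbox{Im}\{\cdot\}$ in $G_j(\lambda,b,\Omega,0,0)(w)$ is real, so $G_j(\lambda,b,\Omega,0,0)\equiv 0$ on $\mathbb{T}$ for any $\Omega\in\mathbb{R}$, establishing that $A_b$ is a rotating patch at arbitrary angular velocity. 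There is no real obstacle here; the only point that deserves care is the use of the paper's convention $d\tau=\frac{1}{2\ii\pi}\,d\tau$ so that the factor $e^{\ii\theta}\,d\theta$ correctly produces the rotational covariance $S(\lambda,\Phi_i,\Phi_j)(w)=w\,A_{ij}(\lambda)$.
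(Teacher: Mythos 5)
Your proposal is correct and follows essentially the same route as the paper: set $f_1=f_2=0$, use the change of variables $\tau\mapsto w\tau$ (your $\theta=\alpha+\phi$) to factor out the rotational covariance, and then observe that the remaining mean-value integral is real by symmetry in $\phi$ (the paper phrases this as the integral being equal to its own conjugate via $\theta\mapsto-\theta$, which is the same parity argument). No gaps.
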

\begin{proof}
	Taking $f_{1}=f_{2}=0$ by in \eqref{definition of Fj}, we get 
	$$G_{1}(\lambda,b,\Omega,0,0)(w)=\mbox{Im}\left\lbrace b\overline{w}\fint_{\mathbb{T}}K_{0}\left(\lambda|w-b\tau|\right)d\tau-\overline{w}\fint_{\mathbb{T}}K_{0}\left(\lambda|w-\tau|\right)d\tau\right\rbrace.$$
	Using the changes of variables $\tau\mapsto w\tau$ and the fact that $|w|=1$, we have 
	$$G_{1}(\lambda,b,\Omega,0,0)(w)=\mbox{Im}\left\lbrace b\fint_{\mathbb{T}}K_{0}\left(\lambda|1-b\tau|\right)d\tau-\fint_{\mathbb{T}}K_{0}\left(\lambda|1-\tau|\right)d\tau\right\rbrace=0.$$
	Indeed for $a\in\{1,b\},$ we have by \eqref{symmetry Bessel} and the change of variables $\theta\mapsto-\theta$
	\begin{align}\label{realint}
		\displaystyle\overline{\fint_{\mathbb{T}}K_{0}\left(\lambda|1-a\tau|\right)d\tau} & =  \displaystyle\overline{\frac{1}{2\pi}\int_{0}^{2\pi}K_{0}\left(\lambda|1-ae^{\ii\theta}|\right)e^{\ii\theta}d\theta}\nonumber\\
		& =  \displaystyle\frac{1}{2\pi}\int_{0}^{2\pi}K_{0}\left(\lambda|1-ae^{\ii\theta}|\right)e^{-\ii\theta}d\theta\nonumber\\
		&= \displaystyle\frac{1}{2\pi}\int_{0}^{2\pi}K_{0}\left(\lambda|1-ae^{-\ii\theta}|\right)e^{\ii\theta}d\theta\nonumber\\
		& =  \displaystyle\frac{1}{2\pi}\int_{0}^{2\pi}K_{0}\left(\lambda|1-ae^{\ii\theta}|\right)e^{\ii\theta}d\theta\nonumber\\
		& =  \displaystyle\fint_{\mathbb{T}}K_{0}\left(\lambda|1-a\tau|\right)d\tau.
	\end{align}
	Similarly, we find
	$$G_{2}(\lambda,b,\Omega,0,0)(w)=0.$$
	This proves Lemma \ref{lemma annulus as trivial solution}.
\end{proof}
\subsection{Function spaces and regularity of the functional}\label{subsec func spaces}
We introduce here the function spaces used along this work. Throughout the paper it is more convenient to think of $2\pi$-periodic function $g:\mathbb{R}\rightarrow\mathbb{C}$ as a function of the complex variable $w=e^{\ii\theta}.$ To be more precise, let $f:\mathbb{T}\rightarrow\mathbb{R}^{2}$, be a continuous function, then it can be assimilated to a $2\pi$-periodic function $g:\mathbb{R}\rightarrow\mathbb{R}^{2}$ via the relation
$$f(w)=g(\theta),\quad w=e^{\ii\theta}.$$
Hence, when $f$ is smooth enough, we get
$$f'(w):=\frac{df}{dw}=-\ii e^{-\ii\theta}g'(\theta).$$
Since $\frac{d}{dw}$ and $\frac{d}{d\theta}$ differ only by a smooth factor with modulus one, we shall in the
sequel work with $\frac{d}{dw}$ instead of $\frac{d}{d\theta}$ which appears more suitable in the computations.
In addition, if $f$ is of class $C^{1}$ and has real Fourier coefficients, then we can easily check that
\begin{equation}\label{formula derivative conjugate}
\left(\overline{f}\right)'(w)=-\frac{\overline{f'(w)}}{w^{2}}.
\end{equation}
We shall now recall the definition of Hölder spaces on the unit circle.
\begin{defin}
Let $\alpha\in(0,1).$
\begin{enumerate}[label=(\roman*)]
\item We denote by $C^{\alpha}(\mathbb{T})$ the space of continuous functions $f$ such that
$$\| f\|_{C^{\alpha}(\mathbb{T})}:=\| f\|_{L^{\infty}(\mathbb{T})}+\sup_{\underset{\tau\neq w}{(\tau,w)\in\mathbb{T}^{2}}}\frac{|f(\tau)-f(w)|}{|\tau-w|^{\alpha}}<\infty.$$
\item We denote by $C^{1+\alpha}(\mathbb{T})$ the space of $C^{1}$ functions with $\alpha$-Hölder continuous derivative 
$$\| f\|_{C^{1+\alpha}(\mathbb{T})}:=\| f\|_{L^{\infty}(\mathbb{T})}+\Big\| \frac{df}{dw}\Big\|_{C^{\alpha}(\mathbb{T})}<\infty.$$
\end{enumerate}
\end{defin}
\noindent For $\alpha\in(0,1),$ we set
$$X^{1+\alpha}:=X_{1}^{1+\alpha}\times X_{1}^{1+\alpha}\quad\mbox{ with }\quad X_{1}^{1+\alpha}:=\left\lbrace f\in C^{1+\alpha}(\mathbb{T})\quad\textnormal{s.t.}\quad\forall w\in\mathbb{T},\,f(w)=\sum_{n=0}^{\infty}f_{n}\overline{w}^{n},\,f_{n}\in\mathbb{R}\right\rbrace$$
and
$$Y^{\alpha}:=Y_{1}^{\alpha}\times Y_{1}^{\alpha}\quad\mbox{ with }\quad Y_{1}^{\alpha}:=\left\lbrace g\in C^{\alpha}(\mathbb{T})\quad\textnormal{s.t.}\quad\forall w\in\mathbb{T},\,g(w)=\sum_{n=1}^{\infty}g_{n}e_{n}(w),\,g_{n}\in\mathbb{R}\right\rbrace,$$
where
$$e_{n}(w):=\mbox{Im}(w^{n}).$$
We denote 
$$B_{r}^{1+\alpha}:=\Big\{ f\in X_{1}^{1+\alpha}\quad\textnormal{s.t.}\quad\| f\|_{C^{1+\alpha}(\mathbb{T})}<r\Big\}.$$
We can encode the $\mathbf{m}$-fold structure in the functional spaces by setting
$$X_{\mathbf{m}}^{1+\alpha}:=X_{1,\mathbf{m}}^{1+\alpha}\times X_{1,\mathbf{m}}^{1+\alpha}\quad \mbox{ with }\quad X_{1,\mathbf{m}}^{1+\alpha}:=\left\lbrace f\in X_{1}^{1+\alpha}\quad\textnormal{s.t.}\quad\forall w\in\mathbb{T},\,f(w)=\sum_{n=1}^{\infty}f_{\mathbf{m}n-1}\overline{w}^{\mathbf{m}n-1}\right\rbrace$$
and
$$Y_{\mathbf{m}}^{\alpha}:=Y_{1,\mathbf{m}}^{\alpha}\times Y_{1,\mathbf{m}}^{\alpha}\quad \mbox{ with }\quad Y_{1,\mathbf{m}}^{\alpha}:=\left\lbrace g\in Y_{1}^{\alpha}\quad\textnormal{s.t.}\quad\forall w\in\mathbb{T},\,g(w)=\sum_{n=1}^{\infty}g_{\mathbf{m}n}e_{\mathbf{m}n}(w)\right\rbrace.$$
The spaces $X^{1+\alpha}$ and $X_{\mathbf{m}}^{1+\alpha}$ $\big($resp. $Y^{\alpha}$ and $Y_{\mathbf{m}}^{\alpha}\big)$ are equipped with the strong product topology of $C^{1+\alpha}(\mathbb{T})\times C^{1+\alpha}(\mathbb{T})$ $\big($resp. $C^{\alpha}(\mathbb{T})\times C^{\alpha}(\mathbb{T})\big)$. We also denote 
$$B_{r,\mathbf{m}}^{1+\alpha}:=\Big\{f\in X_{1,\mathbf{m}}^{1+\alpha}\quad\textnormal{s.t.}\quad\| f\|_{C^{1+\alpha}(\mathbb{T})}<r\Big\}=B_{r}^{1+\alpha}\cap X_{1,\mathbf{m}}^{1+\alpha}.$$

We shall now investigate the regularity of the nonlinear functional $G$ defined by \eqref{definition of Fj}. Indeed, Crandall-Rabinowitz's Theorem \ref{Crandall-Rabinowitz theorem} requires some regularity assumptions to apply and this is what we check here. The ingredients of the proof are classical and they are  postponed to the Appendix \ref{appendix proof reg funct}.
\begin{prop}\label{proposition regularity of the functional}
Let $\lambda>0,$ $b\in(0,1)$, $\alpha\in(0,1)$ and $\mathbf{m}\in\mathbb{N}^*.$ There exists $r>0$ such that 
\begin{enumerate}[label=(\roman*)]
\item $G(\lambda,b,\cdot,\cdot,\cdot):\mathbb{R}\times B_{r}^{1+\alpha}\times B_{r}^{1+\alpha}\rightarrow Y^{\alpha}$ is well-defined and of classe $C^{1}.$
\item The restriction $G(\lambda,b,\cdot,\cdot,\cdot):\mathbb{R}\times B_{r,\mathbf{m}}^{1+\alpha}\times B_{r,\mathbf{m}}^{1+\alpha}\rightarrow Y_{\mathbf{m}}^{\alpha}$ is well-defined.
\item The partial derivative $\partial_{\Omega}DG(\lambda,b,\cdot,\cdot,\cdot):\mathbb{R}\times B_{r}^{1+\alpha}\times B_{r}^{1+\alpha}\rightarrow\mathcal{L}(X^{1+\alpha},Y^{\alpha})$ exists and is continuous.
\end{enumerate}
\end{prop}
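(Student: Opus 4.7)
The plan is to split each component $G_j$ into three contributions and handle them separately: the algebraic term $\Omega\Phi_j(w)\overline{w}\overline{\Phi_j'(w)}$, the \emph{off-diagonal} singular integrals $S(\lambda,\Phi_i,\Phi_j)$ with $i\neq j$, and the \emph{diagonal} ones $S(\lambda,\Phi_j,\Phi_j)$. For $r>0$ small enough, the smallness of $\|f_1\|_{C^{1+\alpha}}$ and $\|f_2\|_{C^{1+\alpha}}$ ensures that each $\Phi_j$ remains a univalent conformal map with $|\Phi_j'|$ uniformly bounded below, and that the images $\Phi_1(\mathbb{T})$ and $\Phi_2(\mathbb{T})$ stay disjoint (being small perturbations of the circles of radii $1$ and $b<1$). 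Consequently, for $i\neq j$, the kernel $(w,\tau)\mapsto K_0\bigl(\lambda|\Phi_j(w)-\Phi_i(\tau)|\bigr)$ is smooth jointly in all arguments, and standard dominated convergence arguments combined with Hölder estimates show that $S(\lambda,\Phi_i,\Phi_j)$ and its Fréchet derivatives with respect to $(f_1,f_2)$ map continuously into $C^{1+\alpha}(\mathbb{T})$.

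The diagonal term is the delicate one because of the logarithmic singularity of $K_0$ at the origin. Using the standard expansion $K_0(x)=-\log(x/2)-\gamma+\widetilde{K}_0(x)$, where $\widetilde{K}_0$ is smooth and vanishes at $0$, I would split
$$S(\lambda,\Phi_j,\Phi_j)(w)=-\fint_{\mathbb{T}}\Phi_j'(\tau)\log|\Phi_j(w)-\Phi_j(\tau)|\,d\tau+\textnormal{smoother remainders},$$
and then use the factorization $\Phi_j(w)-\Phi_j(\tau)=(w-\tau)\Psi_j(w,\tau)$, where $\Psi_j$ is of class $C^{\alpha}$ jointly in $(w,\tau)$ and bounded below by univalence. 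This reduces matters to the classical log-potential operator $f\mapsto \fint_{\mathbb{T}}f(\tau)\log|w-\tau|\,d\tau$, whose action $C^{\alpha}(\mathbb{T})\to C^{1+\alpha}(\mathbb{T})$ is well known, plus a manageable perturbation coming from $\log|\Psi_j|$. The $C^1$ character is then obtained by differentiating formally under the integral sign and checking that the resulting kernels retain at worst a logarithmic singularity, so the same scheme applies to $DG$ and guarantees continuity of $DG$ with respect to $(f_1,f_2)$. This yields item $(i)$.

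For item $(ii)$, I would exploit the symmetry structure: if $f_j\in X^{1+\alpha}_{1,\mathbf{m}}$, then $\Phi_j(e^{2\ii\pi/\mathbf{m}}w)=e^{2\ii\pi/\mathbf{m}}\Phi_j(w)$ and $\Phi_j'(e^{2\ii\pi/\mathbf{m}}w)=\Phi_j'(w)$. Substituting into the definition of $G_j$ and performing the change of variable $\tau\mapsto e^{2\ii\pi/\mathbf{m}}\tau$ inside each $S(\lambda,\Phi_i,\Phi_j)$ shows that $G_j(\lambda,b,\Omega,f_1,f_2)(e^{2\ii\pi/\mathbf{m}}w)=G_j(\lambda,b,\Omega,f_1,f_2)(w)$. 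Combined with the fact that $G_j$ is real-valued with the correct parity in $w$ inherited from the real Fourier coefficients of the $f_j$, this forces the Fourier expansion of $G_j$ to involve only the modes $e_{\mathbf{m}n}$, i.e.\ $G(\lambda,b,\Omega,f_1,f_2)\in Y^{\alpha}_{\mathbf{m}}$. Item $(iii)$ is the easiest: since $\Omega$ enters $G$ only through the linear term $\Omega\,\textnormal{Im}\bigl(\Phi_j(w)\overline{w}\overline{\Phi_j'(w)}\bigr)$, the map $\partial_\Omega DG$ is an explicit, $\Omega$-independent bilinear expression in $(f_1,f_2)$ whose continuity into $\mathcal{L}(X^{1+\alpha},Y^{\alpha})$ follows directly from the algebra estimates on $C^{1+\alpha}(\mathbb{T})$.

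The main obstacle is the log-singular diagonal integral: one must verify that the formal differentiation of the kernel $K_0\bigl(\lambda|\Phi_j(w)-\Phi_j(\tau)|\bigr)$ with respect to $f_j$ still produces an operator that is bounded from $X^{1+\alpha}$ into $Y^{\alpha}$, and that the resulting map depends continuously on $(f_1,f_2)$. This is precisely the technical content deferred to Appendix \ref{appendix proof reg funct}, where the bounds rely on the factorization above together with Hölder estimates on logarithmic integral operators on $\mathbb{T}$.
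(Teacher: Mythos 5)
Your proposal is correct and follows the same overall architecture as the paper: split $G_j$ into the part depending only on $f_j$ and the cross-interaction part, observe that for $r$ small the two curves $\Phi_1(\mathbb{T})$ and $\Phi_2(\mathbb{T})$ stay at distance at least $\tfrac{1-b}{2}$ so that the off-diagonal kernels $K_0(\lambda|\Phi_j(w)-\Phi_i(\tau)|)$, $i\neq j$, are non-singular and can be estimated by the mean value theorem, prove $(ii)$ by the rotation $w\mapsto e^{2\ii\pi/\mathbf{m}}w$ and the change of variables $\tau\mapsto e^{2\ii\pi/\mathbf{m}}\tau$, and prove $(iii)$ from the explicit affine dependence of $G$ on $\Omega$. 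The one genuine divergence is your treatment of the diagonal term $S(\lambda,\Phi_j,\Phi_j)$: the paper does not reprove this at all but imports it wholesale from \cite[Prop. 5.7]{DHR19} (noting only a harmless sign change), whereas you sketch a direct argument via $K_0(x)=-\log(x/2)-\boldsymbol{\gamma}+\widetilde{K}_0(x)$, the factorization $\Phi_j(w)-\Phi_j(\tau)=(w-\tau)\Psi_j(w,\tau)$, and the smoothing property of the logarithmic potential on $\mathbb{T}$. That route is the standard one (and essentially what the cited reference does), so it buys self-containedness at the cost of length; two small cautions if you carry it out: $\widetilde{K}_0$ is not smooth at the origin (by \eqref{explicit form for K0} it contains terms of the form $x^{2m}\log x$), though it is regular enough for the Hölder estimates needed here, and the most delicate point is not $S(\lambda,\Phi_j,\Phi_j)$ itself but its derivative in $f_j$, which involves the $1/x$-singular kernel $K_0'=-K_1$ compensated by the quadratic vanishing of $\mathrm{Re}\big((\overline{h_j(w)}-\overline{h_j(\tau)})(\Phi_j(w)-\Phi_j(\tau))\big)$ --- you correctly identify this as the technical core, and it must be estimated as a genuine singular integral rather than by dominated convergence alone.
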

\section{Spectral study}\label{sec spec std}
In this section, we study the linearized operator at the equilibrium state and look for the degeneracy conditions for its kernel.
\subsection{Linearized operator}
In this subsection, we compute the differential $DG(\lambda,b,\Omega,0,0)$ and show that it acts as a Fourier multiplier. More precisely, we prove the following proposition.
\begin{prop}\label{proposition linearized operator}
Let $\lambda>0,$ $b\in(0,1)$ and $\alpha\in(0,1).$ Then for all $\Omega\in\mathbb{R}$ and for all $(h_{1},h_{2})\in X^{1+\alpha},$ if we write
$$h_{1}(w)=\sum_{n=0}^{\infty}a_{n}\overline{w}^{n}\quad \mbox{ and }\quad h_{2}(w)=\sum_{n=0}^{\infty}b_{n}\overline{w}^{n},$$
we have for all $w\in\mathbb{T}$ 
$$DG(\lambda,b,\Omega,0,0)(h_{1},h_{2})(w)=\sum_{n=0}^{\infty}(n+1)M_{n+1}(\lambda,b,\Omega)\left(\begin{array}{c}
a_{n}\\
b_{n}
\end{array}\right)e_{n+1}(w),$$
where for all $n\in\mathbb{N}^{*}$, the matrix $M_{n}(\lambda,b,\Omega)$ is defined by
$$M_{n}(\lambda,b,\Omega):=\left(\begin{array}{cc}
\Omega_{n}(\lambda)-\Omega-b\Lambda_{1}(\lambda,b) & b\Lambda_{n}(\lambda,b)\\
-\Lambda_{n}(\lambda,b) & \Lambda_{1}(\lambda,b)-b\big[\Omega_{n}(\lambda b)+\Omega\big]
\end{array}\right),$$
with
$$\Lambda_{n}(\lambda,b):=I_{n}(\lambda b)K_{n}(\lambda)$$
and
$$\forall x>0,\quad\Omega_{n}(x):=I_{1}(x)K_{1}(x)-I_{n}(x)K_{n}(x).$$
Recall that the modified Bessel functions $I_n$ and $K_n$ are defined in Appendix \ref{appendix Bessel}.
\end{prop}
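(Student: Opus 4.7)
The plan is to compute $DG(\lambda,b,\Omega,0,0)[h_1,h_2](w)$ by direct differentiation of \eqref{definition of Fj}, then extract the Fourier coefficients using Graf's addition formula for the modified Bessel function $K_0$. Writing $G_j=\mbox{Im}(\Xi_j\,Q_j)$ with $\Xi_j:=\Omega\Phi_j+S(\lambda,\Phi_2,\Phi_j)-S(\lambda,\Phi_1,\Phi_j)$ and $Q_j:=\overline{w}\,\overline{\Phi_j'(w)}$, the product rule at $(f_1,f_2)=(0,0)$ yields
$$DG_j[h_1,h_2](w)=\mbox{Im}\bigl(D\Xi_j[h_1,h_2](w)\cdot Q_j^0(w)+\Xi_j^0(w)\cdot\overline{w}\,\overline{h_j'(w)}\bigr),$$
where the superscript $0$ denotes the value at the equilibrium. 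The "outer" piece will turn out to be a real constant (depending on $\lambda,b,\Omega$) times $\mbox{Im}(\overline{h_j'(w)})$, which immediately produces diagonal contributions.

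The first task is to evaluate $S_{i,j}^0$. For the cross interactions ($i\neq j$), Graf's addition formula gives the absolutely convergent expansion
$$K_0(\lambda|w-b\tau|)=\sum_{n\in\mathbb{Z}}\Lambda_{|n|}(\lambda,b)\,w^n\tau^{-n},\qquad w,\tau\in\mathbb{T},$$
valid because $b<1$. Combined with the residue identity $\fint_{\mathbb{T}}\tau^m\,d\tau=\delta_{m,-1}$, this yields $S(\lambda,\Phi_2,\Phi_1)^0(w)=b\Lambda_1(\lambda,b)w$ and $S(\lambda,\Phi_1,\Phi_2)^0(w)=\Lambda_1(\lambda,b)w$. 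For the self-interactions ($i=j$), one lifts Graf's formula to $r>1$ and passes to the limit by dominated convergence to obtain
$$\frac{1}{2\pi}\int_0^{2\pi}K_0\bigl(\lambda|1-e^{\ii\psi}|\bigr)e^{\ii m\psi}\,d\psi=I_{|m|}(\lambda)K_{|m|}(\lambda),$$
from which $S(\lambda,\Phi_1,\Phi_1)^0(w)=I_1(\lambda)K_1(\lambda)w$ and analogously $S(\lambda,\Phi_2,\Phi_2)^0(w)=bI_1(\lambda b)K_1(\lambda b)w$. In particular, $\Xi_j^0$ is proportional to $w$ with a real coefficient, recovering Lemma \ref{lemma annulus as trivial solution} and making the "outer" piece explicit.

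For $D\Xi_j$, the Gâteaux derivative of each $S$ integral reads
$$DS[h_1,h_2](w)=\fint_{\mathbb{T}}h_i'(\tau)K_0(\lambda|z_{ij}^0|)\,d\tau-\lambda\fint_{\mathbb{T}}\Phi_i^{0\prime}(\tau)\,\frac{K_1(\lambda|z_{ij}^0|)}{|z_{ij}^0|}\,\mbox{Re}\bigl[\overline{z_{ij}^0}\,(h_j(w)-h_i(\tau))\bigr]d\tau,$$
with $z_{ij}^0=\Phi_j^0(w)-\Phi_i^0(\tau)$. The first integral is directly computable from the Graf expansion above. For the singular $K_1$-kernel, the plan is to extend Graf's formula to the region $|w|>b$, take the $\partial_w$-derivative, and use the Bessel recurrence $\lambda K_\nu'(\lambda)+\nu K_\nu(\lambda)=-\lambda K_{\nu-1}(\lambda)$ to obtain the clean identity
$$\fint_{\mathbb{T}}\tau^{-j}\,\frac{K_1(\lambda|w-b\tau|)\,\overline{(w-b\tau)}}{|w-b\tau|}\,d\tau=I_{|1-j|}(\lambda b)\,K_{|j|}(\lambda)\,w^{-j}\qquad(|w|=1),$$
together with an analogous formula for the conjugate kernel $(w-b\tau)/|w-b\tau|$ derived via $\partial_{\bar{w}}$. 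Splitting $\mbox{Re}[\cdot]$ into these two pieces, substituting the Fourier series of $h_1$ and $h_2$, and integrating term-by-term yields every Fourier coefficient of $DS_{i,j}$, hence of $DG_j$, in the basis $e_{n+1}$.

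The main obstacle, in my view, is the singular kernel $K_1(\lambda|z|)/|z|$: its Fourier expansion must be derived by extending Graf's formula to an open annular neighbourhood of $\mathbb{T}$, differentiating, and restricting back, which requires justifying the interchange of differentiation and summation as well as careful use of Bessel recurrences. A secondary subtlety is the bookkeeping of positive versus negative Fourier modes, due both to the symmetry $\Lambda_{-n}=\Lambda_n$ and to the mixing of $h_i$ and $h_j$ modes inside $\mbox{Re}[\cdot]$. The final check is that the self-interaction contributions from $D\Xi_j$ combine with the constant $-n\,I_1(\lambda)K_1(\lambda)$ (resp.\ $-n\,I_1(\lambda b)K_1(\lambda b)$) coming from the outer piece $\Xi_j^0\cdot\overline{w}\,\overline{h_j'}$, conspiring to reconstruct exactly $(n+1)\Omega_{n+1}(\lambda)$ (resp.\ $(n+1)\Omega_{n+1}(\lambda b)$) on the diagonal of $M_{n+1}$, while the off-diagonal entries $\pm b\Lambda_{n+1}(\lambda,b)$ emerge cleanly from the cross-interaction derivatives.
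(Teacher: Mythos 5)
Your proposal is correct and rests on the same engine as the paper's proof -- expand the kernel $K_0(\lambda|\Phi_j(w)-\Phi_i(\tau)|)$ at the annulus into the Fourier series $\sum_m I_m(\lambda b)K_m(\lambda)\cos(m\theta)$ (your Graf formula is the paper's Beltrami summation formula \eqref{Beltrami's summation formula}), and extract coefficients by residue calculus; your computation of $\Xi_j^0$ and of the "outer piece" reproduces exactly the paper's $\mathcal{L}_1$-type terms. The execution differs in two tactical respects. First, for the $K_0'$-kernels the paper integrates by parts on $\mathbb{T}$, converting every such integral back into $\tfrac{1}{2\pi}\int_0^{2\pi}K_0(\lambda|1-be^{\ii\theta}|)\cos(n\theta)\,d\theta=I_n(\lambda b)K_n(\lambda)$ (its identity \eqref{simpl int}), whereas you differentiate the Graf expansion in an annular neighbourhood and use the recurrence \eqref{Bessel derivatives}; your closed-form identity $\fint_{\mathbb{T}}\tau^{-j}K_1(\lambda|w-b\tau|)\overline{(w-b\tau)}|w-b\tau|^{-1}d\tau=I_{|1-j|}(\lambda b)K_{|j|}(\lambda)w^{-j}$ checks out, at the price of justifying the interchange of $\partial_w$ with the sum. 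Second, for the self-interaction derivatives $D_{f_j}\mathcal{S}_j(\lambda,b,\Omega,0)$ the paper simply cites \cite[Prop. 5.8]{DHR19}, while you propose to rederive them by taking Abel limits of the expansion from $|w|=r>1$; be aware that this is where the real work hides, since after splitting $\mathrm{Re}[\overline{z}(h_j(w)-h_j(\tau))]$ the individual integrals carry a non-absolutely-integrable $K_1(\lambda|w-\tau|)\sim|w-\tau|^{-1}$ singularity and must be read as principal values (the series $\sum_m I_m(\lambda)K_{m-1}(\lambda)$ does converge, so the limit exists, but the "dominated convergence" you invoke needs to be replaced by an Abel-summation argument). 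Your route is more self-contained; the paper's is shorter because it outsources the genuinely singular piece and keeps all remaining integrals absolutely convergent on the circle.
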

\begin{proof}
Since $G=(G_{1},G_{2}),$ then for given $(h_{1},h_{2})\in X^{1+\alpha}$, we have 
\begin{equation}\label{DG at 0}
DG(\lambda,b,\Omega,0,0)(h_{1},h_{2})=\left(\begin{array}{c}
D_{f_{1}}G_{1}(\lambda,b,\Omega,0,0)h_{1}+D_{f_{2}}G_{1}(\lambda,b,\Omega,0,0)h_{2}\\
D_{f_{1}}G_{2}(\lambda,b,\Omega,0,0)h_{1}+D_{f_{2}}G_{2}(\lambda,b,\Omega,0,0)h_{2}
\end{array}\right).
\end{equation}
But, with the notation introduced in Appendix \ref{appendix proof reg funct}, we can write
\begin{equation}\label{DGis at 0}
\left\lbrace\begin{array}{rcl}
D_{f_{1}}G_{1}(\lambda,b,\Omega,0,0)h_{1} & = & D_{f_{1}}\mathcal{S}_{1}(\lambda,b,\Omega,0)h_{1}+D_{f_{1}}\mathcal{I}_{1}(\lambda,b,0,0)h_{1}\\
D_{f_{2}}G_{2}(\lambda,b,\Omega,0,0)h_{2} & = & D_{f_{2}}\mathcal{S}_{2}(\lambda,b,\Omega,0)h_{2}+D_{f_{2}}\mathcal{I}_{2}(\lambda,b,0,0)h_{2}\\
D_{f_{2}}G_{1}(\lambda,b,\Omega,0,0)h_{2} & = & D_{f_{2}}\mathcal{I}_{1}(\lambda,b,0,0)h_{2}\\
D_{f_{1}}G_{2}(\lambda,b,\Omega,0,0)h_{1} & = & D_{f_{1}}\mathcal{I}_{2}(\lambda,b,0,0)h_{1}.
\end{array}\right.
\end{equation}
We write
$$h_{1}(w)=\sum_{n=0}^{\infty}a_{n}\overline{w}^{n}\quad \mbox{ and }\quad h_{2}(w)=\sum_{n=0}^{\infty}b_{n}\overline{w}^{n}.$$
It has already been proved in \cite[Prop. 5.8]{DHR19} that for all $w\in\mathbb{T},$
\begin{equation}\label{Df1O1 at 0}
D_{f_{1}}\mathcal{S}_{1}(\lambda,b,\Omega,0)h_{1}(w)=\sum_{n=0}^{\infty}(n+1)\left(\Omega_{n+1}(\lambda)-\Omega\right)a_{n}e_{n+1}(w),
\end{equation}
where
$$\Omega_{n}(\lambda):=I_{1}(\lambda)K_{1}(\lambda)-I_{n}(\lambda)K_{n}(\lambda).$$
By a similar calculus, we get 
\begin{equation}\label{Df2O2 at 0}
D_{f_{2}}\mathcal{S}_{2}(\lambda,b,\Omega,0)h_{2}(w)=-\sum_{n=0}^{\infty}(n+1)b\left(\Omega_{n+1}(\lambda b)+\Omega\right)b_{n}e_{n+1}(w).
\end{equation}
In view of \eqref{DfjNj general}, we can write 
$$D_{f_{1}}\mathcal{I}_{1}(\lambda,b,0,0)h_{1}(w)=\mathcal{L}_{1}(h_{1})(w)+\mathcal{L}_{2}(h_{1})(w),$$
with
\begin{align*}
\mathcal{L}_{1}(h_{1})(w)&:=\mbox{Im}\left\lbrace\overline{w}\overline{h_{1}'(w)}b\fint_{\mathbb{T}}K_{0}\left(\lambda|w-b\tau|\right)d\tau\right\rbrace,\\
\mathcal{L}_{2}(h_{1})(w)&:=\mbox{Im}\left\lbrace\frac{\lambda b}{2}\overline{w}\fint_{\mathbb{T}}K_{0}'\left(\lambda|w-b\tau|\right)\frac{\overline{h_{1}(w)}(w-b\tau)+h_{1}(w)(\overline{w}-b\overline{\tau})}{|w-b\tau|}d\tau\right\rbrace.
\end{align*}
By using the change of variables $\tau\mapsto w\tau$ and the fact that $|w|=1$, we deduce 
$$\overline{w}\fint_{\mathbb{T}}K_{0}\left(\lambda|w-b\tau|\right)d\tau=\fint_{\mathbb{T}}K_{0}\left(\lambda|1-b\tau|\right)d\tau.$$
Moreover, from \eqref{realint}, we know that 
\begin{align*}
\fint_{\mathbb{T}}K_{0}\left(\lambda|1-b\tau|\right)d\tau\in\mathbb{R}.
\end{align*}
So using that
\begin{equation}\label{module formula}
	|1-be^{\ii\theta}|=\left(1-2b\cos(\theta)+b^{2}\right)^{\frac{1}{2}}\quad \mbox{ with }\quad b\in(0,1),
\end{equation}
we obtain from \eqref{symmetry Bessel},
\begin{align*}
\displaystyle\fint_{\mathbb{T}}K_{0}\left(\lambda|1-b\tau|\right)d\tau & =  \displaystyle\mbox{Re}\left\lbrace\frac{1}{2\pi}\int_{0}^{2\pi}K_{0}\left(\lambda|1-be^{\ii\theta}|\right)e^{\ii\theta}d\theta\right\rbrace\\
& =  \displaystyle\frac{1}{2\pi}\int_{0}^{2\pi}K_{0}\left(\lambda|1-be^{\ii\theta}|\right)\cos(\theta)d\theta.
\end{align*}
Now, by \eqref{Beltrami's summation formula} and \eqref{symmetry Bessel}, one obtains for all $n\in\mathbb{N}^{*},$
\begin{align}\label{simpl int}
\frac{1}{2\pi}\int_{0}^{2\pi}K_{0}\left(\lambda|1-be^{\ii\theta}|\right)\cos(n\theta)d\theta&=  \frac{1}{2\pi}\int_{0}^{2\pi}\sum_{m=-\infty}^{\infty}I_{m}(\lambda b)K_{m}(\lambda)\cos(m\theta)\cos(n\theta)d\theta\nonumber\\
&=\frac{1}{2\pi}\sum_{m=-\infty}^{\infty}I_{m}(\lambda b)K_{m}(\lambda)\int_{0}^{2\pi}\cos(m\theta)\cos(n\theta)d\theta\nonumber\\
&= I_{n}(\lambda b)K_{n}(\lambda).
\end{align}
Notice that the inversion of symbols of summation and integration is possible due to the geometric decay at infinity given by \eqref{asymptotic expansion of high order for the product InKn}. 
Then, we deduce by \eqref{formula derivative conjugate} that
$$\mathcal{L}_{1}(h_{1})(w)=-\sum_{n=0}^{\infty}nbI_{1}(\lambda b)K_{1}(\lambda)a_{n}e_{n+1}(w).$$
By using the change of variables $\tau\mapsto w\tau$ and the fact that $|w|=1$, we infer 
\begin{align*}
	&\overline{w}\fint_{\mathbb{T}}K_{0}'\left(\lambda|w-b\tau|\right)\frac{\overline{h_{1}(w)}(w-b\tau)+h_{1}(w)(\overline{w}-b\overline{\tau})}{|w-b\tau|}d\tau\\
	&\quad=\fint_{\mathbb{T}}K_{0}'\left(\lambda|1-b\tau|\right)\frac{\overline{h_{1}(w)}w(1-b\tau)+h_{1}(w)\overline{w}(1-b\overline{\tau})}{|1-b\tau|}d\tau.
\end{align*}
But
$$\fint_{\mathbb{T}}K_{0}'\left(\lambda|1-b\tau|\right)\frac{h_{1}(w)\overline{w}(1-b\overline{\tau})}{|1-b\tau|}d\tau=\sum_{n=0}^{\infty}a_{n}\left(\fint_{\mathbb{T}}K_{0}'\left(\lambda|1-b\tau|\right)\frac{(1-b\overline{\tau})}{|1-b\tau|}d\tau\right)\overline{w}^{n+1}$$
and
$$\fint_{\mathbb{T}}K_{0}'\left(\lambda|1-b\tau|\right)\frac{\overline{h_{1}(w)}w(1-b\tau)}{|1-b\tau|}d\tau=\sum_{n=0}^{\infty}a_{n}\left(\fint_{\mathbb{T}}K_{0}'\left(\lambda|1-b\tau|\right)\frac{(1-b\tau)}{|1-b\tau|}d\tau\right)w^{n+1}.$$
Moreover, by writting the line integral with the parametrization $\tau=e^{i\theta}$ and making the change of variables $\theta\mapsto-\theta$, we get as in \eqref{realint} 
$$\displaystyle\fint_{\mathbb{T}}K_{0}'\left(\lambda|1-b\tau|\right)\frac{(1-b\tau)}{|1-b\tau|}d\tau \in\mathbb{R}\quad\mbox{ and }\quad\fint_{\mathbb{T}}K_{0}'\left(\lambda|1-b\tau|\right)\frac{(1-b\overline{\tau})}{|1-b\tau|}d\tau\in\mathbb{R}.$$
Since $\mbox{Im}\left(\overline{w}^{n+1}\right)=-\mbox{Im}\left(w^{n+1}\right),$ we obtain 
$$\mathcal{L}_{2}(h_{1})(w)=\displaystyle\sum_{n=0}^{\infty}a_{n}\left(\frac{\lambda b}{2}\fint_{\mathbb{T}}K_{0}'\left(\lambda|1-b\tau|\right)\frac{b(\overline{\tau}-\tau)}{|1-b\tau|}d\tau\right)\mbox{Im}(w^{n+1}).$$
An integration by parts together with \eqref{module formula} and \eqref{simpl int} gives 
\begin{align*}
\displaystyle\frac{\lambda b}{2}\fint_{\mathbb{T}}K_{0}'\left(\lambda|1-b\tau|\right)\frac{b(\overline{\tau}-\tau)}{|1-b\tau|}d\tau & =  \displaystyle\frac{\lambda b}{4\pi}\int_{0}^{2\pi}K_{0}'\left(\lambda|1-be^{\ii\theta}|\right)\frac{b(e^{-\ii\theta}-e^{\ii\theta})e^{i\theta}}{|1-be^{\ii\theta}|}d\theta\\
& =  \displaystyle\frac{-b}{2\pi}\int_{0}^{2\pi}K_{0}\left(\lambda|1-be^{\ii\theta}|\right)e^{\ii\theta}d\theta\\
& =  \displaystyle\frac{-b}{2\pi}\int_{0}^{2\pi}K_{0}\left(\lambda|1-be^{\ii\theta}|\right)\cos(\theta)d\theta\\
& = \displaystyle-bI_{1}(\lambda b)K_{1}(\lambda).
\end{align*}
Therefore, 
$$\mathcal{L}_{2}(h_{1})(w)=-\sum_{n=0}^{\infty}bI_{1}(\lambda b)K_{1}(\lambda)a_{n}e_{n+1}(w).$$
Finally,
\begin{equation}\label{Df1N1 at 0}
D_{f_{1}}\mathcal{I}_{1}(\lambda,b,0,0)h_{1}(w)=-\sum_{n=0}^{\infty}b(n+1)I_{1}(\lambda b)K_{1}(\lambda)a_{n}e_{n+1}(w).
\end{equation}
Similar computations taking into acount the modification with $b$, change of signs and the fact that $|b-e^{i\theta}|=|1-be^{i\theta}|$ yield
\begin{equation}\label{Df2N2 at 0}
D_{f_{2}}\mathcal{I}_{2}(\lambda,b,0,0)(h_{2})(w)=\sum_{n=0}^{\infty}(n+1)I_{1}(\lambda b)K_{1}(\lambda)b_{n}e_{n+1}(w).
\end{equation}
According to \eqref{DfiNj general}, we can write 
$$D_{f_{2}}\mathcal{I}_{1}(\lambda,b,0,0)h_{2}(w)=\mathcal{L}_{3}(h_{2})(w)+\mathcal{L}_{4}(h_{2})(w),$$
with
\begin{align*}
	\mathcal{L}_{3}(h_{2})(w)&:=\mbox{Im}\left\lbrace\overline{w}\fint_{\mathbb{T}}h_{2}'(\tau)K_{0}\left(\lambda|w-b\tau|\right)d\tau\right\rbrace,\\
	\mathcal{L}_{4}(h_{2})(w)&:=-\frac{\lambda b}{2}\mbox{Im}\left\lbrace\overline{w}\fint_{\mathbb{T}}K_{0}'\left(\lambda|w-b\tau|\right)\frac{\overline{h_{2}(\tau)}(w-b\tau)+h_{2}(\tau)(\overline{w}-b\overline{\tau})}{|w-b\tau|}d\tau\right\rbrace.
\end{align*}
The change of variables $\tau\mapsto w\tau$ implies 
\begin{align*}
\mathcal{L}_{3}(h_{2})(w) & =  \displaystyle\mbox{Im}\left\lbrace\fint_{\mathbb{T}}h_{2}'(w\tau)K_{0}\left(\lambda|1-b\tau|\right)d\tau\right\rbrace\\
& =  \displaystyle-\sum_{n=0}^{\infty}nb_{n}\left(\fint_{\mathbb{T}}\overline{\tau}^{n+1}K_{0}\left(\lambda|1-b\tau|\right)d\tau\right)\mbox{Im}(\overline{w}^{n+1})\\
& =  \displaystyle\sum_{n=0}^{\infty}nb_{n}\left(\fint_{\mathbb{T}}\overline{\tau}^{n+1}K_{0}\left(\lambda|1-b\tau|\right)d\tau\right) e_{n+1}(w).
\end{align*}
But by symmetry and \eqref{simpl int}
\begin{align*}
\displaystyle\fint_{\mathbb{T}}\overline{\tau}^{n+1}K_{0}\left(\lambda|1-b\tau|\right)d\tau & = \displaystyle\frac{1}{2\pi}\int_{0}^{2\pi}e^{-\ii(n+1)\theta}K_{0}\left(\lambda|1-be^{\ii\theta}|\right)e^{\ii\theta}d\theta\\
& =  \displaystyle\frac{1}{2\pi}\int_{0}^{2\pi}K_{0}\left(\lambda|1-be^{\ii\theta}|\right)\cos(n\theta)d\theta\\
& = I_{n}(\lambda b)K_{n}(\lambda).
\end{align*}
Hence,
$$\mathcal{L}_{3}(h_{2})(w)=\sum_{n=0}^{\infty}nI_{n}(\lambda b)K_{n}(\lambda)b_{n}e_{n+1}(w).$$
By using the change of variables $\tau\mapsto w\tau$ and the fact that $|w|=1$, we have 
$$\mathcal{L}_{4}(h_{2})(w)=\frac{-\lambda b}{2}\mbox{Im}\left\lbrace\fint_{\mathbb{T}}K_{0}'\left(\lambda|1-b\tau|\right)\frac{\overline{h_{2}(w\tau)}w(1-b\tau)+h_{2}(w\tau)\overline{w}(1-b\overline{\tau})}{|1-b\tau|}d\tau\right\rbrace,$$
which also writes
$$\mathcal{L}_{4}(h_{2})(w)=\frac{-\lambda b}{2}\sum_{n=0}^{\infty}b_{n}\left(\fint_{\mathbb{T}}K_{0}'\left(\lambda|1-b\tau|\right)\frac{
(\tau^{n}-\overline{\tau}^{n})-b(\tau^{n+1}-\overline{\tau}^{n+1})}{|1-b\tau|}d\tau\right)\textnormal{Im}(w^{n}).$$
We denote 
$$\mathtt{I}:=\frac{-\lambda b}{2}\fint_{\mathbb{T}}K_{0}'\left(\lambda|1-b\tau|\right)\frac{(\tau^{n}-\overline{\tau}^{n})-b(\tau^{n+1}-\overline{\tau}^{n+1})}{|1-b\tau|}d\tau.$$
Since $\mathtt{I}\in\mathbb{R},$ we have 
\begin{align*}
	\mathtt{I}&=\displaystyle\frac{-\lambda b}{4\pi}\int_{0}^{2\pi}K_{0}'\left(\lambda|1-be^{\ii\theta}|\right)\frac{(e^{\ii n\theta}-e^{-\ii n\theta})-b(e^{\ii(n+1)\theta}-e^{-\ii(n+1)\theta})}{|1-be^{\ii\theta}|}e^{\ii\theta}d\theta\\
&=\displaystyle\frac{\lambda b}{2\pi}\int_{0}^{2\pi}K_{0}'\left(\lambda|1-be^{\ii\theta}|\right)\frac{\sin(\theta)}{|1-be^{\ii\theta}|}(\sin(n\theta)-b\sin((n+1)\theta))d\theta.
\end{align*}
Integrating by parts with \eqref{module formula} and using \eqref{simpl int} yield 
\begin{align*}
\mathtt{I}&=\frac{1}{2\pi}\int_{0}^{2\pi}K_{0}(\lambda|1-be^{\ii\theta}|)\left(b(n+1)\cos((n+1)\theta)-n\cos(n\theta)\right)\\
&=b(n+1)I_{n+1}(\lambda b)K_{n+1}(\lambda)-nI_{n}(\lambda b)K_{n}(\lambda).
\end{align*}
Therefore,
\begin{equation}\label{Df2N1 at 0}
D_{f_{2}}\mathcal{I}_{1}(\lambda,b,0,0)(h_{2})(w)=\sum_{n=0}^{\infty}b(n+1)I_{n+1}(\lambda b)K_{n+1}(\lambda)b_{n}e_{n+1}(w).
\end{equation}
Similar computations taking into acount the modification with $b$, change of signs and the fact that $|b-e^{i\theta}|=|1-be^{i\theta}|$ imply
\begin{equation}\label{Df1N2 at 0}
D_{f_{1}}\mathcal{I}_{2}(\lambda,b,0,0)(h_{1})(w)=-\sum_{n=0}^{\infty}(n+1)I_{n+1}(\lambda b)K_{n+1}(\lambda)a_{n}e_{n+1}(w).
\end{equation}
Gathering \eqref{DG at 0}, \eqref{DGis at 0}, \eqref{Df1N1 at 0}, \eqref{Df1N2 at 0}, \eqref{Df1O1 at 0}, \eqref{Df2N1 at 0}, \eqref{Df2N2 at 0} and \eqref{Df2O2 at 0}, we get the desired result. The proof of Proposition \ref{proposition linearized operator} is now complete.
\end{proof}
\subsection{Asymptotic monotonicity of the eigenvalues}\label{sec asym vp}
This subsection is devoted to the proof of Proposition \ref{proposition monotonicity of the eigenvalues} concerning the asymptotic monotonicity of the eigenvalues needed to ensure the one dimensional kernel assumption of Crandall-Rabinowitz's Theorem. But first, we have to prove their existence and this is the purpose of the following lemma.
\begin{lem}\label{lemma existence of eigenvalues}
Let $\lambda>0$ and $b\in(0,1).$ There exists $N_{0}(\lambda,b)\in\mathbb{N}^{*}$ such that for all integer $n\geqslant N_{0}(\lambda,b),$ there exist two angular velocities 
\begin{align}\label{definition af the angular velocities}
\Omega_{n}^{\pm}(\lambda,b)&:=\frac{1-b^{2}}{2b}\Lambda_{1}(\lambda,b)+\frac{1}{2}\Big(\Omega_{n}(\lambda)-\Omega_{n}(\lambda b)\Big)\nonumber\\
&\quad\pm\frac{1}{2b}\sqrt{\Big(b\big[\Omega_{n}(\lambda)+\Omega_{n}(\lambda b)\big]-(1+b^{2})\Lambda_{1}(\lambda,b)\Big)^{2}-4b^{2}\Lambda_{n}^{2}(\lambda,b)}
\end{align}
for which the matrix $M_{n}\left(\lambda,b,\Omega_{n}^{\pm}(\lambda,b)\right)$ is singular.
\end{lem}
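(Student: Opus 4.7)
The plan is purely algebraic up to a Bessel-function asymptotic. The matrix $M_n(\lambda, b, \Omega)$ is singular if and only if its determinant vanishes, and this determinant is a quadratic in $\Omega$ with positive leading coefficient $b$. I therefore compute this quadratic, apply the quadratic formula, and match the two resulting roots with $\Omega_n^{\pm}(\lambda, b)$ from \eqref{definition af the angular velocities}. The only real content is to confirm that, for $n$ large enough, the discriminant $\Delta_n(\lambda, b)$ is non-negative so that the roots are real.

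Setting $A_n := \Omega_n(\lambda) - b\Lambda_1(\lambda, b)$ and $B_n := \Lambda_1(\lambda, b) - b\Omega_n(\lambda b)$, the determinant expands as
$$\det M_n(\lambda, b, \Omega) = (A_n - \Omega)(B_n - b\Omega) + b\Lambda_n^2(\lambda, b) = b\Omega^2 - (A_n b + B_n)\Omega + A_n B_n + b\Lambda_n^2(\lambda, b).$$
A direct computation gives
$$A_n b + B_n = b\bigl[\Omega_n(\lambda) - \Omega_n(\lambda b)\bigr] + (1 - b^2)\Lambda_1(\lambda, b),$$
$$A_n b - B_n = b\bigl[\Omega_n(\lambda) + \Omega_n(\lambda b)\bigr] - (1 + b^2)\Lambda_1(\lambda, b),$$
and the discriminant of the quadratic collapses cleanly to
$$(A_n b + B_n)^2 - 4b\bigl(A_n B_n + b\Lambda_n^2(\lambda, b)\bigr) = (A_n b - B_n)^2 - 4b^2 \Lambda_n^2(\lambda, b) = \Delta_n(\lambda, b).$$
Applying the quadratic formula then yields precisely the two values announced in \eqref{definition af the angular velocities}.

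The real work is to guarantee $\Delta_n(\lambda, b) \geqslant 0$ for $n$ large. Using the sharp Bessel asymptotics collected in Appendix \ref{appendix Bessel}, one has $I_n(\lambda b) K_n(\lambda) = O(b^n/n)$ and $I_n(x) K_n(x) = O(1/n)$ as $n \to \infty$; hence
$$\Omega_n(\lambda) \longrightarrow I_1(\lambda) K_1(\lambda), \qquad \Omega_n(\lambda b) \longrightarrow I_1(\lambda b) K_1(\lambda b), \qquad \Lambda_n(\lambda, b) \longrightarrow 0,$$
so
$$A_n b - B_n \longrightarrow L(\lambda, b) := b\bigl[I_1(\lambda)K_1(\lambda) + I_1(\lambda b)K_1(\lambda b)\bigr] - (1 + b^2) I_1(\lambda b) K_1(\lambda),$$
while $4b^2 \Lambda_n^2(\lambda, b)$ vanishes exponentially fast in $n$. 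Provided $L(\lambda, b) \neq 0$, this forces $\Delta_n(\lambda, b) > 0$ for all $n \geqslant N_0(\lambda, b)$.

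The delicate point is therefore the non-vanishing of $L(\lambda, b)$. A natural strategy is to track its sign in the low- and high-$\lambda$ regimes: as $\lambda \to 0^+$ the asymptotics $I_1(x) K_1(x) \to \tfrac12$ and $x K_1(x) \to 1$ give $L(\lambda, b) \to b(1 - b^2)/2 > 0$, and as $\lambda \to +\infty$ the exponential decay of $K_1(\lambda)$ leaves only the first term of $L$, still positive. A continuity/monotonicity argument in $\lambda$ should then exclude any intermediate zero; alternatively, one may fall back on a finer $1/n$-expansion of $A_n b - B_n$ against the exponentially small remainder $4b^2 \Lambda_n^2$ to conclude $\Delta_n > 0$ even at any hypothetical degenerate point. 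This Bessel-function sign analysis is the only non-routine ingredient in the proof.
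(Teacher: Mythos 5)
Your algebraic reduction is correct and coincides with the paper's: the determinant is the quadratic $b\Omega^{2}-B_{n}\Omega+C_{n}$, its discriminant collapses to $\Delta_{n}(\lambda,b)=\big(b[\Omega_{n}(\lambda)+\Omega_{n}(\lambda b)]-(1+b^{2})\Lambda_{1}(\lambda,b)\big)^{2}-4b^{2}\Lambda_{n}^{2}(\lambda,b)$, and the large-$n$ Bessel asymptotics give $\Delta_{n}(\lambda,b)\to\delta_{\infty}^{2}(\lambda,b)$ with $\delta_{\infty}(\lambda,b)=L(\lambda,b)$ in your notation. The problem is that you stop exactly at the step you yourself call the only non-routine ingredient: the non-vanishing of $L(\lambda,b)$. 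Checking the sign of $L$ as $\lambda\to0^{+}$ and $\lambda\to+\infty$ does not exclude a zero at an intermediate $\lambda$ --- a continuous function positive near both ends of $(0,\infty)$ can still vanish in between --- and you neither establish any monotonicity in $\lambda$ nor carry out the alternative finer $1/n$ expansion you mention. As written, the proof of $\Delta_{n}(\lambda,b)>0$ for large $n$ is therefore incomplete.

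The gap is filled in the paper by a pointwise sign argument valid for every fixed $\lambda>0$ and $b\in(0,1)$: rewrite
$$\delta_{\infty}(\lambda,b)=\big[bI_{1}(\lambda)-I_{1}(\lambda b)\big]K_{1}(\lambda)+bI_{1}(\lambda b)\big[K_{1}(\lambda b)-bK_{1}(\lambda)\big].$$
The power series \eqref{definition of modified Bessel function of first kind} gives $I_{1}(\lambda b)=\sum_{m\geqslant0}b^{1+2m}\frac{(\lambda/2)^{1+2m}}{m!\Gamma(m+2)}<bI_{1}(\lambda)$ since $b^{1+2m}<b$ for $m\geqslant1$, so the first bracket is strictly positive; and $K_{1}$ is strictly decreasing on $(0,\infty)$ (from \eqref{ratio bounds with derivatives}), so $bK_{1}(\lambda)<K_{1}(\lambda)<K_{1}(\lambda b)$ and the second bracket is strictly positive as well. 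Hence $\delta_{\infty}(\lambda,b)>0$, $\Delta_{\infty}(\lambda,b)>0$, and by the convergence you already established, $\Delta_{n}(\lambda,b)>0$ for all $n\geqslant N_{0}(\lambda,b)$. Inserting this decomposition would make your proof complete and essentially identical to the paper's.
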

\begin{proof}
The determinant of $M_{n}(\lambda,b,\Omega)$ is 
\begin{align}\label{expression of the determinant}
\det\big(M_{n}(\lambda,b,\Omega)\big) &  =  \Big(\Omega_{n}(\lambda)-\Omega-b\Lambda_{1}(\lambda,b)\Big)\Big(\Lambda_{1}(\lambda,b)-b\big[\Omega_{n}(\lambda b)+\Omega\big]\Big)+b\Lambda_{n}^{2}(\lambda,b)\nonumber\\
& =  b\Omega^{2}-B_{n}(\lambda,b)\Omega+C_{n}(\lambda,b),
\end{align}
where
\begin{align*}
	B_{n}(\lambda,b)&:=(1-b^{2})\Lambda_{1}(\lambda,b)+b\big[\Omega_{n}(\lambda)-\Omega_{n}(\lambda b)\big],\\
	C_{n}(\lambda,b)&:=b\left[\left(\Lambda_{1}(\lambda,b)-\frac{1}{b}\Omega_{n}(\lambda)\right)\Big(b\Omega_{n}(\lambda b)-\Lambda_{1}(\lambda,b)\Big)+\Lambda_{n}^{2}(\lambda,b)\right].
\end{align*}
It is a polynomial of degree two in $\Omega$ which has at most two roots. Let us compute its discriminant. After straightforward computations, we find 
\begin{align}\label{def deltan}
\Delta_{n}(\lambda,b) & :=  B_{n}^{2}(\lambda,b)-4bC_{n}(\lambda,b)\nonumber\\
& = \Big(b\big[\Omega_{n}(\lambda)+\Omega_{n}(\lambda b)\big]-(1+b^{2})\Lambda_{1}(\lambda,b)\Big)^{2}-4b^{2}\Lambda_{n}^{2}(\lambda,b).
\end{align}
Using the asymptotic expansion of large order \eqref{asymptotic expansion of high order}, we infer 
\begin{equation}\label{asymp-1}
	\forall\lambda>0,\quad \forall b\in(0,1],\quad I_{n}(\lambda b)K_{n}(\lambda)\underset{n\rightarrow\infty}{\longrightarrow}0.
\end{equation}
As a consequence, 
\begin{equation}\label{convergence of Deltan towards Deltainfty}
\Delta_{n}(\lambda,b)\underset{n\rightarrow\infty}{\longrightarrow}\Delta_{\infty}(\lambda,b),
\end{equation}
where
\begin{equation}\label{definition of deltainfty}
\Delta_{\infty}(\lambda,b)=\delta_{\infty}^{2}(\lambda,b)\quad\mbox{ with }\quad\delta_{\infty}(\lambda,b):=b\big[I_{1}(\lambda)K_{1}(\lambda)+I_{1}(\lambda b)K_{1}(\lambda b)\big]-(1+b^{2})I_{1}(\lambda b)K_{1}(\lambda).
\end{equation}
We can rewrite $\delta_{\infty}(\lambda,b)$ as 
$$\delta_{\infty}(\lambda,b)=\big[bI_{1}(\lambda)-I_{1}(\lambda b)\big]K_{1}(\lambda)+bI_{1}(\lambda b)\big[K_{1}(\lambda b)-bK_{1}(\lambda)\big].$$
According to \eqref{ratio bounds with derivatives} and \eqref{symmetry Bessel}, we find $K_1'<0$ on $(0,\infty)$, which implies in turn the strict decay property of $K_{1}$ on $(0,\infty)$. Therefore, since $b\in(0,1),$ we get 
$$bK_{1}(\lambda)<K_{1}(\lambda)<K_{1}(\lambda b).$$
Now since $b\in(0,1)$, we obtain from \eqref{definition of modified Bessel function of first kind},
$$I_1(\lambda b)=\sum_{m=0}^{\infty}\frac{\left(\frac{\lambda b}{2}\right)^{1+2m}}{m!\Gamma(m+2)}< b\sum_{m=0}^{\infty}\frac{\left(\frac{\lambda}{2}\right)^{1+2m}}{m!\Gamma(m+2)}=bI_1(\lambda).$$ 
Finally, 
$$\Delta_{\infty}(\lambda,b)>0.$$
\noindent Thus 
\begin{equation}\label{criterion for Deltan to be positive}
\exists N_{0}(\lambda,b)\in\mathbb{N}^{*},\quad\forall n\in\mathbb{N}^{*},\quad n\geqslant N_{0}(\lambda,b)\Rightarrow\Delta_{n}(\lambda,b)>0.
\end{equation}
Therefore, for $n\geqslant N_{0}(\lambda,b)$ there exist two angular velocities $\Omega_{n}^{-}(\lambda,b)$ and $\Omega_{n}^{+}(\lambda,b)$ for which the matrix $M_{n}(\lambda,b,\Omega_{n}^{\pm}(\lambda,b))$ is singular. These angular velocities are defined by 
\begin{align*}
\Omega_{n}^{\pm}(\lambda,b)&:=\frac{ B_{n}(\lambda,b)\pm\sqrt{\Delta_{n}(\lambda,b)}}{2b}\\
&=\frac{1-b^{2}}{2b}\Lambda_{1}(\lambda,b)+\frac{1}{2}\Big(\Omega_{n}(\lambda)-\Omega_{n}(\lambda b)\Big)\\
&\quad\pm\frac{1}{2b}\sqrt{\Big(b\big[\Omega_{n}(\lambda)+\Omega_{n}(\lambda b)\big]-(1+b^{2})\Lambda_{1}(\lambda,b)\Big)^{2}-4b^{2}\Lambda_{n}^{2}(\lambda,b)}.
\end{align*}
This ends the proof of Lemma \ref{lemma existence of eigenvalues}. 
\end{proof}

We shall now study the monotonicity of the eigenvalues obtained in Lemma \ref{lemma existence of eigenvalues}. This is a crucial point to obtain later the one dimensional condition for the kernel of the linearized operator given by Proposition \ref{proposition linearized operator}. 
\begin{prop}\label{proposition monotonicity of the eigenvalues}
Let $\lambda>0$ and $b\in(0,1).$ There exists $N(\lambda,b)\in\mathbb{N}^{*}$ with $N(\lambda,b)\geqslant N_{0}(\lambda,b)$ where $N_{0}(\lambda,b)$ is defined in Lemma \ref{lemma existence of eigenvalues} such that 
\begin{enumerate}[label=(\roman*)]
\item The sequence $\left(\Omega_{n}^{+}(\lambda,b)\right)_{n\geqslant N(\lambda,b)}$ is strictly increasing and converges to $\Omega_{\infty}^{+}(\lambda,b)=I_{1}(\lambda)K_{1}(\lambda)-b\Lambda_{1}(\lambda,b).$
\item The sequence $\left(\Omega_{n}^{-}(\lambda,b)\right)_{n\geqslant N(\lambda,b)}$ is strictly decreasing and converges to $\Omega_{\infty}^{-}(\lambda,b)=\frac{\Lambda_{1}(\lambda,b)}{b}-I_{1}(\lambda b)K_{1}(\lambda b).$
\end{enumerate}
Then, we have for all $(m,n)\in(\mathbb{N}^{*})^{2}$ with $N(\lambda,b)\leqslant n<m$,
$$\Omega_{\infty}^{-}(\lambda,b)<\Omega_{m}^{-}(\lambda,b)<\Omega_{n}^{-}(\lambda,b)<\Omega_{n}^{+}(\lambda,b)<\Omega_{m}^{+}(\lambda,b)<\Omega_{\infty}^{+}(\lambda,b).$$
\end{prop}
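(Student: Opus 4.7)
The plan is to introduce the shorthand $u_n(x):=I_n(x)K_n(x)$ and $E_n(\lambda,b):=b[\Omega_n(\lambda)+\Omega_n(\lambda b)]-(1+b^2)\Lambda_1(\lambda,b)$, so that by \eqref{def deltan} one has $\Delta_n(\lambda,b)=E_n^2(\lambda,b)-4b^2\Lambda_n^2(\lambda,b)$. The starting point is the elementary identity $E_n-\sqrt{\Delta_n}=\frac{4b^2\Lambda_n^2}{E_n+\sqrt{\Delta_n}}$ (valid as soon as $E_n+\sqrt{\Delta_n}\neq 0$), which yields the convenient decomposition
\begin{align*}
\Omega_n^{\pm}(\lambda,b)=\frac{B_n(\lambda,b)\pm E_n(\lambda,b)}{2b}\mp\frac{2b\Lambda_n^2(\lambda,b)}{E_n(\lambda,b)+\sqrt{\Delta_n(\lambda,b)}}.
\end{align*}
A direct computation from the definitions of $B_n$ and $E_n$ gives
\begin{align*}
\frac{B_n+E_n}{2b}=u_1(\lambda)-u_n(\lambda)-b\Lambda_1(\lambda,b),\qquad \frac{B_n-E_n}{2b}=\frac{\Lambda_1(\lambda,b)}{b}-u_1(\lambda b)+u_n(\lambda b).
\end{align*}

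The monotonicity of $\Omega_n^{\pm}$ will then reduce to three monotonicity facts in $n$. First, $n\mapsto u_n(x)$ is strictly positive and strictly decreases to $0$ for every fixed $x>0$, a classical property of modified Bessel functions. Second, $n\mapsto E_n(\lambda,b)$ is strictly increasing and eventually positive: this follows from the identity $E_n=\delta_\infty(\lambda,b)-b[u_n(\lambda)+u_n(\lambda b)]$ (with $\delta_\infty$ as in \eqref{definition of deltainfty}), together with the first fact and the strict inequality $\delta_\infty(\lambda,b)>0$ already established in the proof of Lemma \ref{lemma existence of eigenvalues}. Third, $n\mapsto\Lambda_n^2(\lambda,b)$ is eventually strictly decreasing, as a consequence of the uniform large-order Bessel asymptotics giving $\Lambda_n(\lambda,b)\sim\tfrac{b^n}{2n}$ and $\Lambda_{n+1}/\Lambda_n\to b\in(0,1)$.

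Picking $N(\lambda,b)\geqslant N_0(\lambda,b)$ beyond which these three properties hold (so in particular $\Delta_n>0$ and $\Omega_n^{\pm}$ is furnished by Lemma \ref{lemma existence of eigenvalues}), one observes that $E_n^2$ strictly increases while $4b^2\Lambda_n^2$ strictly decreases, so $\Delta_n$ and hence $\sqrt{\Delta_n}$ and $E_n+\sqrt{\Delta_n}$ are strictly increasing; combined with $\Lambda_n^2>0$ decreasing, this forces the correction $\frac{2b\Lambda_n^2}{E_n+\sqrt{\Delta_n}}$ to be positive and strictly decreasing. Feeding this into the decomposition above, I conclude that $\Omega_n^+$ is the sum of two strictly increasing sequences, hence is strictly increasing, and dually $\Omega_n^-$ is the sum of two strictly decreasing sequences, hence is strictly decreasing. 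Taking $n\to\infty$ and using $u_n(x)\to 0$ and $\Lambda_n(\lambda,b)\to 0$ recovers the announced limits $\Omega_\infty^{\pm}(\lambda,b)$. The full chain of strict inequalities then follows at once, the middle link $\Omega_n^-<\Omega_n^+$ being simply $\sqrt{\Delta_n}>0$.

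The principal obstacle is upgrading the asymptotic statement $\Lambda_n\sim b^n/(2n)$ into a quantitative, termwise monotonicity statement for $(\Lambda_n^2)_{n\geqslant N(\lambda,b)}$: this requires effective control on the error terms in the uniform large-order expansion of the modified Bessel functions, and a similar care is needed to make explicit the threshold from which $E_n>0$. These estimates on ratios of Bessel functions are technically delicate, and pinning down an effective value of $N(\lambda,b)$ is essentially where the computation lives.
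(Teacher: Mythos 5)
Your argument is correct, and it takes a genuinely different route from the paper. The paper proves monotonicity by estimating the consecutive difference $\Omega_{n+1}^{\pm}-\Omega_{n}^{\pm}$ directly: it shows the term outside the square root is $O_{\lambda,b}(n^{-4})$ (after a cancellation of the leading $\tfrac{1}{2n}$ terms in the expansion of $I_nK_n$) while $\sqrt{\Delta_{n+1}}-\sqrt{\Delta_n}\sim b/n^2$, so the sign of the difference is eventually dictated by the square-root term; this requires the second-order term of \eqref{asymptotic expansion of high order for the product InKn} and a somewhat delicate bookkeeping of which contributions survive. Your rationalization $E_n-\sqrt{\Delta_n}=4b^2\Lambda_n^2/(E_n+\sqrt{\Delta_n})$ instead decomposes $\Omega_n^{\pm}$ into $\Omega_\infty^{\pm}$ plus explicitly signed, explicitly monotone corrections ($\mp u_n$, $\mp$ a positive ratio with decreasing numerator and increasing positive denominator), reducing everything to the strict decay of $\nu\mapsto I_\nu(x)K_\nu(x)$ (stated in the paper's appendix), the positivity of $\delta_\infty$ from the proof of Lemma \ref{lemma existence of eigenvalues} via the rewriting $E_n=\delta_\infty-b[u_n(\lambda)+u_n(\lambda b)]$, and the leading-order asymptotic $\Lambda_n\sim b^n/(2n)$ only through the ratio $\Lambda_{n+1}/\Lambda_n\to b<1$. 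This buys a cleaner, lower-tech proof that needs no cancellation at order $n^{-4}$, and it yields the strict bounds $\Omega_n^{\pm}\lessgtr\Omega_\infty^{\pm}$ directly rather than via monotone convergence. One remark: the "principal obstacle" you flag at the end is not actually an obstacle for this statement — the proposition only asserts the existence of some $N(\lambda,b)$, so the qualitative facts that $E_n>0$ eventually and that $\Lambda_{n+1}/\Lambda_n\to b<1$ (both immediate from \eqref{definition of deltainfty} and \eqref{asymptotic expansion of high order for the product InKn}) already suffice; no effective error control is required.
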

\begin{proof}
The convergence is an immediate consequence of \eqref{definition af the angular velocities}, \eqref{convergence of Deltan towards Deltainfty}, \eqref{definition of deltainfty} and \eqref{asymp-1}. Then, we turn to the asymptotic monotonicity. For that purpose, we study the sign of the difference
$$\Omega_{n+1}^{\pm}(\lambda,b)-\Omega_{n}^{\pm}(\lambda,b)=\frac{1}{2}\Big(\big[\Omega_{n+1}(\lambda)-\Omega_{n}(\lambda)\big]-\big[\Omega_{n+1}(\lambda b)-\Omega_{n}(\lambda b)\big]\Big)\pm\frac{1}{2b}\left[\sqrt{\Delta_{n+1}(\lambda,b)}-\sqrt{\Delta_{n}(\lambda,b)}\right]$$
for $n$ large enough.\\
$\blacktriangleright$ We first study the difference term before the square roots. We can write 
\begin{align*}
&\big[\Omega_{n+1}(\lambda)-\Omega_{n+1}(\lambda b)\big]-\big[\Omega_{n}(\lambda)-\Omega_{n}(\lambda b)\big]\\
&=\big[\Omega_{n+1}(\lambda)-\Omega_{n}(\lambda)\big]-\big[\Omega_{n+1}(\lambda b)-\Omega_{n}(\lambda b)\big]\\
&=\big[I_{n}(\lambda)K_{n}(\lambda)-I_{n+1}(\lambda)K_{n+1}(\lambda)\big]-\big[I_{n}(\lambda b)K_{n}(\lambda b)-I_{n+1}(\lambda b)K_{n+1}(\lambda b)\big]\\
&:=\varphi_{n}(\lambda)-\varphi_{n}(\lambda b).
\end{align*}
By vitue of \eqref{asymptotic expansion of high order for the product InKn}, we deduce 
$$I_{n}(\lambda)K_{n}(\lambda)\underset{n\rightarrow\infty}{=}\frac{1}{2n}-\frac{\lambda^{2}}{4n^{3}}+o_{\lambda}\left(\frac{1}{n^{4}}\right).$$
Therefore, 
\begin{align*}
\varphi_{n}(\lambda)-\varphi_{n}(\lambda b)&\underset{n\rightarrow\infty}{=}\lambda^{2}(b^{2}-1)\frac{(n+1)^{3}-n^{3}}{4n^{3}(n+1)^{3}}+o_{\lambda,b}\left(\frac{1}{n^{4}}\right)\\
&\underset{n\rightarrow\infty}{=}\frac{3\lambda^{2}(b^{2}-1)}{4n^{4}}+o_{\lambda,b}\left(\frac{1}{n^{4}}\right).
\end{align*}
We conclude that
\begin{equation}\label{Asymptotic expansion for the difference term befor the square root}
\frac{1}{2}\Big(\big[\Omega_{n+1}(\lambda)-\Omega_{n}(\lambda)\big]-\big[\Omega_{n+1}(\lambda b)-\Omega_{n}(\lambda b)\big]\Big)\underset{n\rightarrow\infty}{=}O_{\lambda,b}\left(\frac{1}{n^{4}}\right).
\end{equation}
$\blacktriangleright$ The next task is to look at the asymptotic sign of the difference $\sqrt{\Delta_{n+1}(\lambda,b)}-\sqrt{\Delta_{n}(\lambda,b)}.$
We can write 
$$\sqrt{\Delta_{n+1}(\lambda,b)}-\sqrt{\Delta_{n}(\lambda,b)}=\frac{\Delta_{n+1}(\lambda,b)-\Delta_{n}(\lambda,b)}{\sqrt{\Delta_{n+1}(\lambda,b)}+\sqrt{\Delta_{n}(\lambda,b)}}$$
with 
\begin{align*}
\Delta_{n+1}(\lambda,b)-\Delta_{n}(\lambda,b)&=b\Big(\Omega_{n+1}(\lambda)-\Omega_{n}(\lambda)+\Omega_{n+1}(\lambda b)-\Omega_{n}(\lambda b)\Big)\\
&\quad\times\Big(b\big[\Omega_{n+1}(\lambda)+\Omega_{n}(\lambda)+\Omega_{n+1}(\lambda b)+\Omega_{n}(\lambda b)\big]-2(1+b^{2})\Lambda_{1}(\lambda,b)\Big)\\
&\quad+4b^{2}\Big(\Lambda_{n}(\lambda,b)-\Lambda_{n+1}(\lambda,b)\Big)\Big(\Lambda_{n}(\lambda,b)+\Lambda_{n+1}(\lambda,b)\Big).
\end{align*}
By using \eqref{asymptotic expansion of high order for the product InKn}, we have 
$$\Lambda_{n}(\lambda,b)\underset{n\rightarrow\infty}{=}\frac{b^{n}}{2n}+\frac{\lambda^{2}b^{n}(b^{2}-1)}{2n^{2}}+o_{\lambda,b}\left(\frac{b^{n}}{n^{2}}\right).$$
Hence, the following asymptotic expansion holds 
$$\Lambda_{n}(\lambda,b)\pm\Lambda_{n+1}(\lambda,b)\underset{n\rightarrow\infty}{=}o_{\lambda,b}\left(\frac{1}{n^{2}}\right).$$
As a consequence,
\begin{equation}\label{asymptotic expansion 2}
4b^{2}\Big(\Lambda_{n}(\lambda,b)-\Lambda_{n+1}(\lambda,b)\Big)\Big(\Lambda_{n}(\lambda,b)+\Lambda_{n+1}(\lambda,b)\Big)\underset{n\rightarrow\infty}{=}o_{\lambda,b}\left(\frac{1}{n^{2}}\right).
\end{equation}
In addition,
\begin{equation}\label{asymptotic behaviour 3}
b\Big(\Omega_{n+1}(\lambda)-\Omega_{n}(\lambda)+\Omega_{n+1}(\lambda b)-\Omega_{n}(\lambda b)\Big)=b\Big(\varphi_{n}(\lambda)+\varphi_{n}(\lambda b)\Big)\underset{n\rightarrow\infty}{\sim}\frac{b}{n^{2}}
\end{equation}
and
\begin{equation}\label{asymptotic behaviour 4}
\begin{array}{l}
b\Big[\Omega_{n+1}(\lambda)+\Omega_{n}(\lambda)+\Omega_{n+1}(\lambda b)+\Omega_{n}(\lambda b)\Big]-2(1+b^{2})\Lambda_{1}(\lambda,b)\\
=2b\Big[I_{1}(\lambda)K_{1}(\lambda)+I_{1}(\lambda b)K_{1}(\lambda b)\Big]-2(1+b^{2})I_{1}(\lambda b)K_{1}(\lambda)\\
\mbox{\hspace{1cm}}-b\Big[I_{n+1}(\lambda)K_{n+1}(\lambda)+I_{n+1}(\lambda b)K_{n+1}(\lambda b)+I_{n}(\lambda)K_{n}(\lambda)+I_{n}(\lambda b)K_{n}(\lambda b)\Big]\\
\underset{n\rightarrow\infty}{\longrightarrow}2\delta_{\infty}(\lambda,b),
\end{array}
\end{equation}
where $\delta_{\infty}(\lambda,b)$ is defined in \eqref{definition of deltainfty}. From \eqref{convergence of Deltan towards Deltainfty}, \eqref{definition of deltainfty}, \eqref{asymptotic expansion 2}, \eqref{asymptotic behaviour 3} and \eqref{asymptotic behaviour 4}, we obtain
\begin{equation}\label{asymptotic expansion for difference square root}
\sqrt{\Delta_{n+1}(\lambda,b)}-\sqrt{\Delta_{n}(\lambda,b)}\underset{n\rightarrow\infty}{\sim}\frac{b}{n^{2}}.
\end{equation}
$\blacktriangleright$ Combining \eqref{Asymptotic expansion for the difference term befor the square root} and \eqref{asymptotic expansion for difference square root}, we get
$$\Omega_{n+1}^{\pm}(\lambda,b)-\Omega_{n}^{\pm}(\lambda,b)\underset{n\rightarrow\infty}{\sim}\pm\frac{1}{2n^{2}}.$$
We conclude that there exists $N(\lambda,b)\geqslant N_{0}(\lambda,b)$ such that
$$\forall n\in\mathbb{N}^{*},\quad n\geqslant N(\lambda,b)\Rightarrow\left\lbrace\begin{array}{l}
\Omega_{n+1}^{+}(\lambda,b)-\Omega_{n}^{+}(\lambda,b)>0\\
\Omega_{n+1}^{-}(\lambda,b)-\Omega_{n}^{-}(\lambda,b)<0,
\end{array}\right.$$
i.e. the sequence $\left(\Omega_{n}^{+}(\lambda,b)\right)_{n\geqslant N(\lambda,b)}$ $\Big($resp. $\left(\Omega_{n}^{-}(\lambda,b)\right)_{n\geqslant N(\lambda,b)}\Big)$ is strictly increasing (resp. decreasing). This achieves the proof of Proposition \ref{proposition monotonicity of the eigenvalues}.
\end{proof}
We shall now study both important asymptotic behaviours 
$$\lambda\rightarrow 0\quad\mbox{and}\quad b\rightarrow 0.$$
The first one corresponds to the Euler case and the second one corresponds to the simply-connected case. We remark that we formally recover (at least partially) \cite[Thm. B.]{HHMV16} and \cite[Thm. 5.1.]{DHR19} looking at these limits. More precisely, we have the following result.
\begin{lem}\label{lem continuity spectrum}
	The spectrum is continuous in the following sense.
	\begin{enumerate}[label=(\roman*)]
		\item Let $b\in(0,1).$ There exists $\widetilde{N}(b)$ such that 
		$$\forall n\in\mathbb{N}^*,\,n\geqslant\widetilde{N}(b)\,\Rightarrow\,\Omega_{n}^{\pm}(\lambda,b)\underset{\lambda\rightarrow0}{\longrightarrow}\Omega_{n}^{\pm}(b),$$
		where $\Omega_{n}^{\pm}(b)$ is defined in \eqref{def eig Euler}.
		\item Let $\lambda>0.$ There exists $\widetilde{N}(\lambda)$ such that 
		$$\forall n\in\mathbb{N}^*,\,n\geqslant\widetilde{N}(\lambda)\,\Rightarrow\,\Omega_{n}^{+}(\lambda,b)\underset{b\rightarrow0}{\longrightarrow}\Omega_{n}(\lambda),$$ 
		where $\Omega_{n}(\lambda)$ is defined in \eqref{def eig sc QGSW}.
	\end{enumerate}
\end{lem}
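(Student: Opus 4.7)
The plan is to take limits directly in the explicit formula \eqref{definition af the angular velocities}, exploiting that once the discriminant $\Delta_n(\lambda,b)$ of \eqref{def deltan} stays strictly positive, $\Omega_n^\pm$ depends continuously on the parameters. The main tool is the small-argument asymptotics of modified Bessel functions: for $n\geq 1$ one has $I_n(x)\sim (x/2)^n/n!$ and $K_n(x)\sim\tfrac{(n-1)!}{2}(2/x)^n$ as $x\to 0^+$, and in particular $I_n(x)K_n(x)\to \tfrac{1}{2n}$. The two subtleties are checking positivity of $\Delta_n$ near the limit point and selecting the correct sign when extracting the square root.

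For part (i), as $\lambda\to 0^+$ with $b\in(0,1)$ fixed, a term-by-term computation yields $\Omega_n(\lambda),\Omega_n(\lambda b)\to (n-1)/(2n)$, $\Lambda_1(\lambda,b)\to b/2$ and $\Lambda_n(\lambda,b)\to b^n/(2n)$. Substitution in \eqref{def deltan} gives
\[
\Delta_n(\lambda,b)\;\underset{\lambda\to 0}{\longrightarrow}\;\frac{b^2}{n^2}\left[\left(\frac{n(1-b^2)}{2}-1\right)^{\!2}-b^{2n}\right],
\]
which is strictly positive as soon as $n(1-b^2)/2-1>b^n$; the smallest such $n$ defines $\widetilde N(b)$. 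By continuity, $\Delta_n(\lambda,b)>0$ on a right-neighborhood of $\lambda=0$, so $\Omega_n^\pm(\lambda,b)$ is well defined there. Passing to the limit in each term of \eqref{definition af the angular velocities} produces precisely $\Omega_n^\pm(b)$ of \eqref{def eig Euler}.

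For part (ii), the prefactor $1/b$ in \eqref{definition af the angular velocities} forces a first-order expansion in $b$. From $I_1(\lambda b)\sim \lambda b/2$ one gets $\Lambda_1(\lambda,b)/b\to \lambda K_1(\lambda)/2$, and $\Lambda_n(\lambda,b)=O(b^n)$, while $\Omega_n(\lambda b)\to (n-1)/(2n)$. Setting
\[
A_n(\lambda):=\Omega_n(\lambda)+\tfrac{n-1}{2n}-\tfrac{\lambda K_1(\lambda)}{2},
\]
a careful expansion of \eqref{def deltan} yields $\Delta_n(\lambda,b)=b^2 A_n(\lambda)^2+o(b^2)$ as $b\to 0$, the contribution $4b^2\Lambda_n^2=O(b^{2n+2})$ being absorbed in the remainder. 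As $n\to\infty$, $A_n(\lambda)\to K_1(\lambda)\bigl[I_1(\lambda)-\lambda/2\bigr]+\tfrac{1}{2}$, which is strictly positive because $I_1(\lambda)-\lambda/2=\sum_{m\geq 1}(\lambda/2)^{2m+1}/\bigl(m!(m+1)!\bigr)>0$. This fixes $\widetilde N(\lambda)$ such that $A_n(\lambda)>0$ for $n\geq \widetilde N(\lambda)$, hence $\Delta_n>0$ for small $b>0$ and $\sqrt{\Delta_n(\lambda,b)}/(2b)\to A_n(\lambda)/2$. Inserting all limits in \eqref{definition af the angular velocities} and simplifying produces exact cancellations leading to $\Omega_n^+(\lambda,b)\to \Omega_n(\lambda)$.

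The main technical obstacle is the $b\to 0$ regime: the $1/b$ factor requires expansions to the correct order, and the sign of $A_n(\lambda)$ must be controlled to identify the $+$ branch with the simply-connected spectrum \eqref{def eig sc QGSW}. The opposite branch $\Omega_n^-(\lambda,b)$ converges to $\lambda K_1(\lambda)/2-(n-1)/(2n)$, which has no simply-connected counterpart; this is consistent with the statement of the lemma recording only the limit of $\Omega_n^+$.
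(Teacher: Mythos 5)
Your proposal is correct, and the overall strategy is the same as the paper's: pass to the limit in the explicit formula \eqref{definition af the angular velocities} using the small-argument asymptotics \eqref{asymptotic expansion of small argument}, identify the limiting discriminant, and define $\widetilde N$ from its positivity. Your limiting values ($\Lambda_1/b\to\lambda K_1(\lambda)/2$, $\Lambda_n=O(b^n)$, $\Omega_n(\lambda b)\to\tfrac{n-1}{2n}$, $\Delta_n(\lambda,b)=b^2A_n(\lambda)^2+o(b^2)$) and the sign analysis of $A_n(\lambda)$ all check out; your positivity argument via $I_1(\lambda)-\lambda/2>0$ is just a rearrangement of the paper's argument via $\varphi(x)=xK_1(x)\in(0,1)$.

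The one genuine difference is that you work at fixed $n$ throughout, whereas the paper invests substantial effort in \emph{uniform-in-$n$} estimates: in part (i) it proves $\sup_{n\geq 2}|I_n(\lambda)K_n(\lambda)-\tfrac{1}{2n}|\leqslant\lambda^2$ via the integral representation \eqref{form-Lebe0} combined with \eqref{Besse-repr}, and $\sup_n|I_n(\lambda b)K_n(\lambda)-\tfrac{b^n}{2n}|\lesssim\max(|\log\lambda|,1)\lambda^2$ via a logarithmic-potential identity, so as to obtain $\sup_n|\Delta_n(\lambda,b)-\Delta_n(b)|\to 0$ and a lower bound $\inf_{\lambda\leqslant\lambda_0}\inf_{n\geqslant\widetilde N(b)}\Delta_n(\lambda,b)\geqslant c_0/2$; a similar uniform expansion \eqref{equiv disc b0} is derived in part (ii). For the lemma as stated (each fixed $n\geqslant\widetilde N(b)$, limit in the parameter), your pointwise continuity argument suffices and is noticeably shorter; what the paper's uniformity buys is a threshold and a discriminant lower bound that are uniform over $n$ and over the parameter near the limit, which is stronger information than the statement records. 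Two small housekeeping points you should make explicit: the monotonicity of $n\mapsto 1+b^n-\tfrac{n(1-b^2)}{2}$, so that the condition defining $\widetilde N(b)$ persists for all larger $n$; and, as you do note, that positivity of $\Delta_n(\lambda,b)$ near the limiting parameter value is what makes $\Omega_n^{\pm}$ well defined there, since Lemma \ref{lemma existence of eigenvalues} only provides it for $n\geqslant N_0(\lambda,b)$.
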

\begin{proof}
\textbf{(i)} In view of \eqref{asymptotic expansion of small argument}, we deduce 
\begin{equation}\label{lim in lbd of Lbd}
	\forall\, n\in\mathbb{N}^{*},\quad \forall\, b\in(0,1],\quad I_{n}(\lambda b)K_{n}(\lambda)\underset{\lambda\rightarrow 0}{\longrightarrow}\frac{b^{n}}{2n}.
\end{equation}
In what follows, we fix $b\in(0,1).$ By virtue of \eqref{lim in lbd of Lbd}, the matrices $M_{n}$ defined in Proposition \ref{proposition linearized operator}, satisfy the following convergence
$$\forall n\in\mathbb{N}^*,\quad M_{n}(\lambda,b,\Omega)\underset{\lambda\rightarrow 0}{\longrightarrow}M_{n}(b,\Omega):=\left(\begin{array}{cc}
	\frac{n-1}{2n}-\frac{b^{2}}{2}-\Omega & \frac{b^{n+1}}{2n}\\
	-\frac{b^{n}}{2n} & \frac{b}{2}-\frac{b(n-1)}{2n}-b\Omega 
\end{array}\right).$$
After straightforward computations, we find 
$$\det\big(M_{n}(b,\Omega)\big)=b\Omega^{2}-\frac{b(1-b^{2})}{2}\Omega+\frac{b}{4n^{2}}\left[n(1-b^{2})-1+b^{2n}\right].$$
This polynomial of degree two in $\Omega$ has the  discriminant 
$$\Delta_{n}(b):=\frac{b^{2}}{n^{2}}\left[\left(\frac{n(1-b^{2})}{2}-1\right)^{2}-b^{2n}\right].$$
Thus, provided $\Delta_{n}(b)>0$, i.e. for 
\begin{equation}\label{cond-edc}
	1+b^{n}-\frac{n(1-b^{2})}{2}<0,
\end{equation}
we have two roots 
$$\Omega_{n}^{\pm}(b):=\frac{1-b^{2}}{4}\pm\frac{1}{2n}\sqrt{\left(\frac{n(1-b^{2})}{2}-1\right)^{2}-b^{2n}}.$$
Then, we recover the result found in \cite[Thm. B.]{HHMV16}. Now, observe that the sequence $n\mapsto 1+b^{n}-\frac{n(1-b^{2})}{2}$ is decreasing. Then there exists $\widetilde{N}(b)\in\mathbb{N}^*$ and $c_0>0$ such that
$$\inf_{n\in\mathbb{N}^*\atop n\geqslant\widetilde{N}(b)}\Delta_{n}(b)\geqslant c_0>0.$$ 
We use the integral representation \eqref{form-Lebe0}, allowing  to write
$$\forall n\in\mathbb{N}^*,\quad I_n(\lambda)K_n(\lambda)-\tfrac{1}{2n}=\tfrac{1}{2}\int_0^\infty\Big[J_0\left(2\lambda\sinh\big(\tfrac{t}{2}\big)\right)-1\Big]e^{-nt}dt.$$
Now using the integral representation \eqref{Besse-repr}, we find
$$J_0\left(2\lambda\sinh\big(\tfrac{t}{2}\big)\right)-1=\tfrac{1}{\pi}\int_0^\pi\Big[\cos\big(2\lambda\sinh\left(\tfrac{t}{2}\right)\sin(\theta)\big)-1\Big]d\theta.$$
The classical inequalities
$$\forall x\in\mathbb{R},\quad |\cos(x)-1|\leqslant\tfrac{x^2}{2}\quad\textnormal{and}\quad\sinh(x)\leqslant\tfrac{e^x}{2}$$
provide the following estimate for $t\geqslant0$
$$\Big|J_0\left(2\lambda\sinh\big(\tfrac{t}{2}\big)\right)-1\Big|\leqslant \lambda^2e^{t}.$$
We conclude that
\begin{equation}\label{unif n cv InKn}
	\forall\lambda>0,\quad\sup_{n\in\mathbb{N}\setminus\{0,1\}}\big|I_n(\lambda)K_n(\lambda)-\tfrac{1}{2n}\big|\leqslant\lambda^2.
\end{equation}
On the other hand, we set for $\varepsilon>0$,
$$K_{0}^{\varepsilon}(x)=K_0(\varepsilon x)+\log\left(\tfrac{\varepsilon}{2}\right).$$
Remark that \eqref{explicit form for K0} implies
$$\lim_{\varepsilon\rightarrow 0}K_{0}^{\varepsilon}(x)=-\log\left(\tfrac{x}{2}\right)-\boldsymbol{\gamma}.$$
By the dominated convergence theorem, one has
$$\forall n\in\mathbb{N}^*,\quad\lim_{\varepsilon\rightarrow 0}\int_{\mathbb{T}}K_{0}^{\varepsilon}(|1-be^{\ii\theta}|)\cos(n\theta)d\eta=-\int_{\mathbb{T}}\log(|1-be^{\ii\theta}|)\cos(n\theta)d\theta.$$
Now one obtains from \eqref{simpl int}
\begin{align*}
	\forall n\in\mathbb{N}^*,\quad\int_{\mathbb{T}}K_{0}^{\varepsilon}(|1-be^{\ii\theta}|)\cos(n\theta)d\eta&=\int_{\mathbb{T}}K_{0}(\varepsilon|1-be^{\ii\theta}|)\cos(n\theta)d\theta\\
	&=I_{n}(\varepsilon b)K_{n}(\varepsilon).
\end{align*}
Putting together the last two equality with \eqref{lim in lbd of Lbd} yields
$$\forall n\in\mathbb{N}^*,\quad\int_{\mathbb{T}}\log\big(|1-be^{\ii\theta}|\big)d\theta=-\tfrac{b^n}{2n}.$$
Added to \eqref{simpl int}, we have
$$\forall \lambda>0,\quad\forall n\in\mathbb{N}^*,\quad I_n(\lambda b)K_n(\lambda)-\tfrac{b^{n}}{2n}=\int_{\mathbb{T}}\Big[K_0\big(\lambda|1-be^{\ii\theta}|\big)+\log\big(|1-be^{\ii\theta}|\big)\Big]\cos(n\theta)d\theta.$$
Then, making appeal to the power series decompositions \eqref{explicit form for K0} and \eqref{definition of modified Bessel function of first kind}, we get
\begin{equation}\label{unif n cv Lbd}
	\forall \lambda>0,\quad\sup_{n\in\mathbb{N}^*}\big|I_n(\lambda b)K_n(\lambda)-\tfrac{b^{n}}{2n}\big|\lesssim\max(|\log(\lambda)|,1)\lambda^2.
\end{equation}
Combining \eqref{def deltan}, \eqref{unif n cv InKn}, \eqref{unif n cv Lbd} and \eqref{lim in lbd of Lbd} one obtains 
$$\sup_{n\in\mathbb{N}^*}\big|\Delta_{n}(\lambda,b)-\Delta_{n}(b)\big|\underset{\lambda\rightarrow 0}{\longrightarrow}0.$$
Hence, there exists $\lambda_0(b)>0$ such that
$$\inf_{\lambda\in(0,\lambda_0(b)]}\inf_{n\in\mathbb{N}^*\atop n\geqslant\widetilde{N}(b)}\Delta_{n}(\lambda,b)\geqslant \tfrac{c_0}{2}>0.$$
Therefore, we deduce from \eqref{definition af the angular velocities} and \eqref{lim in lbd of Lbd} that,
$$\forall n\in\mathbb{N}^*,\quad n\geqslant\widetilde{N}(b)\,\Rightarrow\,\Omega_{n}^{\pm}(\lambda,b)\underset{\lambda\rightarrow 0}{\longrightarrow}\Omega_{n}^{\pm}(b).$$

\noindent \textbf{(ii)} In what follows, we fix $\lambda>0.$ By using the asymptotic \eqref{asymptotic expansion of small argument}, we find 
$$\frac{\Lambda_{1}(\lambda,b)}{b}\underset{b\rightarrow 0}{\longrightarrow}\frac{\lambda K_{1}(\lambda)}{2}\quad\textnormal{and}\quad\forall n\in\mathbb{N}^*,\,\Lambda_{n}(\lambda,b)\underset{b\rightarrow 0}{\sim}\tfrac{\left(\lambda b\right)^{n}}{2^nn!}K_n(\lambda).$$
Using the power series decomposition \eqref{definition of modified Bessel function of first kind}, the decay property of $\lambda\mapsto I_n(\lambda)K_n(\lambda)$ and the asymptotic \eqref{lim in lbd of Lbd}, we get
$$\forall n\in\mathbb{N}^*,\quad\big|I_{n}(\lambda b)K_n(\lambda)-\tfrac{\left(\lambda b\right)^{n}}{2^nn!}K_n(\lambda)\big|\leqslant b^2I_n(\lambda)K_n(\lambda)\leqslant b^2.$$
Thus, we obtain from \eqref{def deltan}, \eqref{unif n cv InKn} and \eqref{lim in lbd of Lbd}
\begin{equation}\label{equiv disc b0}
	\sup_{n\in\mathbb{N}^*}\Big|\Delta_{n}(\lambda,b)-b^2\Big[\big(\Omega_{n}(\lambda)+\tfrac{n-1}{2n}-\tfrac{\lambda K_{1}(\lambda)}{2}\big)^2-\tfrac{\left(\lambda b\right)^{2n}}{2^{2n}(n!)^2}K_n^2(\lambda)\Big]\Big|\underset{b\to 0}{\longrightarrow}0.
\end{equation}
Notice that
$$\Omega_{n}(\lambda)+\tfrac{n-1}{2n}-\tfrac{\lambda K_{1}(\lambda)}{2}\underset{n\rightarrow\infty}{\longrightarrow}I_1(\lambda)K_1(\lambda)+\tfrac{1-\lambda K_{1}(\lambda)}{2}.$$
Consider the function $\varphi$ defined by $\forall x>0,\varphi(x)=x K_{1}(x).$ From \eqref{Bessel derivatives}, we get 
$$\varphi'(x)=K_{1}(x)+x K_{1}'(x)=-x K_{0}(x)<0.$$
Hence $\varphi$ is strictly decreasing on $(0,\infty).$ Moreover, in view of the asymptotic \eqref{asymptotic expansion of small argument}, we infer 
$$\lim_{x\rightarrow 0}\varphi(x)=1.$$
Thus, using also \eqref{symmetry Bessel}, we obtain
$$\forall x>0,\quad\varphi(x)\in(0,1).$$
Therefore, we deduce that there exists $\widetilde{N}(\lambda)\in\mathbb{N}^*$ such that
$$\forall n\in\mathbb{N}^*,\quad n\geqslant\widetilde{N}(\lambda)\,\Rightarrow\,\Omega_{n}(\lambda)+\tfrac{n-1}{2n}-\tfrac{\lambda K_{1}(\lambda)}{2}>0.$$
In addition, using \eqref{asymptotic expansion of high order} and up to increase the value of $\widetilde{N}(\lambda)$ one gets
$$\forall n\in\mathbb{N}^*,\quad n\geqslant\widetilde{N}(\lambda)\,\Rightarrow\,\tfrac{\left(\lambda b\right)^{2n}}{2^{2n}(n!)^2}K_n^2(\lambda)\leqslant 1.$$
Coming back to \eqref{equiv disc b0}, we infer the existence of $b_0(\lambda)\in(0,1)$ such that
$$\forall b\in(0,b_0(\lambda)),\quad\forall n\in\mathbb{N}^*,\quad n\geqslant\widetilde{N}(\lambda)\,\Rightarrow\,\Delta_{n}(\lambda,b)>0.$$
Thus, we get from \eqref{definition af the angular velocities}
$$\forall n\in\mathbb{N}^*,\quad n\geqslant\widetilde{N}(\lambda)\,\Rightarrow\,\Omega_{n}^{+}(\lambda,b)\underset{b\rightarrow 0}{\longrightarrow}\Omega_{n}(\lambda).$$
Then, we partially recover the result found in \cite[Thm. 5.1.]{DHR19}. We also obtain, up to increase the value of $\widetilde{N}(\lambda)$,
$$\forall n\in\mathbb{N}^*,\quad n\geqslant\widetilde{N}(\lambda)\,\Rightarrow\,\Omega_{n}^{-}(\lambda,b)\underset{b\rightarrow 0}{\longrightarrow}\Omega_{n}^{-}(\lambda):=\frac{\lambda nK_{1}(\lambda)-n+1}{2n}.$$
Unfortunately, we cannot prove bifurcation from these eigenvalues.
\end{proof}

\section{Bifurcation from simple eigenvalues}
We prove here the following result which implies the main Theorem \ref{main theorem} by a direct application of Crandall-Rabinowitz's Theorem \ref{Crandall-Rabinowitz theorem}. 
\begin{prop}\label{proposition assumptions of CR theorem}
Let $\lambda>0,$ $b\in(0,1),$ $\alpha\in(0,1)$ and $\mathbf{m}\in\mathbb{N}^{*}$ such that $\mathbf{m}\geqslant N(\lambda,b).$ Then the following assertions hold true.
\begin{enumerate}[label=(\roman*)]
\item There exists $r>0$ such that $G(\lambda,b,\cdot,\cdot,\cdot):\mathbb{R}\times B_{r,\mathbf{m}}^{1+\alpha}\times B_{r,\mathbf{m}}^{1+\alpha}\rightarrow Y_{\mathbf{m}}^{\alpha}$ is well-defined and of class $C^{1}.$
\item The kernel $\ker\Big(DG\big(\lambda,b,\Omega_{\mathbf{m}}^{\pm}(\lambda,b),0,0\big)\Big)$ is one-dimensional and generated by 
$$v_{0,\mathbf{m}}:\begin{array}[t]{rcl}
\mathbb{T} & \rightarrow & \mathbb{C}^{2}\\
w & \mapsto & \left(\begin{array}{c}
b\big[\Omega_{\mathbf{m}}(\lambda b)+\Omega_{\mathbf{m}}^{\pm}(\lambda,b)\big]-\Lambda_{1}(\lambda,b)\\
-\Lambda_{\mathbf{m}}(\lambda,b)
\end{array}\right)\overline{w}^{\mathbf{m}-1}.
\end{array}$$
\item The range $R\Big(DG\big(\lambda,b,\Omega_{\mathbf{m}}^{\pm}(\lambda,b),0,0\big)\Big)$ is closed and of codimension one in $Y_{\mathbf{m}}^{\alpha}.$
\item Tranversality condition :
$$\partial_{\Omega}DG\big(\lambda,b,\Omega_{\mathbf{m}}^{\pm}(\lambda,b),0,0\big)(v_{0,\mathbf{m}})\not\in R\Big(DG\big(\lambda,b,\Omega_{\mathbf{m}}^{\pm}(\lambda,b),0,0\big)\Big).$$
\end{enumerate}
\end{prop}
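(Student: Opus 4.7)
My plan is to verify the four Crandall--Rabinowitz hypotheses in turn, exploiting the Fourier-multiplier structure from Proposition \ref{proposition linearized operator} together with the spectral results of Section \ref{sec asym vp}.

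\textbf{Item (i).} The $C^1$ regularity of $G:\mathbb{R}\times B_r^{1+\alpha}\times B_r^{1+\alpha}\to Y^\alpha$ is already established by Proposition \ref{proposition regularity of the functional}. To pass to the $\mathbf{m}$-fold restricted setting it suffices to check the $\mathbf{m}$-fold equivariance: if $(f_1,f_2)\in X_{1,\mathbf{m}}^{1+\alpha}\times X_{1,\mathbf{m}}^{1+\alpha}$, then $\Phi_j(e^{2\ii\pi/\mathbf{m}}z)=e^{2\ii\pi/\mathbf{m}}\Phi_j(z)$, and the changes of variables $w\mapsto e^{2\ii\pi/\mathbf{m}}w$ and $\tau\mapsto e^{2\ii\pi/\mathbf{m}}\tau$ in \eqref{definition of Fj}--\eqref{def S} show that $G_j(\lambda,b,\Omega,f_1,f_2)$ is invariant under this rotation, hence its image lies in $Y_{\mathbf{m}}^\alpha$. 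Restriction of a $C^1$ map to closed subspaces remains $C^1$.

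\textbf{Item (ii) — Kernel.} On the $\mathbf{m}$-fold space, Proposition \ref{proposition linearized operator} specializes to
$$DG(\lambda,b,\Omega,0,0)(h_1,h_2)(w)=\sum_{k=1}^\infty\mathbf{m} k\,M_{\mathbf{m} k}(\lambda,b,\Omega)\binom{a_{\mathbf{m} k-1}}{b_{\mathbf{m} k-1}}e_{\mathbf{m} k}(w),$$
so the kernel decomposes mode-by-mode as the direct sum of $\ker M_{\mathbf{m} k}(\lambda,b,\Omega_\mathbf{m}^\pm)$ across $k\geqslant 1$. For $k=1$, the identity $\det M_\mathbf{m}(\lambda,b,\Omega_\mathbf{m}^\pm)=0$ holds by construction, and solving the second row of $M_\mathbf{m} v=0$ yields the one-dimensional kernel spanned by $\big(b[\Omega_\mathbf{m}(\lambda b)+\Omega_\mathbf{m}^\pm]-\Lambda_1,\,-\Lambda_\mathbf{m}\big)$, matching $v_{0,\mathbf{m}}$; non-triviality follows from $\Lambda_\mathbf{m}(\lambda,b)=I_\mathbf{m}(\lambda b)K_\mathbf{m}(\lambda)>0$. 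For $k\geqslant 2$, the strict monotonicity from Proposition \ref{proposition monotonicity of the eigenvalues} gives $\Omega_\mathbf{m}^\pm\notin\{\Omega_{\mathbf{m} k}^+,\Omega_{\mathbf{m} k}^-\}$, so $M_{\mathbf{m} k}(\lambda,b,\Omega_\mathbf{m}^\pm)$ is invertible and contributes trivially.

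\textbf{Item (iii) — Range.} Let $w_\mathbf{m}:=\big(\Lambda_\mathbf{m},\,\Omega_\mathbf{m}(\lambda)-\Omega_\mathbf{m}^\pm-b\Lambda_1\big)^T\in\ker M_\mathbf{m}^T$; then $\operatorname{Im}M_\mathbf{m}$ is the line $\{x\in\mathbb{R}^2:w_\mathbf{m}\cdot x=0\}$. Consequently the range of $DG(\lambda,b,\Omega_\mathbf{m}^\pm,0,0)$ is the codimension-one closed subspace of $Y_\mathbf{m}^\alpha$ cut out by the single continuous linear constraint $w_\mathbf{m}\cdot\widehat{g}(\mathbf{m})=0$ on the $e_\mathbf{m}$ Fourier coefficient. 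Closedness follows from the uniform boundedness of $M_{\mathbf{m} k}^{-1}$ for $k\geqslant 2$: indeed $M_{\mathbf{m} k}$ converges to a limit matrix whose determinant roots are $\Omega_\infty^\pm\neq\Omega_\mathbf{m}^\pm$, so a standard Fredholm-type analysis of the Fourier-multiplier inversion produces a preimage in $X_\mathbf{m}^{1+\alpha}$ for any admissible right-hand side.

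\textbf{Item (iv) — Transversality.} Reading $\partial_\Omega M_n=\operatorname{diag}(-1,-b)$ directly from the explicit form of $M_n$, we obtain
$$\partial_\Omega DG(\lambda,b,\Omega_\mathbf{m}^\pm,0,0)(v_{0,\mathbf{m}})(w)=\mathbf{m}\binom{-v_1}{-bv_2}e_\mathbf{m}(w),\qquad (v_1,v_2)=\big(b[\Omega_\mathbf{m}(\lambda b)+\Omega_\mathbf{m}^\pm]-\Lambda_1,-\Lambda_\mathbf{m}\big).$$
Non-membership in the range reduces to non-vanishing of $w_\mathbf{m}\cdot(-v_1,-bv_2)$. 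Substituting the values of $v_1,v_2$ and using the definition $B_\mathbf{m}=(1-b^2)\Lambda_1+b[\Omega_\mathbf{m}(\lambda)-\Omega_\mathbf{m}(\lambda b)]$ from \eqref{expression of the determinant}, the pairing simplifies to $\Lambda_\mathbf{m}\big(B_\mathbf{m}-2b\Omega_\mathbf{m}^\pm\big)=\mp\Lambda_\mathbf{m}\sqrt{\Delta_\mathbf{m}(\lambda,b)}$, which is strictly nonzero for $\mathbf{m}\geqslant N(\lambda,b)$ since $\Lambda_\mathbf{m}>0$ and $\Delta_\mathbf{m}>0$.

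\textbf{Main obstacle.} The delicate part is the combined use of the strict monotonicity of the eigenvalue branches (needed to collapse the kernel at higher Fourier modes in item (ii)) and the strict positivity of $\Delta_\mathbf{m}$ (needed for the transversality pairing to be nonzero in item (iv)); both hinge on the Bessel asymptotics of Section \ref{sec asym vp} and both force the threshold $\mathbf{m}\geqslant N(\lambda,b)$. Without this threshold, the kernel could become multi-dimensional (collision of eigenvalues across modes) or the transversality pairing could degenerate ($\Delta_\mathbf{m}=0$, the regime studied in \cite{HM16-2} for Euler).
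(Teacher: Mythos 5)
Your items (i), (ii) and (iv) are correct and essentially follow the paper's strategy. In (iv) you in fact take a slightly cleaner route: you pair $\partial_{\Omega}DG(v_{0,\mathbf{m}})$ against the left null vector $w_{\mathbf{m}}\in\ker M_{\mathbf{m}}^{T}$ and reduce non-degeneracy to the single identity $w_{\mathbf{m}}\cdot(-v_{1},-bv_{2})=\mp\Lambda_{\mathbf{m}}\sqrt{\Delta_{\mathbf{m}}(\lambda,b)}\neq 0$, whereas the paper argues that membership in the range forces the vector to be proportional to a column of $M_{\mathbf{m}}$, derives the condition \eqref{condition for partialOmegaDG to be in the range}, and excludes the two resulting cases. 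Both arguments hinge on exactly the same facts, $\Lambda_{\mathbf{m}}(\lambda,b)>0$ and $\Delta_{\mathbf{m}}(\lambda,b)>0$, and your version is arguably more transparent.

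The genuine gap is in item (iii). Identifying the candidate range as the closed codimension-one subspace of $Y_{\mathbf{m}}^{\alpha}$ cut out by $w_{\mathbf{m}}\cdot\widehat{g}(\mathbf{m})=0$ is correct, and so is the formal inversion $\binom{a_{n}}{b_{n}}=\frac{1}{n\mathbf{m}}M_{n\mathbf{m}}^{-1}\binom{\mathscr{A}_{n}}{\mathscr{B}_{n}}$ for $n\geqslant 2$. But the claim that ``uniform boundedness of $M_{\mathbf{m}k}^{-1}$'' together with ``a standard Fredholm-type analysis of the Fourier-multiplier inversion'' produces a preimage in $X_{\mathbf{m}}^{1+\alpha}$ is not a proof: a uniformly bounded sequence of Fourier multipliers does not define a bounded operator on $C^{\alpha}(\mathbb{T})$ (H\"older spaces are not characterized by the size of Fourier coefficients), and even with the extra factor $\frac{1}{n\mathbf{m}}$ the gain of one derivative in the H\"older scale must be established by hand. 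This is the bulk of the paper's proof of (iii): one expands $\det M_{n\mathbf{m}}(\lambda,b,\Omega_{\mathbf{m}}^{\pm}(\lambda,b))=d_{\infty}+\widetilde{d}_{\infty}/n+O(n^{-3})$ with $d_{\infty}\neq 0$ (itself a consequence of Proposition \ref{proposition monotonicity of the eigenvalues}, since $d_{\infty}=b[\Omega_{\infty}^{+}-\Omega_{\mathbf{m}}^{\pm}][\Omega_{\infty}^{-}-\Omega_{\mathbf{m}}^{\pm}]<0$), splits $1/\det M_{n\mathbf{m}}$ accordingly, and writes the preimage as a sum of: terms with absolutely summable Fourier coefficients of size $O(n^{-3})$; convolutions of $L^{1}(\mathbb{T})$ kernels built from $\Lambda_{n\mathbf{m}}$ and $I_{n\mathbf{m}}K_{n\mathbf{m}}$ with $C^{1+\alpha}$ functions, via the convolution law \eqref{convol}; and antiderivative-type series $\sum_{n}\mathscr{A}_{n}w^{n}/n$ whose $C^{1+\alpha}$ regularity is obtained from the continuity of the Szeg\"o projection on $C^{\alpha}(\mathbb{T})$ and term-by-term differentiation justified by Bernstein's theorem. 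Without some version of this regularity argument your item (iii) --- and hence the applicability of Crandall--Rabinowitz --- is not established.
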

\begin{proof}
\textbf{(i)} Follows from Proposition \ref{proposition regularity of the functional}.\\
\textbf{(ii)} Let $(h_{1},h_{2})\in X_{\mathbf{m}}^{1+\alpha}.$  We write
\begin{equation}\label{form of h1 and h2 with symmetries}
h_{1}(w)=\sum_{n=1}^{\infty}a_{n}\overline{w}^{n\mathbf{m}-1}\quad\mbox{ and }\quad h_{2}(w)=\sum_{n=1}^{\infty}b_{n}\overline{w}^{n\mathbf{m}-1}.
\end{equation}
Proposition \ref{proposition linearized operator} gives 
\begin{equation}\label{form of DG with symmetries}
\forall w\in\mathbb{T},\quad DG(\lambda,b,\Omega,0,0)(h_{1},h_{2})(w)=\sum_{n=1}^{\infty}n\mathbf{m}M_{n\mathbf{m}}(\lambda,b,\Omega)\left(\begin{array}{c}
a_{n}\\
b_{n}
\end{array}\right)e_{n\mathbf{m}}(w).
\end{equation}
For $\Omega\in\left\lbrace\Omega_{\mathbf{m}}^{-}(\lambda,b),\Omega_{\mathbf{m}}^{+}(\lambda,b)\right\rbrace,$ we have 
$$\det\Big(M_{\mathbf{m}}\big(\lambda,b,\Omega_{\mathbf{m}}^{\pm}(\lambda,b)\big)\Big)=0.$$
Thus, the kernel of $DG\big(\lambda,\Omega_{\mathbf{m}}^{\pm}(\lambda,b),0,0\big)$ is non trivial and it is one dimensional if and only if 
\begin{equation}\label{non zero determinant}
\forall n\in\mathbb{N}^{*},\quad n\geqslant 2\Rightarrow\det\Big(M_{n\mathbf{m}}\big(\lambda,b,\Omega_{\mathbf{m}}^{\pm}(\lambda,b)\big)\Big)\neq 0.
\end{equation}
The previous condition is satisfied in view of Proposition \ref{proposition monotonicity of the eigenvalues}.
Hence, we have the equivalence 
$$(h_{1},h_{2})\in\ker\Big(DG\big(\lambda,b,\Omega_{\mathbf{m}}^{\pm}(\lambda,b),0,0\big)\Big)\Leftrightarrow\left\lbrace\begin{array}{l}
\forall n\in\mathbb{N}^{*},\quad n\geqslant 2\Rightarrow a_{n}=0=b_{n}\\
\left(\begin{array}{c}
a_{1}\\
b_{1}
\end{array}\right)\in\ker\Big(M_{\mathbf{m}}\big(\lambda,b,\Omega_{\mathbf{m}}^{\pm}(\lambda,b)\big)\Big).
\end{array}\right.$$
Therefore, we can select as generator of $\ker\Big(DG\big(\lambda,b,\Omega_{\mathbf{m}}^{\pm}(\lambda,b),0,0\big)\Big)$ the following pair of functions 
$$v_{0,\mathbf{m}}:\begin{array}[t]{rcl}
\mathbb{T} & \rightarrow & \mathbb{C}^{2}\\
w & \mapsto & \left(\begin{array}{c}
b\big[\Omega_{\mathbf{m}}(\lambda b)+\Omega_{\mathbf{m}}^{\pm}(\lambda,b)\big]-\Lambda_{1}(\lambda,b)\\
-\Lambda_{\mathbf{m}}(\lambda,b)
\end{array}\right)\overline{w}^{\mathbf{m}-1}.
\end{array}$$
\textbf{(iii)} We consider the set $Z_{\mathbf{m}}$ defined by
\begin{align*}
Z_{\mathbf{m}} & :=  \displaystyle\left\lbrace g=(g_{1},g_{2})\in Y_{\mathbf{m}}^{\alpha}\quad\textnormal{s.t.}\quad \forall w\in\mathbb{T},\quad g(w)=\sum_{n=1}^{\infty}\left(\begin{array}{c}
\mathscr{A}_{n}\\
\mathscr{B}_{n}
\end{array}\right)e_{n\mathbf{m}}(w),\right.\\
&  \mbox{\hspace{1cm}}\forall n\in\mathbb{N}^{*},\quad\left.(\mathscr{A}_{n},\mathscr{B}_{n})\in\mathbb{R}^{2}\quad\mbox{and}\quad\exists(a_{1},b_{1})\in\mathbb{R}^{2},\quad M_{\mathbf{m}}\big(\lambda,b,\Omega_{\mathbf{m}}^{\pm}(\lambda,b)\big)\left(\begin{array}{c}
a_{1}\\
b_{1}
\end{array}\right)=\left(\begin{array}{c}
\mathscr{A}_{1}\\
\mathscr{B}_{1}
\end{array}\right)\right\rbrace.
\end{align*}
Clearly, $Z_{\mathbf{m}}$ is a closed sub-vector space of codimension one in $Y_{\mathbf{m}}^{\alpha}.$ It remains to prove that it coincides with the range of $DG\big(\lambda,b,\Omega_{\mathbf{m}}^{\pm}(\lambda,b),0,0\big).$ Obviously, we have the inclusion
$$R\Big(DG\big(\lambda,b,\Omega_{\mathbf{m}}^{\pm}(\lambda,b),0,0\big)\Big)\subset Z_{\mathbf{m}}.$$
We are left to prove the converse inclusion. Let $(g_{1},g_{2})\in Z_{\mathbf{m}}.$ We shall prove that the equation 
$$DG\big(\lambda,b,\Omega_{\mathbf{m}}^{\pm}(\lambda,b),0,0\big)(h_{1},h_{2})=(g_{1},g_{2})$$ 
admits a solution $(h_{1},h_{2})\in X_{\mathbf{m}}^{1+\alpha}$ in the form \eqref{form of h1 and h2 with symmetries}. According to \eqref{form of DG with symmetries}, the previous equation is equivalent to the following countable set of equations 
$$\forall n\in\mathbb{N}^{*},\quad n\mathbf{m}M_{n\mathbf{m}}\big(\lambda,b,\Omega_{\mathbf{m}}^{\pm}(\lambda,b)\big)\left(\begin{array}{c}
a_{n}\\
b_{n}
\end{array}\right)=\left(\begin{array}{c}
\mathscr{A}_{n}\\
\mathscr{B}_{n}
\end{array}\right).$$
For $n=1,$ the existence follows from the definition of $Z_{\mathbf{m}}.$ Thanks to \eqref{non zero determinant}, the sequences $(a_{n})_{n\geqslant 2}$ and $(b_{n})_{n\geqslant 2}$ are uniquely determined by
$$\forall n\in\mathbb{N}^{*},\quad n\geqslant 2\Rightarrow\left(\begin{array}{c}
a_{n}\\
b_{n}
\end{array}\right)=\frac{1}{n\mathbf{m}}M_{n\mathbf{m}}^{-1}\big(\lambda,b,\Omega_{\mathbf{m}}^{\pm}(\lambda,b)\big)\left(\begin{array}{c}
\mathscr{A}_{n}\\
\mathscr{B}_{n}
\end{array}\right),$$
or equivalently,
$$\left\lbrace\begin{array}{rcl}
a_{n} & = & \displaystyle\frac{\Lambda_{1}(\lambda,b)-b\big[\Omega_{n\mathbf{m}}(\lambda b)+\Omega_{\mathbf{m}}^{\pm}(\lambda,b)\big]}{n\mathbf{m}\det\Big(M_{n\mathbf{m}}\big(\lambda,b,\Omega_{\mathbf{m}}^{\pm}(\lambda,b)\big)\Big)}\mathscr{A}_{n}-\frac{b\Lambda_{n\mathbf{m}}(\lambda,b)}{n\mathbf{m}\det\Big(M_{n\mathbf{m}}\big(\lambda,b,\Omega_{\mathbf{m}}^{\pm}(\lambda,b)\big)\Big)}\mathscr{B}_{n}\\
& & \\
b_{n} & = & \displaystyle\frac{\Lambda_{n\mathbf{m}}(\lambda,b)}{n\mathbf{m}\det\Big(M_{n\mathbf{m}}\big(\lambda,b,\Omega_{\mathbf{m}}^{\pm}(\lambda,b)\big)\Big)}\mathscr{A}_{n}+\frac{\Omega_{n\mathbf{m}}(\lambda b)+\Omega_{\mathbf{m}}^{\pm}(\lambda,b)-b\Lambda_{1}(\lambda,b)}{n\mathbf{m}\det\Big(M_{n\mathbf{m}}\big(\lambda,b,\Omega_{\mathbf{m}}^{\pm}(\lambda,b)\big)\Big)}\mathscr{B}_{n}.
\end{array}\right.$$
It remains to prove the regularity, that is $(h_{1},h_{2})\in X_{\mathbf{m}}^{1+\alpha}.$ For that purpose, we show 
$$w\mapsto\left(\begin{array}{c}
h_{1}(w)-a_{1}\overline{w}^{\mathbf{m}-1}\\
h_{2}(w)-a_{2}\overline{w}^{\mathbf{m}-1}
\end{array}\right)\in C^{1+\alpha}(\mathbb{T})\times C^{1+\alpha}(\mathbb{T}).$$
We may focus on the first component, the second one being analogous. We set
$$H_{1}(\lambda,b,\mathbf{m})(w):=\sum_{n=2}^{\infty}\frac{\mathscr{A}_{n}}{n\det\Big(M_{n\mathbf{m}}\big(\lambda,b,\Omega_{\mathbf{m}}^{\pm}(\lambda,b)\big)\Big)}w^{n},\quad H_{2}(w):=\sum_{n=2}^{\infty}\frac{\mathscr{B}_{n}}{n}w^{n}$$
and
$$\mathscr{G}_{1}(\lambda,b,\mathbf{m})(w):=\sum_{n=2}^{\infty}I_{n\mathbf{m}}(\lambda b)K_{n\mathbf{m}}(\lambda b)w^{n},\quad \mathscr{G}_{2}(\lambda,b,\mathbf{m})(w):=\sum_{n=2}^{\infty}\frac{\Lambda_{n\mathbf{m}}(\lambda,b)}{\det\Big(M_{n\mathbf{m}}\big(\lambda,b,\Omega_{\mathbf{m}}^{\pm}(\lambda,b)\big)\Big)}w^{n}.$$
If we denote $\widetilde{h}_{1}(w):=h_{1}(w)-a_{1}\overline{w}^{\mathbf{m}-1}$, then we can write 
\begin{align}\label{dec h1t}
\widetilde{h}_{1}(w)= & C_{1}(\lambda,b,\mathbf{m})wH_{1}(\lambda,b,\mathbf{m})\left(\overline{w}^{\mathbf{m}}\right)\nonumber\\
&+C_{2}(b,\mathbf{m})w(\mathscr{G}_{1}(\lambda,b,\mathbf{m})\ast H_{1}(\lambda,b,\mathbf{m}))\left(\overline{w}^{\mathbf{m}}\right)\nonumber\\
&+C_{2}(b,\mathbf{m})w(\mathscr{G}_{2}(\lambda,b,\mathbf{m})\ast H_{2})\left(\overline{w}^{\mathbf{m}}\right),
\end{align}
where
\begin{align*}
C_{1}(\lambda,b,\mathbf{m}) & :=  \displaystyle\frac{\Lambda_{1}(\lambda,b)-b\Omega_{\mathbf{m}}^{\pm}(\lambda,b)-bI_{1}(\lambda b)K_{1}(\lambda b)}{\mathbf{m}},\\
C_{2}(b,\mathbf{m}) & :=  \displaystyle -\frac{b}{\mathbf{m}}.
\end{align*}
The convolution must be understood in the usual sens, that is 
$$\forall w=e^{\ii\theta}\in\mathbb{T},\quad f\ast g(w)=\fint_{\mathbb{T}}f(\tau)g(w\overline{\tau})\frac{d\tau}{\tau}=\frac{1}{2\pi}\int_{0}^{2\pi}f\left(e^{\ii\eta}\right)g\left(e^{\ii(\theta-\eta)}\right)d\eta.$$
We shall use the classical convolution law
\begin{equation}\label{convol}
	L^{1}(\mathbb{T})\ast C^{1+\alpha}(\mathbb{T})\hookrightarrow C^{1+\alpha}(\mathbb{T}).
\end{equation}
By using the decay property of the product $I_{n}K_{n}$ and the asymptotic \eqref{asymptotic expansion of small argument}, we have 
$$\| \mathscr{G}_{1}(\lambda,b,\mathbf{m})\|_{L^{1}(\mathbb{T})}\lesssim\| \mathscr{G}_{1}(\lambda,b,\mathbf{m})\|_{L^{2}(\mathbb{T})}=\left(\sum_{n=2}^{\infty}I_{n\mathbf{m}}^{2}(\lambda b)K_{n\mathbf{m}}^{2}(\lambda b)\right)^{\frac{1}{2}}\leqslant\frac{1}{2\mathbf{m}}\left(\sum_{n=2}^{\infty}\frac{1}{n^{2}}\right)^{\frac{1}{2}}<\infty.$$
We also have 
$$\| \mathscr{G}_{2}(\lambda,b,\mathbf{m})\|_{L^{1}(\mathbb{T})}\leqslant\| \mathscr{G}_{2}(\lambda,b,\mathbf{m})\|_{L^{\infty}(\mathbb{T})}\lesssim \sum_{n=2}^{\infty}b^{n\mathbf{m}}<\infty.$$
Hence
\begin{equation}\label{int G1G2}
	\Big(\mathscr{G}_{1}(\lambda,b,\mathbf{m}),\mathscr{G}_{2}(\lambda,b,\mathbf{m})\Big)\in\left(L^{1}(\mathbb{T})\right)^{2}.
\end{equation}
We now prove that $H_{1}$ and $H_{2}$ are with regularity $C^{1+\alpha}(\mathbb{T}).$\\
$\blacktriangleright$ \textit{Regularity of $H_{2}$ :}\\
First observe that by Cauchy-Schwarz inequality and the embedding $C^{\alpha}(\mathbb{T})(\hookrightarrow L^{\infty}(\mathbb{T}))\hookrightarrow L^{2}(\mathbb{T})$, we have 
\begin{align}\label{est H2}
\| H_{2}\|_{L^{\infty}(\mathbb{T})} & \leqslant  \displaystyle\sum_{n=2}^{\infty}\frac{|\mathscr{B}_{n}|}{n}\nonumber\\
& \leqslant  \displaystyle\left(\sum_{n=2}^{\infty}\frac{1}{n^{2}}\right)^{\frac{1}{2}}\left(\sum_{n=2}^{\infty}\mathscr{|B}_{n}|^{2}\right)^{\frac{1}{2}}\nonumber\\
& \lesssim  \| g_{2}\|_{L^{2}(\mathbb{T})}\nonumber\\
& \lesssim  \| g_{2}\|_{C^{\alpha}(\mathbb{T})}.
\end{align}
We now have to prove that $H_{2}'\in C^{\alpha}(\mathbb{T}).$ We show that it coincides, up to slight modifications, with $g_{2}$ which is of regularity $C^{\alpha}(\mathbb{T}).$ For that purpose, we show that we can differentiate $H_{2}$ term by term.\\
We denote $(S_{N})_{N\geqslant 2}$ (resp. $(R_{N})_{N\geqslant 2}$) the sequence of the partial sums (resp. the sequence of the remainders) of the series of functions $H_{2}.$ One has
$$R_{N}(w)=\sum_{n=N+1}^{\infty}\frac{\mathscr{B}_{n}}{n}w^{n}.$$
Using Cauchy-Schwarz inequality, we obtain similarly to \eqref{est H2}
\begin{align*}
	\|R_{N}\|_{L^{\infty}(\mathbb{T})}\leqslant\left(\sum_{n=N+1}^{\infty}\frac{1}{n^{2}}\right)^{\frac{1}{2}}\|g_{2}\|_{C^{\alpha}(\mathbb{T})}\underset{N\rightarrow\infty}{\longrightarrow}0.
\end{align*}
Hence
\begin{equation}\label{CVU SN}
	\|S_{N}-H_{2}\|_{L^{\infty}(\mathbb{T})}\underset{N\rightarrow\infty}{\longrightarrow}0.
\end{equation}
One has
$$S_{N}'(w)=\overline{w}\sum_{n=2}^{N}\mathscr{B}_{n}w^{n}:=\overline{w}g_{2}^{N}(w).$$
We set 
$$g_{2}^{+}(w):=\sum_{n=2}^{\infty}\mathscr{B}_{n}w^{n}.$$
By continuity of the Szegö projection defined by
$$\Pi:\sum_{n\in\mathbb{Z}}\alpha_{n}w^{n}\mapsto\sum_{n\in\mathbb{N}}\alpha_{n}w^{n}$$
from $C^{\alpha}(\mathbb{T})$ into itself (see \cite{HH15} for more details) added to the fact that $g_{2}\in C^{\alpha}(\mathbb{T})$, we deduce that $g_{2}^{+}\in C^{\alpha}(\mathbb{T}).$ Applying Bernstein Theorem of Fourier series gives that $g_{2}^{+}$ is the uniform limit of its Fourier series, namely
\begin{equation}\label{CVU SN'}
	\| S_{N}'- \overline{w}g_{2}^{+}\|_{L^{\infty}(\mathbb{T})}\underset{N\rightarrow\infty}{\longrightarrow}0.
\end{equation}
Gathering \eqref{CVU SN} and \eqref{CVU SN'}, we conclude that we can differentiate $H_{2}$ term by term and get
$$H_{2}'(\omega)=\overline{w}g_{2}^{+}(w).$$
As a consequence,
\begin{equation}\label{reg H2}
	H_{2}\in C^{1+\alpha}(\mathbb{T}).
\end{equation}
$\blacktriangleright$ \textit{Regularity of $H_{1}(\lambda,b,\mathbf{m})$ :}\\
By using \eqref{expression of the determinant} and \eqref{asymptotic expansion of high order for the product InKn}, we have the asymptotic expansion
\begin{equation}\label{asymptotic expansion of det(Mnm)}
\det\left(M_{n\mathbf{m}}(\lambda,b,\Omega_{\mathbf{m}}^{\pm}(\lambda,b))\right)\underset{n\rightarrow \infty}{=}d_{\infty}(\lambda,b,\mathbf{m})+\frac{\widetilde{d}_{\infty}(\lambda,b,\mathbf{m})}{n}+O_{\lambda,b,\mathbf{m}}\left(\frac{1}{n^{3}}\right),
\end{equation}
with, using Proposition \ref{proposition monotonicity of the eigenvalues},
\begin{align*}
	d_{\infty}(\lambda,b,\mathbf{m})&:=\left[I_{1}(\lambda)K_{1}(\lambda)-\Omega_{\mathbf{m}}^{\pm}(\lambda,b)-b\Lambda_{1}(\lambda,b)\right]\left[\Lambda_{1}(\lambda,b)-b\Omega_{\mathbf{m}}^{\pm}(\lambda,b)-bI_{1}(\lambda b)K_{1}(\lambda b)\right]\\
	&=b\left[\Omega_{\infty}^{+}(\lambda,b)-\Omega_{\mathbf{m}}^{\pm}(\lambda,b)\right]\left[\Omega_{\infty}^{-}(\lambda,b)-\Omega_{\mathbf{m}}^{\pm}(\lambda,b)\right]\\
	&<0
\end{align*}
and, using \eqref{definition of deltainfty},
\begin{align*}
\widetilde{d}_{\infty}(\lambda,b,\mathbf{m}) & :=  \displaystyle\frac{b}{2\mathbf{m}}\left[I_{1}(\lambda)K_{1}(\lambda)-\Omega_{\mathbf{m}}^{\pm}(\lambda,b)-b\Lambda_{1}(\lambda,b)\right]-\frac{1}{2\mathbf{m}}\left[\Lambda_{1}(\lambda,b)-b\Omega_{\mathbf{m}}^{\pm}(\lambda,b)-bI_{1}(\lambda b)K_{1}(\lambda b)\right]\\
& =  \displaystyle\frac{b\left(I_{1}(\lambda)K_{1}(\lambda)+I_{1}(\lambda b)K_{1}(\lambda b)\right)-(1+b^{2})\Lambda_{1}(\lambda,b)}{2\mathbf{m}}\\
& =  \displaystyle\frac{\delta_{\infty}(\lambda,b)}{2\mathbf{m}}.
\end{align*}
We denote
\begin{equation}\label{def rn}
	r_{n}(\lambda,b,\mathbf{m}):=\det\left(M_{n\mathbf{m}}(\lambda,b,\Omega_{\mathbf{m}}^{\pm}(\lambda,b))\right)-d_{\infty}(\lambda,b,\mathbf{m})\underset{n\rightarrow\infty}{=}\frac{\widetilde{d}_{\infty}(\lambda,b,\mathbf{m})}{n}+O_{\lambda,b,\mathbf{m}}\left(\frac{1}{n^{3}}\right).
\end{equation}
We can write
$$\frac{1}{\det\Big(M_{n\mathbf{m}}\big(\lambda,b,\Omega_{\mathbf{m}}^{\pm}(\lambda,b)\big)\Big)}=\frac{r_{n}^{2}(\lambda,b,\mathbf{m})}{d_{\infty}^{2}(\lambda,b,\mathbf{m})\det\Big(M_{n\mathbf{m}}\big(\lambda,b,\Omega_{\mathbf{m}}^{\pm}(\lambda,b)\big)\Big)}-\frac{r_{n}(\lambda,b,\mathbf{m})}{d_{\infty}^{2}(\lambda,b,\mathbf{m})}+\frac{1}{d_{\infty}(\lambda,b,\mathbf{m})}.$$
Thus we can write
\begin{align}\label{dec H1}
H_{1}(\lambda,b,\mathbf{m})(w)&= \displaystyle\frac{1}{d_{\infty}^{2}(\lambda,b,\mathbf{m})}\sum_{n=2}^{\infty}\frac{\mathscr{A}_{n}r_{n}^{2}(\lambda,b,\mathbf{m})}{n\det\Big(M_{n\mathbf{m}}\big(\lambda,b,\Omega_{\mathbf{m}}^{\pm}(\lambda,b)\big)\Big)}w^{n}-\frac{1}{d_{\infty}^{2}(\lambda,b,\mathbf{m})}\sum_{n=2}^{\infty}\frac{\mathscr{A}_{n}r_{n}(\lambda,b,\mathbf{m})}{n}w^{n}\nonumber\\
&\quad+\frac{1}{d_{\infty}(\lambda,b,\mathbf{m})}\sum_{n=2}^{\infty}\frac{\mathscr{A}_{n}}{n}w^{n}\nonumber\\
&:=\displaystyle \frac{1}{d_{\infty}^{2}(\lambda,b,\mathbf{m})}H_{1,1}(\lambda,b,\mathbf{m})(w)-\frac{1}{d_{\infty}^{2}(\lambda,b,\mathbf{m})}H_{1,2}(\lambda,b,\mathbf{m})(w)+\frac{1}{d_{\infty}(\lambda,b,\mathbf{m})}H_{1,3}(\lambda,b,\mathbf{m})(w).
\end{align}
Now since $(\mathscr{A}_{n})_{n\in\mathbb{N}^{*}}\in l^{2}(\mathbb{N}^{*})\subset l^{\infty}(\mathbb{N}^{*}),$ we have 
$$\left|\frac{\mathscr{A}_{n}r_{n}^{2}(\lambda,b,\mathbf{m})}{n\det\Big(M_{n\mathbf{m}}\big(\lambda,b,\Omega_{\mathbf{m}}^{\pm}(\lambda,b)\big)\Big)}\right|\underset{n\rightarrow\infty}{=}O_{\lambda,b,\mathbf{m}}\left(\frac{1}{n^{3}}\right).$$
By using the link regularity/decay of Fourier coefficients, we deduce that 
\begin{equation}\label{reg H11}
	H_{1,1}(\lambda,b,\mathbf{m})\in C^{1+\alpha}(\mathbb{T}).
\end{equation}
Similarly to \eqref{reg H2}, we can obtain
\begin{equation}\label{reg H13}
	H_{1,3}(\lambda,b,\mathbf{m})\in C^{1+\alpha}(\mathbb{T}).
\end{equation}
By the same method, we can also differentiate term by term $H_{1,2}(\lambda,b,\mathbf{m})$ and obtain
$$\forall w\in\mathbb{T},\quad \big(H_{1,2}(\lambda,b,\mathbf{m})\big)'(w)=\overline{w}\sum_{n=2}^{\infty}\mathscr{A}_{n}r_{n}(\lambda,b,\mathbf{m})w^{n}.$$
Notice that from \eqref{def rn}, we can write
$$\forall w\in\mathbb{T},\quad w\big(H_{1,2}(\lambda,b,\mathbf{m})\big)'(w)=\widetilde{d}_{\infty}(\lambda,b,\mathbf{m})H_{1,3}(\lambda,b,\mathbf{m})+(\mathscr{C}\ast g_{1}^{+})(w),$$
where
\begin{align*}
	\forall w\in\mathbb{T},\quad g_{1}^{+}(w):=\sum_{n=2}^{\infty}\mathscr{A}_{n}w^{n}\quad\textnormal{and}\quad 
	\mathscr{C}(w):=\sum_{n=2}^{\infty}\mathscr{C}_{n}w^{n}\quad\textnormal{with}\quad \mathscr{C}_{n}=O_{\lambda,b,\mathbf{m}}\left(\frac{1}{n^{3}}\right).
\end{align*}
Using again the continuity of the Szegö projection, we have 
\begin{equation}\label{reg g1 C}
	g_{1}^{+}\in C^{1+\alpha}(\mathbb{T})\subset L^{\infty}(\mathbb{T})\subset L^{1}(\mathbb{T})\quad\textnormal{and}\quad \mathscr{C}\in C^{1+\alpha}(\mathbb{T}).
	\end{equation}
Using \eqref{reg H13}, \eqref{reg g1 C} and \eqref{convol}, we deduce that
$$\big(H_{1,2}(\lambda,b,\mathbf{m})\big)'\in C^{1+\alpha}(\mathbb{T})\subset C^{\alpha}(\mathbb{T}).$$ 
Thus 
\begin{equation}\label{reg H12}
	H_{1,2}(\lambda,b,\mathbf{m})\in C^{1+\alpha}(\mathbb{T}).
\end{equation}
Gathering \eqref{reg H11}, \eqref{reg H12} and \eqref{reg H13}, we conclude that
\begin{equation}\label{reg H1}
	H_{1}(\lambda,b,\mathbf{m})\in C^{1+\alpha}(\mathbb{T}).
\end{equation}
Putting together \eqref{dec h1t}, \eqref{reg H1}, \eqref{reg H2}, \eqref{int G1G2} and \eqref{convol}, we finally conclude
$$\widetilde{h}_{1}\in C^{1+\alpha}(\mathbb{T}).$$
\noindent\textbf{(iv)} $\Omega_{\mathbf{m}}^{\pm}(\lambda,b)$ is a simple eigenvalue since $\Delta_{\mathbf{m}}(\lambda,b)>0.$ From \eqref{split GON} and \eqref{DOj}, we deduce
$$\left\lbrace\begin{array}{l}
\partial_{\Omega}DG_{1}\big(\lambda,b,\Omega_{\mathbf{m}}^{\pm}(\lambda,b),0,0\big)(h_{1},h_{2})(w)=\mbox{Im}\left\lbrace\overline{h_{1}'(w)}+\overline{w}h_{1}(w)\right\rbrace=-\displaystyle\sum_{n=0}^{\infty}n\mathbf{m}a_{n}e_{n\mathbf{m}}(w)\\
\partial_{\Omega}DG_{2}\big(\lambda,b,\Omega_{\mathbf{m}}^{\pm}(\lambda,b),0,0\big)(h_{1},h_{2})(w)=b\mbox{Im}\left\lbrace\overline{h_{2}'(w)}+\overline{w}h_{2}(w)\right\rbrace=-\displaystyle\sum_{n=0}^{\infty}bn\mathbf{m}b_{n}e_{n\mathbf{m}}(w).
\end{array}\right.$$
Thus,
$$\partial_{\Omega}DG\big(\lambda,b,\Omega_{\mathbf{m}}^{\pm}(\lambda,b),0,0\big)(v_{0,\mathbf{m}})(w)=\mathbf{m}\left(\begin{array}{c}
\Lambda_{1}(\lambda,b)-b\big[\Omega_{\mathbf{m}}(\lambda b)+\Omega_{\mathbf{m}}^{\pm}(\lambda,b)\big]\\
b\Lambda_{\mathbf{m}}(\lambda,b)
\end{array}\right)e_{\mathbf{m}}(w).$$
Notice that the previous expression belongs to the range of $DG\big(\lambda,b,\Omega_{\mathbf{m}}^{\pm}(\lambda,b),0,0\big)$ if and only if the vector  
$$\left(\begin{array}{c}
\Lambda_{1}(\lambda,b)-b\big[\Omega_{\mathbf{m}}(\lambda b)+\Omega_{\mathbf{m}}^{\pm}(\lambda,b)\big]\\
b\Lambda_{\mathbf{m}}(\lambda,b)
\end{array}\right)$$
is a scalar multiple of one column of the matrix $M_{\mathbf{m}}\big(\lambda,b,\Omega_{\mathbf{m}}^{\pm}(\lambda,b)\big).$ This occurs if and only if
\begin{equation}\label{condition for partialOmegaDG to be in the range}
\Big(\Lambda_{1}(\lambda,b)-b\big[\Omega_{\mathbf{m}}(\lambda b)+\Omega_{\mathbf{m}}^{\pm}(\lambda,b)\big]\Big)^{2}-b^{2}\Lambda_{\mathbf{m}}^{2}(\lambda,b)=0.
\end{equation}
Putting \eqref{condition for partialOmegaDG to be in the range} together with $\det\Big(M_{\mathbf{m}}\big(\lambda,b,\Omega_{\mathbf{m}}^{\pm}(\lambda,b)\big)\Big)=0$ implies 
$$\Big(\Lambda_{1}(\lambda,b)-b\big[\Omega_{\mathbf{m}}(\lambda b)+\Omega_{\mathbf{m}}^{\pm}(\lambda,b)\big]\Big)\Big((1-b^{2})\Lambda_{1}(\lambda,b)+b\big[\Omega_{\mathbf{m}}(\lambda)-\Omega_{\mathbf{m}}(\lambda b)\big]-2b\Omega_{\mathbf{m}}^{\pm}(\lambda,b)\Big)=0.$$
Now remark that the above equation is equivalent to
$$\Lambda_{1}(\lambda,b)-b\big[\Omega_{\mathbf{m}}(\lambda b)+\Omega_{\mathbf{m}}^{\pm}(\lambda,b)\big]=0\quad\mbox{or}\quad\Omega_{\mathbf{m}}^{\pm}(\lambda,b)=\frac{1}{2b}\Big((1-b^{2})\Lambda_{1}(\lambda,b)+b\big[\Omega_{\mathbf{m}}(\lambda)-\Omega_{\mathbf{m}}(\lambda b)\big]\Big).$$
Since $b\neq 0$ and $\Lambda_{\mathbf{m}}(\lambda,b)\neq 0$, then in view of \eqref{condition for partialOmegaDG to be in the range}, the first equation can't be solved. Then, necessary, the second equation must be satisfied. But we notice that it corresponds to a multiple eigenvalue ($\Delta_{\mathbf{m}}(\lambda,b)=0$), which is excluded here. Therefore, we conclude that 
$$\partial_{\Omega}DG\big(\lambda,b,\Omega_{\mathbf{m}}^{\pm}(\lambda,b),0,0\big)(v_{0,\mathbf{m}})\not\in R\Big(DG\big(\lambda,b,\Omega_{\mathbf{m}}^{\pm}(\lambda,b),0,0\big)\Big).$$
This ends the proof of Proposition \ref{proposition assumptions of CR theorem}.
\end{proof}

\appendix
\section{Formulae on modified Bessel functions}\label{appendix Bessel}
We shall collect some useful  information on  modified Bessel functions. For more details we refer to \cite{AS64,W95}.
We define first the Bessel functions of order $\nu\in\mathbb{C}$ by 
$$J_{\nu}(z)=\sum_{m=0}^{\infty}\frac{(-1)^{m}\left(\frac{z}{2}\right)^{\nu +2m}}{m!\Gamma(\nu+m+1)},\quad|\mbox{arg}(z)|<\pi.$$
Notice that when $\nu\in\mathbb{N}$ we have the following integral representation, see \cite[p. 115]{Leb65}.
\begin{equation}\label{Besse-repr}
	J_\nu(x)=\frac{1}{\pi}\int_0^\pi\cos\big(x \sin \theta-\nu \theta\big) d\theta.
\end{equation}
We define the Bessel functions of imaginary argument by
\begin{equation}\label{definition of modified Bessel function of first kind}
I_{\nu}(z)=\sum_{m=0}^{\infty}\frac{\left(\frac{z}{2}\right)^{\nu+2m}}{m!\Gamma(\nu+m+1)},\mbox{ }\quad|\mbox{arg}(z)|<\pi
\end{equation}
and $$K_{\nu}(z)=\frac{\pi}{2}\frac{I_{-\nu}(z)-I_{\nu}(z)}{\sin(\nu\pi)},\mbox{ }\quad\nu\in\mathbb{C}\backslash\mathbb{Z},\mbox{ }\quad|\mbox{arg}(z)|<\pi.$$
For $n\in\mathbb{Z},$ we define $K_{n}(z)=\displaystyle\lim_{\nu\rightarrow n}K_{\nu}(z).$
We give now useful properties of modified Bessel functions.\\ 
\textbf{Symmetry and positivity properties} (see \cite[p. 375]{AS64}) \textbf{:}
\begin{equation}\label{symmetry Bessel}
\forall n\in\mathbb{N},\quad\forall\lambda>0,\quad I_{-n}(\lambda)=I_{n}(\lambda)>0\quad\mbox{ and }\quad K_{-n}(\lambda)=K_{n}(\lambda)>0.
\end{equation}
\textbf{Derivatives} (see \cite[p. 376]{AS64}) \textbf{:}\\
If we set $\mathcal{Z}_{\nu}(z)=I_{\nu}(z)$ or $e^{i\nu\pi}K_{\nu}(z)$, then for all $\nu\in\mathbb{R}$, we have 
\begin{equation}\label{Bessel derivatives}
	\mathcal{Z}_{\nu}'(z)=\mathcal{Z}_{\nu-1}(z)-\frac{\nu}{z}\mathcal{Z}_{\nu}(z)=\mathcal{Z}_{\nu+1}(z)+\frac{\nu}{z}\mathcal{Z}_{\nu}(z).
\end{equation}
\textbf{Power series extension for $K_{n}$} (see \cite[p. 375]{AS64}) \textbf{:}\\
\begin{align*}
	K_{n}(z)=&\frac{1}{2}\left(\frac{z}{2}\right)^{-n}\sum_{k=0}^{n-1}\frac{(n-k-1)!}{k!}\left(\frac{-z}{4}\right)^{k}+(-1)^{n+1}\ln\left(\frac{z}{2}\right)I_{n}(z)\\
	&+\frac{1}{2}\left(\frac{-z}{2}\right)^{n}\sum_{k=0}^{\infty}\left(\psi(k+1)+\psi(n+k+1)\right)\frac{\left(\frac{z^{2}}{4}\right)^{k}}{k!(n+k)!},
\end{align*}
where 
$$\psi(1)=-\boldsymbol{\gamma} \textnormal{ (Euler's constant)}\quad \textnormal{and}\quad  \forall m\in\mathbb{N}^{*},\,\,\psi(m+1)=\displaystyle\sum_{k=1}^{m}\frac{1}{k}-\boldsymbol{\gamma}.$$
In particular 
\begin{equation}\label{explicit form for K0}
K_{0}(z)=-\log\left(\frac{z}{2}\right)I_{0}(z)+\sum_{m=0}^{\infty}\frac{\left(\frac{z}{2}\right)^{2m}}{(m!)^{2}}\psi(m+1),
\end{equation}
so $K_{0}$ behaves like a logarithm at $0.$\\

\noindent\textbf{Decay property for the product $I_{\nu}K_{\nu}$} (see \cite{B09} and \cite{DHR19}) \textbf{:}\\ The application $(\lambda,\nu)\mapsto I_{\nu}(\lambda)K_{\nu}(\lambda)$ is strictly decreasing in each variable $(\lambda,\nu)\in(\mathbb{R}_{+}^{*})^{2}.$\\

\noindent\textbf{Beltrami's summation formula} (see \cite[p. 361]{W95}) \textbf{:}
Let $0<b<a.$ Then
\begin{equation}\label{Beltrami's summation formula}
\forall \theta\in\mathbb{R},\quad K_{0}\left(\sqrt{a^{2}+b^{2}-2ab\cos(\theta)}\right)=\sum_{m=-\infty}^{\infty}I_{m}(b)K_{m}(a)\cos(m\theta).
\end{equation}

\textbf{Ratio bounds} (see \cite{BP13}) \textbf{:}\\
For all $n\in\mathbb{N},$ for all $\lambda\in\mathbb{R}_{+}^{*},$ we have 
\begin{equation}\label{ratio bounds with derivatives}
\left\lbrace\begin{array}{l}
\displaystyle\frac{\lambda I_{n}'(\lambda)}{I_{n}(\lambda)}<\sqrt{\lambda^{2}+n^{2}}\\
\\
\displaystyle\frac{\lambda K_{n}'(\lambda)}{K_{n}(\lambda)}<-\sqrt{\lambda^{2}+n^{2}}
\end{array}\right.
\end{equation}
\textbf{Integral representation for the product} $I_nK_n$ (see \cite[p. 140]{Leb65}) \textbf{:}
\begin{align}\label{form-Lebe0}
	\forall n\in\mathbb{N}^*,\quad\forall\lambda>0,\quad (I_{n}K_{n})(\lambda)=\frac12\int_{0}^\infty J_0\big(2\lambda\sinh(\tfrac{t}{2})\big)e^{-nt}dt.
\end{align}
\textbf{Asymptotic expension of small argument} (see \cite[p. 375]{AS64}) \textbf{:}
\begin{equation}\label{asymptotic expansion of small argument}
\forall n\in\mathbb{N}^{*},\quad I_{n}(\lambda)\underset{\lambda\rightarrow 0}{\sim}\frac{\left(\frac{1}{2}\lambda\right)^{n}}{\Gamma(n+1)}\quad\mbox{ and }\quad K_{n}(\lambda)\underset{\lambda\rightarrow 0}{\sim}\frac{\Gamma(n)}{2\left(\frac{1}{2}\lambda\right)^{n}}.
\end{equation} 
\textbf{Asymptotic expansion of high order} (see \cite[p. 377]{AS64}) \textbf{:}
\begin{equation}\label{asymptotic expansion of high order}
\forall\lambda>0,\quad\mbox{ }I_{\nu}(\lambda)\underset{\nu\rightarrow\infty}{\sim}\frac{1}{\sqrt{2\pi\nu}}\left(\frac{e\lambda}{2\nu}\right)^{\nu}\quad\mbox{ and }\quad K_{\nu}(\lambda)\underset{\nu\rightarrow\infty}{\sim}\sqrt{\frac{\pi}{2\nu}}\left(\frac{e\lambda}{2\nu}\right)^{-\nu}.
\end{equation}
\textbf{Asymptotic expansion of high order for the product $I_{j}K_{j}$} (see \cite{HS11}) \textbf{:}
\begin{equation}\label{asymptotic expansion of high order for the product InKn}
\forall\lambda>0,\quad\forall b\in(0,1],\quad I_{n}(\lambda b)K_{n}(\lambda)\underset{n\rightarrow\infty}{\sim}\frac{b^{n}}{2n}\left(\sum_{m=0}^{\infty}\frac{b_{m}(\lambda b)}{n^{m}}\right)\left(\sum_{m=0}^{\infty}(-1)^{m}\frac{b_{m}(\lambda)}{n^{m}}\right),
\end{equation}
where for each $m\in\mathbb{N}$, $b_{m}(\lambda)$ is a polynomial of degree $m$ in $\lambda^{2}$ defined by 
$$b_{0}(\lambda)=1\quad\mbox{ and }\quad\forall m\in\mathbb{N}^{*},\,\,b_{m}(\lambda)=\sum_{k=1}^{m}(-1)^{m-k}\frac{S(m,k)}{k!}\left(\frac{\lambda^{2}}{4}\right)^{k}$$
and the $S(m,k)$ are Stirling numbers of second kind defined recursively by 
$$\forall (m,k)\in(\mathbb{N}^{*})^{2},\quad S(m,k)=S(m-1,k-1)+kS(m-1,k),$$
with $$S(0,0)=1,\quad\forall m\in\mathbb{N}^{*},\,\,S(m,1)=1\quad\mbox{ and }\quad S(m,0)=0\quad\mbox{ and if }m<k\mbox{ then }S(m,k)=0.$$
\section{Proof of Proposition \ref{proposition regularity of the functional}}\label{appendix proof reg funct}
In this appendix, we prove the regularity result stated in Proposition \ref{proposition regularity of the functional}. The techniques involved are now classical and the following proof follows closely the lines of the proof of \cite[Prop. 4.1]{HHH18}.
\begin{proof}
	\textbf{(i)} The proof proceeds in three steps. The first step is to show the well-posedness of the function $G(\lambda,b,\cdot,\cdot,\cdot)$ from $\mathbb{R}\times B_{r}^{1+\alpha}\times B_{r}^{1+\alpha}$ to $Y^{\alpha}$ for some $r$ small enough. Then, in the second step, we shall prove the existence and give the computation of the Gâteaux derivative of $G(\lambda,b,\cdot,\cdot,\cdot).$ Finally, in the third step, we shall prove that these Gâteaux derivatives are continuous. This will show the $C^{1}$ regularity of $G(\lambda,b,\cdot,\cdot,\cdot).$\\
	
	\noindent $\blacktriangleright$ \textbf{Step 1 : Show that $G(\lambda,b,\cdot,\cdot,\cdot):\mathbb{R}\times B_{r}^{1+\alpha}\times B_{r}^{1+\alpha}\rightarrow Y^{\alpha}$ is well-defined :}\\
	For this purpose, we split $G_{j}$ into two terms, the self-induced term $\mathcal{S}_{j}$ and the interaction term $\mathcal{I}_{j}$, 
	\begin{equation}\label{split GON}
		G_{j}(\lambda,b,\Omega,f_{1},f_{2})=\mathcal{S}_{j}(\lambda,b,\Omega,f_{j})+\mathcal{I}_{j}(\lambda,b,f_{1},f_{2}),
	\end{equation}
	where
	\begin{align*}
		\mathcal{S}_{j}(\lambda,b,\Omega,f_{j})(w)&:=\mbox{Im}\left\lbrace\left[\Omega\Phi_{j}(w)+(-1)^{j}S(\lambda,\Phi_{j},\Phi_{j})(w)\right]\overline{w}\overline{\Phi_{j}'(w)}\right\rbrace,\\
		\mathcal{I}_{j}(\lambda,b,f_{1},f_{2})&:=(-1)^{j-1}\mbox{Im}\left\lbrace S(\lambda,\Phi_{i},\Phi_{j})(w)\overline{w}\overline{\Phi_{j}'(w)}\right\rbrace.
	\end{align*}
	\ding{226} We refer to \cite[Prop. 5.7]{DHR19} for the study of $\mathcal{S}_{j}$. Only the $(-1)^{j}$ defers, but has no consequence. We recall here the results. There exists $r\in(0,1)$ such that for all $\alpha\in(0,1),$ we have 
	\begin{enumerate}[label=\textbullet]
		\item $\mathcal{S}_{j}(\lambda,b,\cdot,\cdot):\mathbb{R}\times B_{r}^{1+\alpha}\rightarrow Y_{1}^{\alpha}$ is of class $C^{1}.$
		\item The restriction $\mathcal{S}_{j}(\lambda,b,\cdot,\cdot):\mathbb{R}\times B_{r,\mathbf{m}}^{1+\alpha}\rightarrow Y_{\mathbf{m}}^{\alpha}$ is well-defined.
	\end{enumerate}
	Moreover, we have 
	\begin{align}\label{DOj}
		D_{f_{j}}&\mathcal{S}_{j}(\lambda,b,\Omega,f_{j})h_{j}(w)  = \Omega\mbox{Im}\left\lbrace h_{j}(w)\overline{w}\overline{\Phi_{j}'(w)}+\Phi_{j}(w)\overline{w}\overline{h_{j}'(w)}\right\rbrace\nonumber\\
		&+(-1)^{j}\mbox{Im}\left\lbrace S(\lambda,\Phi_{j},\Phi_{j})(w)\overline{w}\overline{h_{j}'(w)}+\overline{w}\overline{\Phi_{j}'(w)}\left[A_{1}(\lambda,\Phi_{j},h_{j})(w)+B_{1}(\lambda,\Phi_{j},h_{j})(w)\right]\right\rbrace,
	\end{align}
	where
	\begin{align*}
		A_{1}(\lambda,\Phi_{j},h_{j})(w)&:=\fint_{\mathbb{T}}h_{j}'(\tau)K_{0}\left(\lambda|\Phi_{j}(w)-\Phi_{j}(\tau)|\right)d\tau,\\
		B_{1}(\lambda,\Phi_{j},h_{j})(w)&:=\lambda\fint_{\mathbb{T}}\Phi_{j}'(\tau)K_{0}'\left(\lambda|\Phi_{j}(w)-\Phi_{j}(\tau)|\right)\frac{\mbox{Re}\left(\left(\overline{h_{j}(w)}-\overline{h_{j}(\tau)}\right)\left(\Phi_{j}(w)-\Phi_{j}(\tau)\right)\right)}{|\Phi_{j}(w)-\Phi_{j}(\tau)|}d\tau.
	\end{align*}
	Actually, this is the most difficult part of this proof since in this case, the integrals appearing have singular kernel and the proof uses some results about singular kernels. As we shall see in the remaining of the proof, the terms concerning $\mathcal{I}_{j}$ are not singular.\\
	\ding{226} We shall first show that for $(f_{1},f_{2})\in B_{r}^{1+\alpha}\times B_{r}^{1+\alpha}$, we have $\mathcal{I}_{j}(\lambda,b,f_{1},f_{2})\in C^{\alpha}(\mathbb{T}).$ According to the algebra structure of $C^{\alpha}(\mathbb{T})$, it suffices to show that for $i\neq j,$  $S(\lambda,\Phi_{i},\Phi_{j})\in C^{\alpha}(\mathbb{T}).$
	For that purpose, we consider the operator $\mathcal{T}$ defined by 
	$$\forall w\in\mathbb{T},\quad\mathcal{T}\chi(w):=\fint_{\mathbb{T}}\chi(\tau)K_{0}\left(\lambda|\Phi_{j}(w)-\Phi_{i}(\tau)|\right)d\tau.$$
	But for $w,\tau\in\mathbb{T},$ we have taking $f_{1}$ and $f_{2}$ small functions, 
	$$|\Phi_{1}(w)-\Phi_{2}(\tau)|\leqslant |w-b\tau|+|f_{1}(w)|+|f_{2}(\tau)|\leqslant(1+b)+\| f_{1}\|_{L^{\infty}(\mathbb{T})}+\| f_{2}\|_{L^{\infty}(\mathbb{T})}\leqslant 2(1+b)$$
	and
	$$|\Phi_{1}(w)-\Phi_{2}(\tau)|\geqslant|w-b\tau|-|f_{1}(w)|-|f_{2}(\tau)|\geqslant(1-b)-\| f_{1}\|_{L^{\infty}(\mathbb{T})}-\| f_{2}\|_{L^{\infty}(\mathbb{T})}\geqslant\frac{1-b}{2}.$$
	Since $K_{0}$ is continuous on $\left[\frac{\lambda(1-b)}{2},2\lambda(1+b)\right]$, we have 
	$$\|\mathcal{T}\chi\|_{L^{\infty}(\mathbb{T})}\lesssim\|\chi\|_{L^{\infty}(\mathbb{T})}.$$
	Moreover, taking $w_{1}\neq w_{2}\in\mathbb{T}$, we have by mean value Theorem, since from \eqref{Bessel derivatives} $K_{0}'=-K_{1}$ is continuous on $\left[\frac{\lambda(1-b)}{2},2\lambda(1+b)\right]$, and left triangle inequality 
	\begin{align*}
		\left|\mathcal{T}\chi(w_{1})-\mathcal{T}\chi(w_{2})\right| & \lesssim  \displaystyle\int_{\mathbb{T}}|\chi(\tau)|\left|K_{0}\left(\lambda|\Phi_{j}(w_{1})-\Phi_{i}(\tau)|\right)-K_{0}\left(|\lambda||\Phi_{j}(w_{2})-\Phi_{i}(\tau)|\right)\right||d\tau|\\
		& \lesssim  \|\chi\|_{L^{\infty}(\mathbb{T})}\left|\Phi_{j}(w_{1})-\Phi_{j}(w_{2})\right|.
	\end{align*}
	Using that $\Phi_{j}\in C^{1+\alpha}(\mathbb{T})\hookrightarrow C^{\alpha}(\mathbb{T}),$ we conclude that
	$$\left|\mathcal{T}\chi(w_{1})-\mathcal{T}\chi(w_{2})\right|\lesssim\|\chi\|_{L^{\infty}(\mathbb{T})}\|\Phi_{j}\|_{C^{\alpha}(\mathbb{T})}|w_{1}-w_{2}|^{\alpha}.$$
	We deduce that
	\begin{equation}\label{estimate for mathcalT}
		\|\mathcal{T}\chi\|_{C^{\alpha}(\mathbb{T})}\lesssim\left(1+\|\Phi_{j}\|_{C^{\alpha}(\mathbb{T})}\right)\|\chi\|_{L^{\infty}(\mathbb{T})}.
	\end{equation}
	Applying this with $\chi=\Phi_{j}'$, we find 
	$$\| S(\lambda,\Phi_{i},\Phi_{j})\|_{C^{\alpha}(\mathbb{T})}\lesssim\left(1+\|\Phi_{j}\|_{C^{\alpha}(\mathbb{T})}\right)\|\Phi_{i}'\|_{L^{\infty}(\mathbb{T})}\lesssim\left(1+\|\Phi_{j}\|_{C^{1+\alpha}(\mathbb{T})}\right)\|\Phi_{i}\|_{C^{1+\alpha}(\mathbb{T})}<\infty.$$
	The last point to check is that the Fourier coefficients of $\mathcal{I}_{j}(\lambda,f_{1},f_{2})$ are real. According to the definition of the space $X^{1+\alpha}$, the mapping $\Phi_{j}$ has real coefficients. We deduce that the Fourier coefficients of $\Phi_{j}'$ are also real. Due to the stability of such property under conjugation and multiplication, we only have to prove that the Fourier coefficients of $S(\lambda,\Phi_{i},\Phi_{j})$ are real. This is checked by the following computations. By using \eqref{symmetry Bessel} and the change of variables $\eta\mapsto-\eta$, one has
	\begin{align*}
		\overline{S(\lambda,\Phi_{i},\Phi_{j})(w)} & =  \displaystyle\overline{\fint_{\mathbb{T}}\Phi_{i}'(\tau)K_{0}\left(\lambda|\Phi_{j}(w)-\Phi_{i}(\tau)|\right)d\tau}\\
		& =  \displaystyle\overline{\frac{1}{2\ii\pi}\int_{0}^{2\pi}\Phi_{i}'\left(e^{\ii\eta}\right)K_{0}\left(\lambda|\Phi_{j}(w)-\Phi_{i}\left(e^{\ii\eta}\right))|\right)\ii e^{\ii\eta}d\eta}\\
		&= \displaystyle\frac{1}{2\pi}\int_{0}^{2\pi}\Phi_{i}'\left(e^{-\ii\eta}\right)K_{0}\left(\lambda|\Phi_{j}(\overline{w})-\Phi_{i}\left(e^{-\ii\eta}\right)|\right)e^{-\ii\eta}d\eta\\
		& = \displaystyle\frac{1}{2\ii\pi}\int_{0}^{2\pi}\Phi_{i}'\left(e^{\ii\eta}\right)K_{0}\left(\lambda|\Phi_{j}(\overline{w})-\Phi_{i}\left(e^{\ii\eta}\right)|\right)\ii e^{\ii\eta}d\eta\\
		& =  \displaystyle\fint_{\mathbb{T}}\Phi_{i}'(\tau)K_{0}\left(\lambda|\Phi_{j}(\overline{w})-\Phi_{i}(\tau)|\right)d\tau\\
		& =  S(\lambda,\Phi_{i},\Phi_{j})(\overline{w}). 
	\end{align*}
	
	\noindent $\blacktriangleright$ \textbf{Step 2 : Show the existence and compute the Gâteaux derivatives of $G(\lambda,b,\cdot,\cdot,\cdot)$ :}\\
	\ding{226} The Gâteaux derivative of $\mathcal{I}_{j}$ at $(f_{1},f_{2})$ in the direction $h=(h_{1},h_{2})\in X^{1+\alpha}$ is given by 
	\begin{align}\label{cv DNj}
		D\mathcal{I}_{j}(\lambda,b,f_{1},f_{2})h & =  D_{f_{1}}\mathcal{I}_{j}(\lambda,b,f_{1},f_{2})h_{1}+D_{f_{2}}\mathcal{I}_{j}(\lambda,b,f_{1},f_{2})h_{2}\nonumber\\
		& :=  \displaystyle\lim_{t\rightarrow 0}\frac{1}{t}\left[\mathcal{I}_{j}(\lambda,b,f_{1}+th_{1},f_{2})-\mathcal{I}_{j}(\lambda,b,f_{1},f_{2})\right]\nonumber\\
		&\quad +\lim_{t\rightarrow 0}\frac{1}{t}\left[\mathcal{I}_{j}(\lambda,b,f_{1},f_{2}+th_{2})-\mathcal{I}_{j}(\lambda,b,f_{1},f_{2})\right].
	\end{align}
	The previous limits are understood in the sens of the strong topology of $Y^{\alpha}.$ As a consequence, we need to to prove first the pointwise existence of these limits and then we shall check that these limits exist in the strong topology of $C^{\alpha}(\mathbb{T}).$ To be able to compute the Gâteaux dérivatives, we have to precise that since the beginning of this study we have identified $\mathbb{C}$ with $\mathbb{R}^{2}.$ Hence $\mathbb{C}$ is naturally endowed with the Euclidean scalar product which writes for $z_{1}=a_{1}+\ii b_{1}$ and $z_{2}=a_{2}+\ii b_{2}$ 
	$$\langle z_{1},z_{2}\rangle:=\mbox{Re}(\overline{z_{1}}z_{2})=\frac{1}{2}\left(\overline{z_{1}}z_{2}+z_{1}\overline{z_{2}}\right)=a_{1}a_{2}+b_{1}b_{2}.$$
	By straightforward computations, we infer 
	\begin{align}\label{DfjNj general}
		D_{f_{j}}\mathcal{I}_{j}(\lambda,b,f_{1},f_{2})h_{j}(w) & =  \displaystyle(-1)^{j-1}\mbox{Im}\left\lbrace\overline{w}\overline{h_{j}'(w)}S(\lambda,\Phi_{i},\Phi_{j})(w)\right.\nonumber\\
		& \quad\left.\displaystyle+\frac{\lambda}{2}\overline{w}\overline{\Phi_{j}'(w)}\left(\overline{h_{j}(w)}A(\lambda,\Phi_{i},\Phi_{j})(w)+h_{j}(w)B(\lambda,\Phi_{i},\Phi_{j})(w)\right)\right\rbrace,
	\end{align}
	where
	$$A(\lambda,\Phi_{i},\Phi_{j})(w):=\fint_{\mathbb{T}}\Phi_{i}'(\tau)K_{0}'\left(\lambda|\Phi_{j}(w)-\Phi_{i}(\tau)|\right)\frac{\Phi_{j}(w)-\Phi_{i}(\tau)}{|\Phi_{j}(w)-\Phi_{i}(\tau)|}d\tau:=\fint_{\mathbb{T}}\Phi_{i}'(\tau)K(\lambda,w,\tau)d\tau$$
	and
	$$B(\lambda,\Phi_{i},\Phi_{j})(w):=\fint_{\mathbb{T}}\Phi_{i}'(\tau)K_{0}'\left(\lambda|\Phi_{j}(w)-\Phi_{i}(\tau)|\right)\frac{\Phi_{j}(\overline{w})-\Phi_{i}(\overline{\tau})}{|\Phi_{j}(w)-\Phi_{i}(\tau)|}d\tau=\fint_{\mathbb{T}}\Phi_{i}'(\tau)\overline{K(\lambda,w,\tau)}d\tau.$$
	Since $B$ differs from $A$ only with a conjugation, then, they both satisfy the same estimates in the coming analysis. For all $w\in\mathbb{T},$ we have 
	$$\left|A(\lambda,\Phi_{i},\Phi_{j})(w)\right|\lesssim\int_{\mathbb{T}}|\Phi_{i}'(\tau)|K_{0}\left(\lambda|\Phi_{j}(w)-\Phi_{i}(\tau)|\right)|d\tau|\lesssim\|\Phi_{i}'\|_{L^{\infty}(\mathbb{T})}.$$
	So $$\| A(\lambda,\Phi_{i},\Phi_{j})\|_{L^{\infty}(\mathbb{T})}\lesssim\|\Phi_{i}'\|_{L^{\infty}(\mathbb{T})}.$$
	Let $w_{1}\neq w_{2}\in\mathbb{T}.$ let $\tau\in\mathbb{T}.$ Then
	\begin{align*}
		&\left|K(\lambda,w_{1},\tau)-K(\lambda,w_{2},\tau)\right|\\
		&=\displaystyle\left|K_{0}'\left(\lambda|\Phi_{j}(w_{1})-\Phi_{i}(\tau)|\right)\frac{\Phi_{j}(w_{1})-\Phi_{i}(\tau)}{|\Phi_{j}(w_{1})-\Phi_{i}(\tau)|}-K_{0}'\left(\lambda|\Phi_{j}(w_{2})-\Phi_{i}(\tau)|\right)\frac{\Phi_{j}(w_{2})-\Phi_{i}(\tau)}{|\Phi_{j}(w_{2})-\Phi_{i}(\tau)|}\right|\\
		&\displaystyle\leqslant\left|K_{0}'\left(\lambda|\Phi_{j}(w_{1})-\Phi_{i}(\tau)|\right)-K_{0}'\left(\lambda|\Phi_{j}(w_{2})-\Phi_{i}(\tau)|\right)\right|\\
		&\displaystyle\quad+\left|K_{0}'\left(\lambda|\Phi_{j}(w_{2})-\Phi_{i}(\tau)|\right)\right|\left|\frac{\Phi_{j}(w_{1})-\Phi_{i}(\tau)}{|\Phi_{j}(w_{1})-\Phi_{i}(\tau)|}-\frac{\Phi_{j}(w_{2})-\Phi_{i}(\tau)}{|\Phi_{j}(w_{2})-\Phi_{i}(\tau)|}\right|.
	\end{align*}
	But by right and left triangle inequalities, we get 
	\begin{align*}
		&\left|\frac{\Phi_{j}(w_{1})-\Phi_{i}(\tau)}{|\Phi_{j}(w_{1})-\Phi_{i}(\tau)|}-\frac{\Phi_{j}(w_{2})-\Phi_{i}(\tau)}{|\Phi_{j}(w_{2})-\Phi_{i}(\tau)|}\right|\\
		& =  \left|\frac{\Phi_{j}(w_{1})-\Phi_{j}(w_{2})}{\left|\Phi_{j}(w_{1})-\Phi_{i}(\tau)\right|}+\left(\Phi_{j}(w_{2})-\Phi_{i}(\tau)\right)\left(\frac{1}{\left|\Phi_{j}(w_{1})-\Phi_{i}(\tau)\right|}-\frac{1}{\left|\Phi_{j}(w_{2})-\Phi_{i}(\tau)\right|}\right)\right|\\
		& \leqslant  \frac{\left|\Phi_{j}(w_{1})-\Phi_{j}(w_{2})\right|}{\left|\Phi_{j}(w_{1})-\Phi_{i}(\tau)\right|}+\left|\Phi_{j}(w_{2})-\Phi_{i}(\tau)\right|\frac{\left|\left|\Phi_{j}(w_{2})-\Phi_{i}(\tau)\right|-\left|\Phi_{j}(w_{1})-\Phi_{i}(\tau)\right|\right|}{\left|\Phi_{j}(w_{1})-\Phi_{i}(\tau)\right|\left|\Phi_{j}(w_{2})-\Phi_{i}(\tau)\right|}\\
		& \leqslant  \frac{2\left|\Phi_{j}(w_{1})-\Phi_{j}(w_{2})\right|}{\left|\Phi_{j}(w_{1})-\Phi_{i}(\tau)\right|}\\
		& \lesssim  \left|\Phi_{j}(w_{1})-\Phi_{j}(w_{2})\right|.
	\end{align*}
	Hence,
	$$\left|K(\lambda,w_{1},\tau)-K(\lambda,w_{2},\tau)\right|\lesssim\left|\Phi_{j}(w_{1})-\Phi_{j}(w_{2})\right|\lesssim\|\Phi_{j}\|_{C^{\alpha}(\mathbb{T})}|w_{1}-w_{2}|^{\alpha}.$$
	Thus,
	$$\| A(\lambda,\Phi_{i},\Phi_{j})\|_{C^{\alpha}(\mathbb{T})}\lesssim\|\Phi_{i}\|_{C^{1+\alpha}(\mathbb{T})}+\|\Phi_{j}\|_{C^{1+\alpha}(\mathbb{T})}.$$
	We conclude that,
	$$\| D_{f_{j}}\mathcal{I}_{j}(\lambda,f_{1},f_{2})h_{j}\|_{C^{\alpha}(\mathbb{T})}\lesssim\| h_{j}\|_{C^{1+\alpha}(\mathbb{T})},$$
	which means that $D_{f_{j}}\mathcal{I}_{j}(\lambda,b,f_{1},f_{2})\in\mathcal{L}(C^{1+\alpha}(\mathbb{T}),C^{\alpha}(\mathbb{T})).$\\
	\ding{226} Concerning the other differentiation, we have  
	\begin{align}\label{DfiNj general}
		D_{f_{i}}\mathcal{I}_{j}&(\lambda,b,f_{1},f_{2})h_{i}(w)  =  \displaystyle(-1)^{j-1}\mbox{Im}\left\lbrace\overline{w}\overline{\Phi_{j}'(w)}\fint_{\mathbb{T}}h_{i}'(\tau)K_{0}\left(\lambda|\Phi_{j}(w)-\Phi_{i}(\tau)|\right)d\tau\right.\nonumber\\
		&  \displaystyle-\frac{\lambda}{2}\overline{w}\overline{\Phi_{j}'(w)}\fint_{\mathbb{T}}h_{i}(\tau)\Phi_{i}'(\tau)K_{0}'\left(\lambda|\Phi_{j}(w)-\Phi_{i}(\tau)|\right)\frac{\Phi_{j}(\overline{w})-\Phi_{i}(\overline{\tau})}{|\Phi_{j}(w)-\Phi_{i}(\tau)|}d\tau\nonumber\\
		&  \displaystyle\left.-\frac{\lambda}{2}\overline{w}\overline{\Phi_{j}'(w)}\fint_{\mathbb{T}}\overline{h_{i}(\tau)}\Phi_{i}'(\tau)K_{0}'\left(\lambda|\Phi_{j}(w)-\Phi_{i}(\tau)|\right)\frac{\Phi_{j}(w)-\Phi_{i}(\tau)}{|\Phi_{j}(w)-\Phi_{i}(\tau)|}d\tau\right\rbrace\nonumber\\
		& :=  (-1)^{j-1}\mbox{Im}\Big\{\overline{w}\overline{\Phi_{j}'(w)}\big[C(\lambda,\Phi_{i},\Phi_{j})(h_{i})(w)+D(\lambda,\Phi_{i},\Phi_{j})(h_{i})(w)+E(\lambda,\Phi_{i},\Phi_{j})(h_{i})(w)\big]\Big\}.
	\end{align}
	Using the algebra structure of $C^{\alpha}(\mathbb{T})$, we obtain 
	$$\| D_{f_{i}}\mathcal{I}_{j}(\lambda,b,f_{1},f_{2})h_{i}\|_{C^{\alpha}(\mathbb{T})}\lesssim\| C(\lambda,\Phi_{i},\Phi_{j})h_{i}\|_{C^{\alpha}(\mathbb{T})}+\| D(\lambda,\Phi_{i},\Phi_{j})h_{i}\|_{C^{\alpha}(\mathbb{T})}+\| E(\lambda,\Phi_{i},\Phi_{j})h_{i}\|_{C^{\alpha}(\mathbb{T})}.$$
	From \eqref{estimate for mathcalT}, we find 
	$$\| C(\lambda,\Phi_{i},\Phi_{j})h_{i}\|_{C^{\alpha}(\mathbb{T})}\lesssim\| h_{i}'\|_{L^{\infty}(\mathbb{T})}\leqslant\| h_{i}\|_{C^{1+\alpha}(\mathbb{T})}.$$
	In the same way as for $A(\lambda,\Phi_{i},\Phi_{j})$, we infer 
	$$\| D(\lambda,\Phi_{i},\Phi_{j})h_{i}\|_{C^{\alpha}(\mathbb{T})}+\| E(\lambda,\Phi_{i},\Phi_{j})h_{i}\|_{C^{\alpha}(\mathbb{T})}\lesssim\| h_{i}\|_{L^{\infty}(\mathbb{T})}\leqslant\| h_{i}\|_{C^{1+\alpha}(\mathbb{T})}.$$
	Gathering the foregoing computations leads to
	$$\| D_{f_{i}}\mathcal{I}_{j}(\lambda,b,f_{1},f_{2})h_{i}\|_{C^{\alpha}(\mathbb{T})}\lesssim\| h_{i}\|_{C^{1+\alpha}(\mathbb{T})},$$
	that is, $D_{f_{i}}\mathcal{I}_{j}(\lambda,b,f_{1},f_{2})\in\mathcal{L}(C^{1+\alpha}(\mathbb{T}),C^{\alpha}(\mathbb{T})).$\\
	\ding{226} The last thing to check is that the convergence in \eqref{cv DNj} occurs in the strong topology of $C^{\alpha}(\mathbb{T}).$ Since there are many terms involved, we shall select the more complicated one and study it. The other terms can be treated in a similar way, up to slight modifications. Let us focus on the first term of the right-hand side of \eqref{DfjNj general}. We shall prove,
	$$\lim_{t\rightarrow 0}S(\lambda,\Phi_{i},\Phi_{i}+th_{j})-S(\lambda,\Phi_{i},\Phi_{j})=0\quad\mbox{in}\quad C^{\alpha}(\mathbb{T}).$$
	For more convenience, we use the following notation
	$$T_{ij}(\lambda,t,w):=S(\lambda,\Phi_{i},\Phi_{i}+th_{j})(w)-S(\lambda,\Phi_{i},\Phi_{j})(w).$$
	Consider $t>0$ such that $t\| h_{j}\|_{L^{\infty}(\mathbb{T})}<r.$ According to \eqref{def S}, we get 
	\begin{align*}
		T_{ij}(\lambda,t,w) & =  \displaystyle\fint_{\mathbb{T}}\Phi_{i}'(\tau)\left(K_{0}\left(\lambda\left|\Phi_{j}(w)-\Phi_{i}(\tau)+th_{j}(w)\right|\right)-K_{0}\left(\lambda\left|\Phi_{j}(w)-\Phi_{i}(\tau)\right|\right)\right)d\tau\\
		& :=  \displaystyle\fint_{\mathbb{T}}\Phi_{i}'(\tau)\mathbb{K}(\lambda,t,w,\tau)d\tau.
	\end{align*}
	Applying mean value Theorem and left triangle inequality, we obtain
	$$\left|\mathbb{K}(\lambda,t,w,\tau)\right|\lesssim t\| h_{j}\|_{L^{\infty}(\mathbb{T})}.$$
	Consequently,
	$$\left|T_{ij}(\lambda,t,w)\right|\lesssim t\| h_{j}\|_{L^{\infty}(\mathbb{T})}.$$
	This implies that 
	$$\lim_{t\rightarrow 0}\| T_{ij}(\lambda,t,\cdot)\|_{L^{\infty}(\mathbb{T})}=0.$$
	Let us now consider $w_{1}\neq w_{2}\in\mathbb{T}.$ In view of the mean value Theorem, one obtains the following estimate
	\begin{align}\label{lip Tij}
		\left|T_{ij}(\lambda,t,w_{1})-T_{ij}(\lambda,t,w_{2})\right| & \lesssim  \displaystyle\int_{\mathbb{T}}\left|\mathbb{K}(\lambda,t,w_{1},\tau)-\mathbb{K}(\lambda,t,w_{2},\tau)\right||d\tau|\nonumber\\
		& \lesssim  \displaystyle|w_{1}-w_{2}|\int_{\mathbb{T}}\sup_{w\in\mathbb{T}}\left|\partial_{w}\mathbb{K}(\lambda,t,w,\tau)\right||d\tau|.
	\end{align}
	Now remark that we can write
	$$\mathbb{K}(\lambda,t,w,\tau)=\int_{0}^{t}\partial_{s}g(\lambda,s,w,\tau)ds\quad\mbox{with}\quad g(\lambda,t,w,\tau):=K_{0}\left(\lambda\left|\Phi_{j}(w)-\Phi_{i}(\tau)+\tau h_{j}(w)\right|\right).$$
	According to \eqref{formula derivative conjugate}, one obtains
	\begin{align*}
		\partial_{w}&g(\lambda,t,w,\tau)=\frac{\lambda}{2} K_{0}'\left(\lambda\left|\Phi_{j}(w)-\Phi_{i}(\tau)+th_{j}(w)\right|\right)\\
		&\times\frac{\left(\Phi_{j}'(w)+th_{j}'(w)\right)\left(\overline{\Phi_{j}(w)}-\overline{\Phi_{i}(\tau)}+t\overline{h_{j}(w)}\right)-\overline{w}^{2}\left(\overline{\Phi_{j}'(w)}+t\overline{h_{j}'(w)}\right)\left(\Phi_{j}(w)-\Phi_{i}(\tau)+th_{j}(w)\right)}{\left|\Phi_{j}(w)-\Phi_{i}(\tau)+th_{j}(w)\right|}.
	\end{align*}
	After straightforward computations, we obtain for $s\in[0,t]$,
	$$\left|\partial_{s}\partial_{w}g(\lambda,s,w,\tau)\right|\lesssim 1.$$
	As a consequence, we infer
	$$\left|\partial_{w}\mathbb{K}(\lambda,t,w,\tau)\right|\lesssim |t|.$$
	Coming back to \eqref{lip Tij} and using the fact that $\alpha\in(0,1)$, we conclude
	$$\left|T_{ij}(\lambda,t,w_{1})-T_{ij}(\lambda,t,w_{2})\right|\lesssim |t||w_{1}-w_{2}|\lesssim |t||w_{1}-w_{2}|^{\alpha}.$$
	Therefore,
	$$\lim_{t\rightarrow 0}\| T_{ij}(t,\cdot)\|_{C^{\alpha}(\mathbb{T})}=0.$$
	The second step is now achieved.\\
	
	\noindent $\blacktriangleright$ \textbf{Step 3 : Show that the Gâteaux derivatives of $G(\lambda,b,\cdot,\cdot,\cdot)$ are continuous :}\\
	Now we investigate for the continuity of the Gâteaux derivatives seen as operators from the neighborhood $B_{r}^{1+\alpha}\times B_{r}^{1+\alpha}$ into the Banach space $\mathcal{L}\left(X_{1}^{1+\alpha},Y_{1}^{\alpha}\right).$ Using the algebra structure of $C^{\alpha}(\mathbb{T})$, we deduce from \eqref{DfiNj general} and \eqref{DfjNj general} that we only have to study the continuity of the terms $S(\lambda,\Phi_{i},\Phi_{j})$, $A(\lambda,\Phi_{i},\Phi_{j})$, $B(\lambda,\Phi_{i},\Phi_{j})$, $C(\lambda,\Phi_{i},\Phi_{j})h_{i}$, $D(\lambda,\Phi_{i},\Phi_{j})h_{i}$ and $E(\lambda,\Phi_{i},\Phi_{j})h_{i}.$ As before, we shall focus on the term $S(\lambda,\Phi_{i},\Phi_{j})$ for $i\neq j$ and remark that the other terms are similar. We denote
	$$\Phi_{1}:=\textnormal{Id}+f_{1},\quad\Psi_{1}:=\textnormal{Id}+g_{1},\quad\Phi_{2}:=b\textnormal{Id}+f_{2},\quad\Psi_{2}:=b\textnormal{Id}+g_{2},$$
	with $(f_{1},f_{2})\in B_{r}^{1+\alpha}\times B_{r}^{1+\alpha}$ and $(g_{1},g_{2})\in B_{r}^{1+\alpha}\times B_{r}^{1+\alpha}.$ Let us show that 
	$$\| S(\lambda,\Phi_{i},\Phi_{j})-S(\lambda,\Psi_{i},\Psi_{j})\|_{C^{\alpha}(\mathbb{T})}\lesssim\| f_{1}-g_{1}\|_{C^{1+\alpha}(\mathbb{T})}+\| f_{2}-g_{2}\|_{C^{1+\alpha}(\mathbb{T})}.$$
	According to \eqref{def S}, we get 
	\begin{align*}
		S(\lambda,\Phi_{i},\Phi_{j})(w)-S(\lambda,\Psi_{i},\Psi_{j})(w) & =  \displaystyle\fint_{\mathbb{T}}\left[\Phi_{i}'(\tau)K_{0}\left(\lambda\left|\Phi_{j}(w)-\Phi_{i}(\tau)\right|\right)-\Psi_{i}'(\tau)K_{0}\left(\lambda\left|\Psi_{j}(w)-\Psi_{i}(\tau)\right|\right)\right]d\tau\\
		& :=  \displaystyle\fint_{\mathbb{T}}\Psi_{i}'(\tau)\mathbb{K}_{2}(\lambda,w,\tau)d\tau+\fint_{\mathbb{T}}\left(\Phi_{i}'(\tau)-\Psi_{i}'(\tau)\right)K_{0}\left(\lambda\left|\Phi_{j}(w)-\Phi_{i}(\tau)\right|\right)d\tau,
	\end{align*}
	where
	$$\mathbb{K}_{2}(\lambda,w,\tau):=K_{0}\left(\lambda\left|\Phi_{j}(w)-\Phi_{i}(\tau)\right|\right)-K_{0}\left(\lambda\left|\Psi_{j}(w)-\Psi_{i}(\tau)\right|\right).$$
	We have directly
	$$\Big\|\fint_{\mathbb{T}}\left(\Phi_{i}'(\tau)-\Psi_{i}'(\tau)\right)K_{0}\left(\lambda\left|\Phi_{j}(\cdot)-\Phi_{i}(\tau)\right|\right)d\tau\Big\|_{C^{\alpha}(\mathbb{T})}\lesssim\| f_{i}'-g_{i}'\|_{L^{\infty}(\mathbb{T})}\leqslant\| f_{i}-g_{i}\|_{C^{1+\alpha}(\mathbb{T})}.$$
	Now set
	$$L_{i}(\lambda,w):=\fint_{\mathbb{T}}\mathbb{K}_{2}(\lambda,w,\tau)\Psi_{i}'(\tau)d\tau,$$
	By a new use of the mean value Theorem and left triangle inequality, we obtain 
	\begin{align*}
		|\mathbb{K}_{2}(\lambda,w,\tau)| & \lesssim  \big|\left|\Phi_{j}(w)-\Phi_{i}(\tau)\right|-\left|\Psi_{j}(w)-\Psi_{i}(\tau)\right|\big|\\
		& \leqslant  \left|\Phi_{j}(w)-\Psi_{j}(w)\right|+\left|\Phi_{i}(\tau)-\Psi_{i}(\tau)\right|\\
		& \leqslant  \|\Psi_{j}-\Phi_{j}\|_{L^{\infty}(\mathbb{T})}+\|\Psi_{i}-\Phi_{i}\|_{L^{\infty}(\mathbb{T})}.
	\end{align*}
	Hence, we deduce 
	\begin{align*}
		\| L_{i}(\lambda,\cdot)\|_{L^{\infty}(\mathbb{T})} & \lesssim  \|\Psi_{i}'\|_{L^{\infty}(\mathbb{T})}\left(\|\Psi_{j}-\Phi_{j}\|_{L^{\infty}(\mathbb{T})}+\|\Psi_{i}-\Phi_{i}\|_{L^{\infty}(\mathbb{T})}\right)\\
		& \lesssim  \| f_{j}-g_{j}\|_{C^{1+\alpha}(\mathbb{T})}+\| f_{i}-g_{i}\|_{C^{1+\alpha}(\mathbb{T})}.
	\end{align*}
	Take $w_{1}\neq w_{2}\in\mathbb{T}.$ Applying the mean value Theorem yields
	$$\left|L_{i}(\lambda,w_{1})-L_{i}(\lambda,w_{2})\right|\lesssim|w_{1}-w_{2}|\fint_{\mathbb{T}}\sup_{w\in\mathbb{T}}\left|\partial_{w}\mathbb{K}_{2}(\lambda,w,\tau)\right||d\tau|.$$
	By \eqref{formula derivative conjugate}, we have 
	$$\partial_{w}\mathbb{K}_{2}(\lambda,w,\tau)=\frac{\lambda}{2}\left(\overline{\mathcal{J}(\lambda,w,\tau)}-\overline{w}^{2}\mathcal{J}(\lambda,w,\tau)\right),$$
	where
	$$\mathcal{J}(\lambda,w,\tau):=\overline{\Phi_{j}'(w)}(\Phi_{j}(w)-\Phi_{i}(\tau))K_{0}'\left(\lambda|\Phi_{j}(w)-\Phi_{i}(\tau)|\right)-\overline{\Psi_{j}'(w)}(\Psi_{j}(w)-\Psi_{i}(\tau))K_{0}'\left(\lambda|\Psi_{j}(w)-\Psi_{i}(\tau)|\right).$$
	Notice that it can be written in the following form
	$$\mathcal{J}(\lambda,w,\tau)=\mathcal{J}_{1}(\lambda,w,\tau)+\mathcal{J}_{2}(\lambda,w,\tau)+\mathcal{J}_{3}(\lambda,w,\tau),$$
	with 
	\begin{align*}
		\mathcal{J}_{1}(\lambda,w,\tau)&:=\overline{\Phi_{j}'(w)}\left[(\Phi_{j}-\Psi_{j})(w)-(\Phi_{i}-\Psi_{i})(\tau)\right]K_{0}'\left(\lambda|\Phi_{j}(w)-\Phi_{i}(\tau)|\right),\\
		\mathcal{J}_{2}(\lambda,w,\tau)&:=\left[\overline{\Phi_{j}'(w)}-\overline{\Psi_{j}'(w)}\right]\left[\Psi_{j}(w)-\Psi_{i}(\tau)\right]K_{0}'\left(\lambda|\Psi_{j}(w)-\Psi_{i}(\tau)|\right),\\
		\mathcal{J}_{3}(\lambda,w,\tau)&:=\overline{\Phi_{j}'(w)}\left[\Psi_{j}(w)-\Psi_{i}(\tau)\right]\left[K_{0}'\left(\lambda|\Phi_{j}(w)-\Phi_{i}(\tau)|\right)-K_{0}'\left(\lambda|\Psi_{j}(w)-\Psi_{i}(\tau)|\right)\right].
	\end{align*}
	By the same techniques as already used above, we get 
	$$\|\partial_{w}\mathbb{K}_{2}(\lambda,\cdot,\tau)\|_{L^{\infty}(\mathbb{T})}\lesssim\| f_{j}-g_{j}\|_{C^{1+\alpha}(\mathbb{T})}+\| f_{i}-g_{i}\|_{C^{1+\alpha}(\mathbb{T})}.$$
	We deduce that
	$$\| S(\lambda,\Phi_{i},\Phi_{j})-S(\lambda,\Psi_{i},\Psi_{j})\|_{C^{\alpha}(\mathbb{T})}\lesssim\| f_{j}-g_{j}\|_{C^{1+\alpha}(\mathbb{T})}+\| f_{i}-g_{i}\|_{C^{1+\alpha}(\mathbb{T})}.$$
	\textbf{(ii)} Looking at Proposition \ref{proposition regularity of the functional}, it is sufficient to prove the preservation of the $\mathbf{m}$-fold symmetry. Let $r$ be as in Proposition \ref{proposition regularity of the functional}. Let $(f_{1},f_{2})\in B_{r,\mathbf{m}}^{1+\alpha}\times B_{r,\mathbf{m}}^{1+\alpha}.$ Let $\Phi_{1}$ and $\Phi_{2}$ be the associated conformal maps 
	$$\Phi_{1}(z)=z+\sum_{n=0}^{\infty}\frac{a_{n}}{z^{\mathbf{m}n-1}}\quad\mbox{ and }\quad\Phi_{2}(z)=bz+\sum_{n=0}^{\infty}\frac{b_{n}}{z^{\mathbf{m}n-1}}.$$
	One easily obtains 
	$$\forall j\in\{1,2\},\quad\forall w\in\mathbb{T},\quad\Phi_{j}\left(e^{\frac{2\ii\pi}{\mathbf{m}}}w\right)=e^{\frac{2\ii\pi}{\mathbf{m}}}\Phi_{j}(w)\quad\mbox{ and }\quad\Phi_{j}'\left(e^{\frac{2\ii\pi}{\mathbf{m}}}w\right)=\Phi_{j}(w).$$
	Hence, by using the change of variables $\tau\mapsto e^{\frac{2\ii\pi}{\mathbf{m}}}\tau$, we have for all $(i,j)\in\{1,2\}^{2}$ and for all $w\in\mathbb{T}$,
	\begin{align*}
		S(\lambda,\Phi_{i},\Phi_{j})\left(e^{\frac{2\ii\pi}{\mathbf{m}}}w\right) & =  \displaystyle\fint_{\mathbb{T}}\Phi_{i}'(\tau)K_{0}\left(\lambda\left|\Phi_{j}\left(e^{\frac{2\ii\pi}{\mathbf{m}}}w\right)-\Phi_{i}\left(\tau\right)\right|\right)d\tau\\
		& =  \displaystyle e^{\frac{2\ii\pi}{\mathbf{m}}}\fint_{\mathbb{T}}\Phi_{i}'\left(e^{\frac{2\ii\pi}{\mathbf{m}}}\tau\right)K_{0}\left(\lambda\left|\Phi_{j}\left(e^{\frac{2\ii\pi}{\mathbf{m}}}w\right)-\Phi_{i}\left(e^{\frac{2\ii\pi}{\mathbf{m}}}\tau\right)\right|\right)d\tau\\
		& =  \displaystyle e^{\frac{2\ii\pi}{\mathbf{m}}}\fint_{\mathbb{T}}\Phi_{i}'(\tau)K_{0}\left(\lambda\left|\Phi_{j}\left(w\right)-\Phi_{i}\left(\tau\right)\right|\right)d\tau\\
		& =  \displaystyle e^{\frac{2\ii\pi}{\mathbf{m}}}S(\lambda,\Phi_{i},\Phi_{j})(w).
	\end{align*}
	By definition \eqref{definition of Fj} of $G_{j}$, this immediately implies that
	$$\forall j\in\{1,2\},\quad\forall w\in\mathbb{T},\quad G_{j}\left(\lambda,b,\Omega,f_{1},f_{2}\right)\left(e^{\frac{2\ii\pi}{\mathbf{m}}}w\right)=G_{j}\left(\lambda,b,\Omega,f_{1},f_{2}\right)\left(w\right).$$
	So $$G(\lambda,b,\cdot,\cdot,\cdot):\mathbb{R}\times B_{r,\mathbf{m}}^{1+\alpha}\times B_{r,\mathbf{m}}^{1+\alpha}\rightarrow Y_{\mathbf{m}}^{\alpha}.$$
	\textbf{(iii)} Fix $j\in\{1,2\}.$ By \eqref{split GON} and \eqref{DOj}, we have for $f_{j}\in B_{r}^{1+\alpha}$ and $h_{j}\in C^{1+\alpha}(\mathbb{T}),$ 
	\begin{align*}
		\partial_{\Omega}D_{f_{j}}G_{j}(\lambda,b,\Omega,f_{j})(h_{j})(w)&=\partial_{\Omega}D_{f_{j}}\mathcal{S}_{j}(\lambda,b,\Omega,f_{j})(h_{j})(w)\\
		&=\mbox{Im}\left\lbrace h_{j}(w)\overline{w}\overline{\Phi_{j}'(w)}+\Phi_{j}(w)\overline{w}\overline{h_{j}'(w)}\right\rbrace.
	\end{align*}
	As a consequence, we deduce that for $(f_{j},g_{j})\in(B_{r}^{1+\alpha})^{2}$ and $h_{j}\in C^{1+\alpha}(\mathbb{T})$,  
	$$\Big\|\partial_{\Omega}D_{f_{j}}G_{j}(\lambda,b,\Omega,f_{j})(h_{j})-\partial_{\Omega}D_{f_{j}}G_{j}(\lambda,b,\Omega,g_{j})(h_{j})\Big\|_{C^{\alpha}(\mathbb{T})}\lesssim\| f_{j}-g_{j}\|_{C^{1+\alpha}(\mathbb{T})}\| h_{j}\|_{C^{1+\alpha}(\mathbb{T})}.$$
	This proves the continuity of $\partial_{\Omega}DG(\lambda,b,\cdot,\cdot,\cdot):\mathbb{R}\times B_{r}^{1+\alpha}\times B_{r}^{1+\alpha}\rightarrow\mathcal{L}(X^{1+\alpha},Y^{\alpha})$ and achieves the proof of Proposition \ref{proposition regularity of the functional}.
\end{proof}
\section{Crandall-Rabinowitz's Theorem}\label{appendix CR}
Now, we recall the classical Crandall-Rabinowitz's Theorem. This result was first proved in \cite{CR71} and it  is one of the most common theorems appearing in the bifurcation theory. A convenient reference in the subject is \cite{K11}. We briefly explain the core of local bifurcation theory.\\

Consider a function $F:\mathbb{R}\times X\rightarrow Y$ with $X$ and $Y$ two Banach spaces. Assume that for all $\Omega$ in a non-empty interval $I$ we have $F(\Omega,0)=0.$ This provides a line of solutions
$$\big\{(\Omega,0),\quad\Omega\in I\big\}.$$
Now take some $(\Omega_{0},0)$ with $\Omega_{0}\in I.$ The implicit function Theorem explains that if $DF(\Omega_{0},0)$ is invertible, then the line $\{(\Omega,0),|\Omega-\Omega_{0}|\leqslant \varepsilon\}$ is the only curve of solutions close to $(\Omega_{0},0)$, i.e. for $\varepsilon$ small enough. (Local) bifurcation theory is the study of situations where this is not true, that is, close to $(\Omega_{0},0)$ there exists (at least) another line of solutions. In this case, we say that $(\Omega_{0},0)$ is a bifurcation point. Crandall-Rabinowitz's Theorem gives sufficient conditions to construct a bifurcation curve and states as follows.
\begin{theo}[Crandall-Rabinowitz]\label{Crandall-Rabinowitz theorem}
	Let $X$ and $Y$ be two banach spaces. Let $V$ be a neighborhood of $0$ in $X$ and let
	$$F:\begin{array}[t]{rcl}
		\mathbb{R}\times V & \rightarrow  & Y\\
		(\Omega,x) & \mapsto & F(\Omega,x)
	\end{array}$$
	be a function of classe $C^{1}$ with the following properties 
	\begin{enumerate}[label=(\roman*)]
		\item (Trivial solution) $\forall\,\Omega\in\mathbb{R},F(\Omega,0)=0.$
		\item (Regularity) $F_{\Omega}$, $F_{x}$ and $F_{\Omega x}$ exist and are continuous.
		\item (Fredholm property) $\ker\left(\partial_{x}F(0,0)\right)=\langle x_{0}\rangle$ and $Y/R\left(\partial_{x}F(0,0)\right)$ are one dimensional and $R\left(\partial_{x}F(0,0)\right)$ is closed in $Y.$
		\item (Transversality assumption) $\partial_{\Omega}\partial_{x}F(0,0)x_{0}\not\in R\left(\partial_{x}F(0,0)\right).$
	\end{enumerate}
	If $\chi$ is any complement of $\ker\left(\partial_{x}F(0,0)\right)$ in $X$, then there exist a neighborhood $U$ of $(0,0)$, an interval $(-a,a)$ ($a>0$) and continuous functions
	$$\psi:(-a,a)\rightarrow \mathbb{R}\quad\textnormal{and}\quad\phi:(-a,a)\rightarrow\chi$$
	such that $\psi(0)=0,$ $\phi(0)=0$ and 
	$$\Big\{(\Omega,x)\in U\quad\textnormal{s.t.}\quad F(\Omega,x)=0\Big\}=\Big\{\big(\psi(s),sx_{0}+s\phi(s)\big)\quad\textnormal{s.t.}\quad|s|<a\Big\}\cup\Big\{(\Omega,0)\in U\Big\}.$$
\end{theo}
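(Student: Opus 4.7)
The plan is the classical Lyapunov--Schmidt reduction followed by a scalar implicit function argument applied to the quotient of the reduced equation by its trivial $s$-factor. Without loss of generality I take the bifurcation point to be $(0,0)$.

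First I decompose the spaces according to the Fredholm structure: write $X = \langle x_0\rangle \oplus \chi$ and fix a one-dimensional complement $Z$ of $R(\partial_x F(0,0))$ in $Y$, which exists because the range is closed of codimension one. Let $P : Y \to R(\partial_x F(0,0))$ and $Q = I - P : Y \to Z$ be the associated continuous projections. Every $x$ near $0$ is written uniquely as $x = s x_0 + y$ with $s\in\mathbb{R}$ and $y\in\chi$, so the equation $F(\Omega, x) = 0$ becomes the coupled system $PF(\Omega, sx_0 + y) = 0$ and $QF(\Omega, sx_0 + y) = 0$.

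The auxiliary equation is handled by the classical implicit function theorem applied to $\mathcal{H}(\Omega, s, y) := PF(\Omega, sx_0 + y)$. At the base point, $\mathcal{H}(0,0,0) = 0$ and $\partial_y \mathcal{H}(0,0,0) = P \circ \partial_x F(0,0)|_\chi$ is a bijection $\chi \to R(\partial_x F(0,0))$: injective because $\chi \cap \ker\partial_x F(0,0) = \{0\}$ and $R(\partial_x F(0,0)) \cap Z = \{0\}$, and surjective because every element of $R(\partial_x F(0,0))$ is the image of some $h \in X$, whose kernel component along $\langle x_0\rangle$ can be discarded. The IFT then yields $y = y(\Omega, s) \in \chi$, defined and $C^1$ near $(0,0)$, solving $\mathcal{H}(\Omega, s, y(\Omega, s)) = 0$; uniqueness combined with hypothesis (i) forces $y(\Omega, 0) \equiv 0$.

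Substituting leaves the scalar bifurcation equation $\Psi(\Omega, s) := QF(\Omega, sx_0 + y(\Omega, s)) = 0$ taking values in the one-dimensional space $Z$. Since $\Psi(\Omega, 0) \equiv 0$, the fundamental theorem of calculus factors
\[
\Psi(\Omega, s) = s \int_0^1 \partial_s \Psi(\Omega, ts)\, dt =: s\,\widetilde{\Psi}(\Omega, s),
\]
where $\widetilde{\Psi}$ is continuous and, thanks to hypothesis (ii), has a continuous partial derivative in $\Omega$. A direct computation gives $\widetilde{\Psi}(0,0) = \partial_s \Psi(0,0) = Q\,\partial_x F(0,0)\, x_0 = 0$ since $x_0 \in \ker\partial_x F(0,0)$. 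Differentiating in $\Omega$, every contribution involving $Q \circ \partial_x F(0,0)$ vanishes (the projection $Q$ annihilates the range), leaving $\partial_\Omega \widetilde{\Psi}(0,0) = Q\,\partial_\Omega \partial_x F(0,0)\, x_0$, which is non-zero precisely by the transversality hypothesis (iv). Identifying $Z \cong \mathbb{R}$, the scalar IFT applied to $\widetilde{\Psi} = 0$ produces a continuous $\psi : (-a,a) \to \mathbb{R}$ with $\psi(0) = 0$; setting $s\phi(s) := y(\psi(s), s)$ and factoring out $s$ once more via $y(\Omega, 0) \equiv 0$ yields the required continuous $\phi : (-a,a) \to \chi$. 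The main obstacle is the regularity bookkeeping: $F$ is only $C^1$, yet the reduced equation $\widetilde{\Psi}$ must inherit enough smoothness to support a scalar IFT at a degenerate point. This is exactly why hypothesis (ii) separately posits the existence and continuity of the mixed derivative $\partial_\Omega \partial_x F$; that alone suffices to make $\partial_\Omega \widetilde{\Psi}$ continuous near $(0,0)$ without assuming $F \in C^2$, and the local uniqueness clause of the statement then follows from the two successive applications of the IFT combined with the uniqueness of the decomposition $x = sx_0 + y$.
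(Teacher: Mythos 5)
The paper does not actually prove this theorem: it is recalled verbatim from Crandall--Rabinowitz \cite{CR71} (see also \cite{K11}), so there is no in-paper argument to compare against. Your Lyapunov--Schmidt reduction is the standard alternative route, and most of it is sound: the splittings $X=\langle x_0\rangle\oplus\chi$ and $Y=R(\partial_xF(0,0))\oplus Z$, the solvability of the auxiliary equation (you should invoke the open mapping theorem, using that the range is closed, to get continuity of $(\partial_xF(0,0)|_\chi)^{-1}$), the identity $y(\Omega,0)\equiv0$, the factorization $\Psi=s\,\widetilde{\Psi}$, and the description of the full zero set from the two uniqueness clauses.

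The genuine gap sits exactly at the step you flag and then assert: the existence and continuity of $\partial_\Omega\widetilde{\Psi}$ in a full neighborhood of $(0,0)$ under hypothesis (ii) alone. Differentiating $\partial_s\Psi(\Omega,\sigma)=Q\,\partial_xF(\Omega,\sigma x_0+y(\Omega,\sigma))(x_0+\partial_\sigma y(\Omega,\sigma))$ in $\Omega$ at points with $\sigma\neq0$ produces, via the chain rule, terms involving $\partial_x\partial_xF$ and $\partial_\Omega\partial_\sigma y$, neither of which is available when $F$ is only $C^1$ with continuous $F_{\Omega x}$; your remark that ``every contribution involving $Q\circ\partial_xF(0,0)$ vanishes'' is only valid at the single point $(0,0)$. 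The claim is true but must be earned: for $s\neq0$ one has $\partial_\Omega\widetilde{\Psi}(\Omega,s)=s^{-1}Q\big[\partial_\Omega F(\Omega,sx_0+y)+\partial_xF(\Omega,sx_0+y)\,\partial_\Omega y\big]$, and one shows continuity up to $s=0$ by writing $s^{-1}\partial_\Omega F(\Omega,sx_0+y)=\int_0^1F_{\Omega x}\big(\Omega,t(sx_0+y)\big)(x_0+s^{-1}y)\,dt$ (using $F_\Omega(\Omega,0)=0$ and the symmetry $\partial_x\partial_\Omega F=\partial_\Omega\partial_xF$, which itself needs justification) together with the analogous representation for $s^{-1}\partial_\Omega y$ obtained from $\partial_\Omega y=-(P\partial_xF|_\chi)^{-1}P\partial_\Omega F$. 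Alternatively --- and this is what Crandall and Rabinowitz actually do --- rescale before reducing: set $x=s(x_0+z)$ with $z\in\chi$ and apply a single implicit function theorem in the variables $(\Omega,z)\in\mathbb{R}\times\chi$ to $g(s,\Omega,z):=s^{-1}F(\Omega,s(x_0+z))$ for $s\neq0$, extended by $g(0,\Omega,z):=\partial_xF(\Omega,0)(x_0+z)$; then $\partial_{(\Omega,z)}g(0,0,0)[\mu,w]=\mu\,F_{\Omega x}(0,0)x_0+\partial_xF(0,0)w$ is an isomorphism of $\mathbb{R}\times\chi$ onto $Y$ precisely by (iii) and (iv), and its continuity requires only $F_x$ and $F_{\Omega x}$. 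A second, smaller omission: in evaluating $\partial_\Omega\widetilde{\Psi}(0,0)$ you silently drop the term $Q\,F_{\Omega x}(0,0)\partial_sy(0,0)$, which is not killed by $Q$; you need $\partial_sy(0,0)=0$, obtained by differentiating the auxiliary equation at the origin and using $\chi\cap\ker\partial_xF(0,0)=\{0\}$.
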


\end{document}